\documentclass[11pt]{article}

\textheight23.2cm
\textwidth15cm
\topmargin-2cm
\oddsidemargin0cm
\evensidemargin0cm
\sloppy

\usepackage{fancyhdr}
\usepackage{amsfonts}
\usepackage{amsmath}
\usepackage{amssymb}
\usepackage{amsthm}
\usepackage{tikz}
\usepackage{amscd}
\usepackage{amstext}
\usepackage{verbatim}

\usepackage[draft=false,colorlinks,bookmarksnumbered,linkcolor=black, citecolor=black]{hyperref}

\usepackage{oldgerm}
\usepackage{graphics}
\usepackage{graphicx}

\newlength{\fixboxwidth}
\setlength{\fixboxwidth}{\marginparwidth}
\addtolength{\fixboxwidth}{-0pt}

\newcommand*{\distr}[2]{\left\langle #1, #2 \right\rangle}              
\newcommand*{\distance}[2]{\mathrm{dist}\!\left( #1, #2 \right)}            
\newcommand*{\abs}[1]{\left| #1 \right|}                                
\newcommand*{\norm}[1]{\left\| #1 \right\|}                             
\newcommand*{\sep}{\; \vrule \;}                                        
\renewcommand{\d}{\,\mathrm{d}}											
\newcommand{\osc}{\mathrm{osc}}											
\newcommand{\loc}{\mathrm{loc}}											
\newcommand{\esssup}{ \mathop{\mathrm{ess}\text{-}\mathrm{sup}}\limits }                 

\newcommand{\re}{\mathbb{R}}\newcommand{\N}{\mathbb{N}}
\newcommand{\zz}{\mathbb{Z}}

\newcommand{\com}{\mathbb{C}}
\newcommand{\Z}{{\zz}^d}

\newcommand{\R}{{\re}^d}

\newcommand{\cs}{{\mathcal S}}

\newcommand{\cl}{{\mathcal L}}

\newcommand{\cf}{{\mathcal F}}
\newcommand{\cfi}{{\cf}^{-1}}

\newcommand{\supp}{{\rm supp \, }}


\newcommand{\be}{\begin{equation}}
\newcommand{\ee}{\end{equation}}
\newcommand{\beq}{\begin{eqnarray}}
\newcommand{\beqq}{\begin{eqnarray*}}
\newcommand{\eeq}{\end{eqnarray}}
\newcommand{\eeqq}{\end{eqnarray*}}

\newtheorem{satz}{Theorem}

\newtheorem{rem}{Remark}
\newtheorem{defi}{Definition}
\newtheorem{lem}{Lemma}
 
\newtheorem{prop}{Proposition}


\begin{document}


\title{Oscillations and Differences in\\ Triebel-Lizorkin-Morrey Spaces}
\author{Marc Hovemann\footnote{Philipps-University Marburg, Institute of Mathematics, Hans-Meerwein-Stra{\ss}e 6, 35043 Marburg, Germany. 
Email: \href{mailto:hovemann@mathematik.uni-marburg.de}{hovemann@mathematik.uni-marburg.de} } $^{,}$\thanks{Marc Hovemann has been supported by Deutsche Forschungsgemeinschaft (DFG), grant DA~360/24-1.}
\quad\ and \quad Markus Weimar\footnote{Julius-Maximilians-Universit\"at W\"urzburg (JMU), 
Institute of Mathematics, 
Emil-Fischer-Stra{\ss}e 30, 97074 W\"urzburg, Germany. 
Email: \href{mailto:markus.weimar@uni-wuerzburg.de}{markus.weimar@uni-wuerzburg.de} } $^{,}$\footnote{Corresponding author.}
}
\date{\today}

\maketitle

\noindent\textbf{Abstract:} In this paper we are concerned with Triebel-Lizorkin-Morrey spaces $\mathcal{E}^{s}_{u,p,q}(\Omega)$ of positive smoothness $s$ defined on (special or bounded) Lipschitz domains $\Omega\subset\R$ as well as on $\R$. 
For those spaces we prove new equivalent characterizations in terms of local oscillations which hold as long as some standard conditions on the parameters are fulfilled. 
As a byproduct, we also obtain novel characterizations of $\mathcal{E}^{s}_{u,p,q}(\Omega)$ using differences of higher order.
Special cases include standard Triebel-Lizorkin spaces $F^s_{p,q} (\Omega)$ and hence classical $L_p$-Sobolev spaces $H^s_p(\Omega)$.

\vspace{0,2 cm}

\noindent\textbf{Key words:} Triebel-Lizorkin-Morrey space, Morrey space, Lipschitz domain, oscillations, higher order differences

\vspace{0,2 cm}

\noindent\textbf{Mathematics Subject Classification (2010):} 46E35, 46E30, 41A30, 26B35

\section{Introduction and Main Results}

Nowadays Triebel-Lizorkin spaces $F^s_{p,q} (\R)$ are well-established tools to describe the regularity of functions and distributions beyond the classical scale of $L_p$-Sobolev spaces $H^s_p(\R)$ which are included as special cases $q=2$. They have been introduced around 1970 by Lizorkin~\cite{Liz1, Liz2} and Triebel~\cite{Tr73}. 
Later these function spaces have been investigated in detail in the books of Triebel~\cite{Tr83,Tr92,Tr06} which contain numerous other historical references. 
In recent years a growing number of authors works with further generalisations of the Triebel-Lizorkin scale defined upon \emph{Morrey spaces} instead of Lebesgue spaces $L_p$. So, Triebel-Lizorkin-Morrey spaces $ \mathcal{E}^{s}_{u,p,q}(\R) $ with $ 0 < p \leq u < \infty$, $ 0 < q \leq \infty $, $ s \in \mathbb{R} $ and Triebel-Lizorkin-type spaces $ F^{s,\tau}_{p,q}(\R) $ with $ 0 < p < \infty $, $ 0 < q \leq \infty $, $ s \in \mathbb{R} $, $ 0\leq \tau<\infty $ attracted a lot of attention. 
The spaces $ \mathcal{E}^{s}_{u,p,q}(\R) $ have been introduced by Tang and Xu in 2005, see \cite{TangXu}, while  
$F^{s,\tau}_{p,q}(\R) $ showed up for the first time in 2008 in some papers of Yang and Yuan~\cite{yy1,yy2}. 
Later on, using a different notation, the latter also appeared in~\cite{Tr14}. 
Although the spaces $ \mathcal{E}^{s}_{u,p,q}(\R) $ and  $ F^{s,\tau}_{p,q}(\R) $ are defined quite differently, they have a lot of properties in common. 
Moreover, under certain conditions on the parameters they even coincide \cite{ysy}.

When it comes to applications in the theory of quasi-linear partial differential equations (PDEs), locally weighted $L_p$ averages of derivatives (such as norms in Morrey spaces) became a standard tool from the early beginning~\cite{Mor}, see also \cite{DahDieHar+} and the references therein.
In this context it seems advantageous to deal with Triebel-Lizorkin-Morrey spaces~$\mathcal{E}^{s}_{u,p,q}(\Omega)$ defined on domains $\Omega \subseteq \mathbb{R}^d$ with preferably mild restrictions on the regularity of their boundary. 
Thereby, in what follows (mostly) we will concentrate on special or bounded \emph{Lipschitz domains} $\Omega$. 
It is one main goal of this paper to prove equivalent intrinsic characterizations of the spaces $ \mathcal{E}^{s}_{u,p,q}(\Omega) $ in terms of oscillations $\osc_{v, \Omega}^{N}f$. 
Here for a function $f \in L_v^{\loc}(\Omega)$ with $0< v \leq \infty$ its local $v$-oscillation of order $N\in\N_0$ is given by
\begin{align*}
    \osc_{v, \Omega}^{N}f(x,t) := \inf_{P \in \mathcal{P}_{N}} \Big( t^{-d} \int_{B(x,t) \cap \Omega} \abs{f(y) - P(y)}^{v} \d y \Big)^{\frac{1}{v}},
    \qquad x\in\Omega, t>0,
\end{align*}
whereby $ \mathcal{P}_{N} $ denotes the set of polynomials with degree at most $N$ and for $v = \infty$ the usual modifications have to be made. If $\Omega=\R$, we simply write $\osc_{v}^{N}f:=\osc_{v,\R}^{N}f$. 
We investigate under which conditions on the parameters $s$, $u$, $p$, $q$, $v$, $N$, and $d$ functions $f\!\in~\!\!\!\mathcal{E}^{s}_{u,p,q}(\Omega)$ can be described by using~$\osc_{v, \Omega}^{N}f$ only. 
Such characterizations allow to describe Triebel-Lizorkin-Morrey spaces in terms of the decay of bestapproximation errors w.r.t.\ polynomials illustrating their strong relation to approximation theory which is well-known for classical Sobolev and Besov spaces; cf., e.g.,~\cite{CioWei} or \cite{Tr89}.
In the long run, this can be used to derive sharp assertions on the regularity of PDE solutions as it was done, e.g., in~\cite{BalDieWei} for the original Triebel-Lizorkin spaces $F^s_{p,q}(\Omega)$.
Moreover, oscillation characterizations pave the way for local error estimators needed in the construction of powerful adaptive $hp$-finite element methods on Triebel-Lizorkin-Morrey spaces $\mathcal{E}^{s}_{u,p,q}(\Omega) $. Here we refer to~\cite{CaNoSte} for the case of classical Sobolev spaces.

For Triebel-Lizorkin spaces $F^s_{p,q}(\Omega)$, whereat $\Omega\subseteq\R$ is either $\mathbb{R}^d$ or a bounded \mbox{($C^{\infty}$-)} domain, characterizations in terms of oscillations have a long history. 
Let us refer to Dorronsoro~\cite{Dor1}, Seeger~\cite{See1}, Shvartsman~\cite{Shv06}, and Triebel~\cite{Tr89} at least. 
Much more references (also concerning Besov spaces) can be found in \cite[Section 1.7.3]{Tr92}.
One such result reads as follows.

\begin{satz}[{Triebel~\cite[Theorems 3.5.1 and 5.2.1]{Tr92}}]\label{thm_hist1}
    For $d\in\N$ let $\Omega\subseteq\R$ be either $\mathbb{R}^d$ or a bounded $C^{\infty}$-domain. 
    Let $0 < p < \infty$, $0 < q \leq \infty $, $ 1 \leq v \leq \infty$, $N \in \mathbb{N}$, and $s \in \mathbb{R}$ with
    \begin{align*}
        d \max\!\left\{ 0, \frac{1}{p} - \frac{1}{v}, \frac{1}{q} - \frac{1}{v}  \right\} < s < N. 
    \end{align*}
    Then $f \in L_{\max\{p,v\}}(\Omega)$ belongs to $F^{s}_{p,q}(\Omega)$ if and only if 
    \begin{align*}
        \norm{ f \sep F^{s}_{p,q}(\Omega)}_{\osc}^{(1,v,N)} 
        := \norm{ f \sep L_{p}(\Omega) } + \norm{ \Big ( \int_{0}^{1} \big[t^{-s} \, \osc_{v, \Omega}^{N-1}f(x,t)\big]^q \frac{\d t}{t} \Big )^{\frac{1}{q}} \sep L_{p}(\Omega)} < \infty,
    \end{align*}
    with the usual modification if $q = \infty$. 
    Moreover, $\norm{ \,\cdot \sep F^{s}_{p,q}(\Omega)}_{\osc}^{(1,v,N)}$ provides an equivalent quasi-norm. 
\end{satz}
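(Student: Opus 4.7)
The plan is to reduce to the case $\Omega=\R$ via an extension argument and then to compare local oscillations with \emph{ball means of higher-order differences}, a quantity for which an equivalent quasi-norm for $F^{s}_{p,q}(\R)$ is already on the books. Since $\Omega$ is either $\R$ itself or a bounded $C^{\infty}$-domain, a Stein/Rychkov-type bounded linear extension operator $\mathfrak{E}\colon F^{s}_{p,q}(\Omega)\to F^{s}_{p,q}(\R)$ is available, satisfying $\mathfrak{R}\mathfrak{E}=\mathrm{id}$ for the restriction $\mathfrak{R}$. Both directions of the desired equivalence then transfer from the $\R$-setting to $\Omega$, since $\osc_{v,\Omega}^{N-1}f(x,t)\le\osc_{v}^{N-1}(\mathfrak{E}f)(x,t)$ on $\Omega$, and conversely any bound on the oscillation-norm of $\mathfrak{E}f$ controls the corresponding quantity for $f$.

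For $\Omega=\R$ I would invoke the classical \emph{ball-means-of-differences} characterization of $F^{s}_{p,q}(\R)$, valid under the assumed parameter constraints, which supplies the equivalent quasi-norm
\begin{align*}
\norm{f\sep L_p(\R)}+\norm{\Big(\int_0^1 t^{-sq}\,d_{v}^{N}f(\cdot,t)^{q}\,\frac{\d t}{t}\Big)^{1/q}\sep L_p(\R)},
\end{align*}
where $d_v^{N}f(x,t):=\big(t^{-d}\int_{\abs{h}<t}\abs{\Delta_{h}^{N}f(x)}^{v}\,\d h\big)^{1/v}$ and $\Delta_h^N$ denotes the $N$-th iterated forward difference. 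The theorem then reduces to the pointwise comparison $\osc_{v}^{N-1}f(x,t)\sim d_v^{N}f(x,ct)$, where harmless dilations in~$t$ are absorbed by a change of variables in the $\int_0^1(\cdots)^q\,\d t/t$-integral.

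The key technical lemma is a Brudnyi-type estimate. For the bound $\osc_{v}^{N-1}f\lesssim d_v^{N}f$ one picks $P_B\in\mathcal{P}_{N-1}$ to be an averaged Taylor polynomial over $B=B(x,ct)$ (equivalently, an $L_v(B)$-best approximant) and verifies
\begin{align*}
\Big(t^{-d}\int_{B(x,t)}\abs{f(y)-P_B(y)}^{v}\,\d y\Big)^{1/v}\;\le\;c\,d_v^{N}f(x,ct).
\end{align*}
Conversely, since $\Delta_{h}^{N}$ annihilates every $P\in\mathcal{P}_{N-1}$, writing $\Delta_h^N f=\Delta_h^N(f-P_B)$ and applying the (quasi-)triangle inequality yields $d_v^{N}f(x,t)\lesssim\osc_{v}^{N-1}f(x,ct)$ after averaging $y$ and $h$ over suitably nested balls.

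The hard part is transferring these pointwise ball-mean estimates to the $L_p(L_q(\d t/t))$-level, which is exactly where the restriction $d\max\{0,1/p-1/v,1/q-1/v\}<s$ is genuinely used. When $v<\min\{p,q\}$ the raw comparison is not enough: one first dominates $\osc_{v,\Omega}^{N-1}f$ by an associated Peetre-type maximal version of itself and then invokes the Fefferman-Stein vector-valued maximal inequality in $L_{p/v}(\ell_{q/v})$. The lower bound on $s$ guarantees both finiteness of the maximal function and admissibility of the inequality, while the upper bound $s<N$ ensures that $N$-th order differences are sharp enough to resolve smoothness of order $s$. Combining these ingredients with the extension step completes the proof of the equivalence on~$\Omega$.
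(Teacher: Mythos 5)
You should first note that the paper contains no proof of this statement: \autoref{thm_hist1} is quoted verbatim from Triebel \cite[Theorems 3.5.1 and 5.2.1]{Tr92} as background, so the only internal comparison available is with the way the paper proves its own generalizations (\autoref{prop:Omega_lower}--\autoref{prop:boundedLip_osc_RTv}). Measured against either Triebel's argument or the paper's, your sketch has two genuine gaps.

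First, the reduction to $\R$ via a bounded extension operator $\mathfrak{E}\colon F^{s}_{p,q}(\Omega)\to F^{s}_{p,q}(\R)$ is circular in the sufficiency direction. To show that finiteness of the intrinsic oscillation norm of $f$ implies $f\in F^{s}_{p,q}(\Omega)$, you may not apply $\mathfrak{E}$ to $f$: the operator is only defined and bounded on $F^{s}_{p,q}(\Omega)$, membership in which is exactly what is to be proved. What is needed is either an extension whose oscillation quasi-norm on $\R$ is controlled by the \emph{intrinsic} oscillation quasi-norm on $\Omega$ (a Whitney-type construction), or an intrinsic local-means argument on the domain; the latter is the route the paper takes in \autoref{prop:diff_special} and \autoref{prop:osc_special} (Rychkov--Triebel kernels supported in a reflected cone, followed by localization for bounded domains in \autoref{prop:boundedLip_osc_RTv}). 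Your extension step only yields the easy necessity direction, via $\osc_{v,\Omega}^{N-1}f(x,t)\le\osc_{v}^{N-1}(\mathfrak{E}f)(x,t)$.

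Second, the asserted pointwise equivalence $\osc_{v}^{N-1}f(x,t)\sim d_v^{N}f(x,ct)$ fails in the direction $d_v^{N}\lesssim\osc$. Expanding $\Delta_h^N(f-P)(x)=\sum_{k=0}^N(-1)^{N-k}\binom{N}{k}(f-P)(x+kh)$, the terms with $k\ge1$ do average (in $h$) to oscillation-type quantities, but the $k=0$ term contributes the pointwise value $\abs{f(x)-P(x)}$, which no single local oscillation controls. Removing it requires the telescoping argument with quasi-optimal projections and the Lebesgue-point property, giving $\abs{f(x)-\big(\Pi^{N-1}_{x,t}f\big)(x)}\lesssim\sum_{\ell\ge0}\osc^{N-1}_{1,\Omega}f(x,2^{-\ell}t)$ and then summability of the resulting series from $s>0$; this is precisely Steps~1--2 of the proof of \autoref{prop:osc_special} (together with \autoref{lem:opt_pol}), following DeVore--Sharpley. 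Similarly, the converse bound $\osc_{v}^{N-1}f(x,t)\lesssim d_v^N f(x,ct)$ holds only after a double average in $y$ \emph{and} $h$ followed by a maximal-function step, not pointwise with $h$ anchored at $x$. The remainder of your outline — the difference characterization on $\R$, the Fefferman--Stein inequality in $L_{p/v}(\ell_{q/v})$, and the role of the lower bound on $s$ — is the correct standard skeleton, but the two points above are where the actual work lies.
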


When we turn to the Triebel-Lizorkin-Morrey spaces~$\mathcal{E}^s_{u,p,q}(\Omega)$ some first equivalent descriptions in terms of local oscillations are known for the special case $\Omega = \mathbb{R}^d$; see, e.g., \cite[Sections 4.4.3 and 4.5.3]{ysy}. 
Some of those characterizations will be recalled in \autoref{re_sec_osc_Rd} below. 
Our first main result of this paper extends these assertions. It
provides new equivalent quasi-norms in terms of oscillations for the spaces $\mathcal{E}^s_{u,p,q}(\Omega)$, at which $\Omega$ is either $\mathbb{R}^d$ or a (special or bounded) Lipschitz domain. 

\begin{satz}[Oscillation Characterizations]\label{mainresult1}
    Let $0< p \leq u < \infty$, $0 < q,T,v \leq \infty$, $d,N \in \mathbb{N}$, and $0<R<\infty$. Moreover, assume that $s\in\re$ satisfies
    \begin{align}\label{cond_on_s}
		d\, \max\! \left\{ 0, \frac{1}{p} - 1, \frac{1}{q} - 1, \frac{1}{p} - \frac{1}{v}, \frac{1}{q} - \frac{1}{v} \right\} < s < N.
    \end{align}
    Then the following statements hold:
    \begin{enumerate}
        \item $\mathcal{E}^s_{u,p,q}(\R)$ is the collection of all $f \in L^{\loc}_{\max\{1,p,v\}}(\R)$ for which
        \begin{align*}
            \norm{ \bigg( \int_{B(\,\cdot\,, R)} \abs{f(y)}^v \d y \bigg)^{\frac{1}{v}} \sep \mathcal{M}^{u}_{p}(\R)}  + \norm{\bigg( \int_{0}^{T} \big[ t^{-s} \, \osc_{v}^{N-1} f(\cdot,t) \big]^{q} \frac{\d t}{t} \bigg)^{\frac{1}{q}} \sep \mathcal{M}^{u}_{p}(\R)}
        \end{align*}
        is finite (equivalent quasi-norm).
        Furthermore, the assertion remains valid when $\norm{ \big( \int_{B(\,\cdot\,, R)} \abs{f(y)}^v \d y \big)^{\frac{1}{v}} \sep \mathcal{M}^{u}_{p}(\R)}$ is replaced by $\norm{f \sep \mathcal{M}^{u}_{p}(\R)}$.

        \item In addition assume that $v\geq 1$ and let $\Omega$ be either a special or a bounded Lipschitz domain in $\R$. 
        Then $\mathcal{E}^s_{u,p,q}(\Omega)$ is the collection of all $f \in L^{\loc}_{\max\{p,v\}}(\Omega)$ for which
        \begin{align*}
            \norm{ \bigg( \int_{B(\,\cdot\,, R)\cap\Omega} \abs{f(y)}^v \d y \bigg)^{\frac{1}{v}} \sep \mathcal{M}^{u}_{p}(\Omega)} + \norm{\bigg( \int_{0}^{T} \big[ t^{-s} \, \osc_{v,\Omega}^{N-1} f(\cdot,t) \big]^{q} \frac{\d t}{t} \bigg)^{\frac{1}{q}} \sep \mathcal{M}^{u}_{p}(\Omega)}
        \end{align*}
        is finite (equivalent quasi-norm).
        If additionally $p\geq 1$, this statement remains true when $\norm{ \big( \int_{B(\,\cdot\,, R)\cap\Omega} \abs{f(y)}^v \d y \big)^{\frac{1}{v}} \sep \mathcal{M}^{u}_{p}(\Omega)}$ is replaced by $\norm{f \sep \mathcal{M}^{u}_{p}(\Omega)}$.
      \end{enumerate}
      In both cases, the usual modifications have to be made if $q=\infty$ and/or $v=\infty$.
\end{satz}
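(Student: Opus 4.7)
My overall plan is to bridge oscillations with $L_v$-averaged higher-order differences, use an intermediate difference characterization of $\mathcal{E}^s_{u,p,q}$, and then transfer the result from $\R$ to Lipschitz domains $\Omega$ via a bounded linear extension operator. The two analytic workhorses are a pointwise Whitney-type comparison between $\osc^{N-1}_v f$ and local differences, and the boundedness of Hardy--Littlewood-type maximal operators on vector-valued Morrey spaces via Fefferman--Stein inequalities.

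For part (i), I would first establish the companion characterization of $\mathcal{E}^s_{u,p,q}(\R)$ in terms of $L_v$-averaged $N$-th order differences
$d^N_v f(x,t) := \bigl(t^{-d}\int_{|h|\leq t}\abs{\Delta^N_h f(x)}^v\,\d h\bigr)^{1/v}$;
under condition \eqref{cond_on_s} this may be deduced from Peetre maximal/atomic decompositions of $\mathcal{E}^s_{u,p,q}$ together with the Morrey Fefferman--Stein theorem, and parts of it are already recalled in \autoref{re_sec_osc_Rd}. The passage from differences to oscillations then rests on two pointwise estimates. On one side, a direct Whitney inequality on $B(x,t)$ gives $\osc^{N-1}_v f(x,t) \ls d^N_v f(x,ct)$ for some fixed constant $c$. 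On the other side, any $N$-th order difference $\Delta^N_h f(x)$ can be bounded by a geometric sum of best polynomial errors on nested dyadic balls; averaging in $h$ and summing, the strict positivity of $s$ guaranteed by \eqref{cond_on_s} yields the reverse comparison of the $q$-integrals. Applying the $\mathcal{M}^u_p$-quasinorm to both sides and dealing with the low-frequency term via the Hardy--Littlewood maximal bound on $\mathcal{M}^u_p$ completes part (i), including the replacement by $\norm{f\sep\mathcal{M}^u_p(\R)}$ once the relevant $\mathcal{M}^u_p$-boundedness of the $L_v$-averaging operator is available.

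For part (ii), my strategy is extension and restriction. Since $\mathcal{E}^s_{u,p,q}(\Omega)$ is defined intrinsically by restriction, one has a bounded linear extension operator $\mathrm{Ext}\colon\mathcal{E}^s_{u,p,q}(\Omega)\to\mathcal{E}^s_{u,p,q}(\R)$ of Rychkov type, known to exist for the Morrey scale on Lipschitz domains. The direct estimate then follows by applying part (i) to $\mathrm{Ext}(f)$ together with the obvious monotonicity $\osc^{N-1}_{v,\Omega}f(x,t) \leq \osc^{N-1}_v(\mathrm{Ext}(f))(x,t)$, which holds because shrinking the integration domain cannot increase the polynomial infimum. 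The converse, that the intrinsic oscillation quasi-norm controls $\norm{f\sep\mathcal{E}^s_{u,p,q}(\Omega)}$, requires producing a global surrogate for $f$. Here I would cover $\Omega$ by a locally finite family of balls adapted to the Lipschitz boundary, build local polynomial-based approximants out of the intrinsic oscillation data, and glue them through a Whitney-type extension; the hypothesis $v\geq 1$ is precisely what ensures that $\osc^{N-1}_{v,\Omega}$ satisfies a triangle-type inequality compatible with such gluing.

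The main obstacle I anticipate lies in this converse direction on Lipschitz domains: the oscillation data on $\Omega$ must be turned into a global distribution whose $\R$-quasi-norm is controlled, with careful treatment of the boundary strips where the extension construction lives and where the Morrey averages see only a fraction of each ball. A secondary technical point is the bookkeeping for the parameter thresholds in \eqref{cond_on_s}: the lower bounds involving $1/p-1$ and $1/q-1$ secure the Sobolev-type embedding into $L_1^{\loc}$ needed to integrate against polynomials, the bounds involving $1/p-1/v$ and $1/q-1/v$ give the local $L_v$-integrability making $\osc^{N-1}_v f$ meaningful and provide the correct $\ell_q$-summability in the dyadic telescoping step, and the upper bound $s<N$ is exactly what ensures that polynomials of degree $<N$ are rich enough to capture the smoothness of $\mathcal{E}^s_{u,p,q}$ while keeping the $N$-th order differences from trivializing.
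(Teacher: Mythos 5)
Your part (i) sketch is viable and close in spirit to the paper: the paper first invokes the difference characterization of \cite{Ho1} and an oscillation characterization obtained from the Hedberg--Netrusov framework \cite{HN} (rather than proving the two-sided pointwise comparison between $\osc_v^{N-1}f$ and averaged differences from scratch on $\R$), and then reshapes the resulting dyadic quasi-norm into the continuous one; your Whitney inequality plus telescoping argument is essentially what the paper does later on domains, so this direction is fine modulo the usual $p,q\le 1$ bookkeeping.

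The genuine gap is the converse direction of part (ii), i.e.\ the bound $\norm{f\sep\mathcal{E}^s_{u,p,q}(\Omega)}\ls\norm{(\int_{B(\cdot,R)\cap\Omega}\abs{f}^v)^{1/v}\sep\mathcal{M}^u_p(\Omega)}+\abs{f}^{(T,v,N)}_{\osc,\Omega}$, which is the actual content of the theorem on domains. Citing the existence of a Rychkov extension operator does not help here: it only tells you that $\norm{f\sep\mathcal{E}^s_{u,p,q}(\Omega)}$ is comparable to $\norm{\mathrm{Ext}(f)\sep\mathcal{E}^s_{u,p,q}(\R)}$, and to dominate the latter by intrinsic oscillation data of $f$ on $\Omega$ you would be running in a circle. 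Your substitute --- gluing local polynomial approximants into a Whitney-type extension whose $\mathcal{E}^s_{u,p,q}(\R)$ quasi-norm is controlled by $\abs{f}^{(T,v,N)}_{\osc,\Omega}$ --- is exactly the step you leave unconstructed, and it is nontrivial for the full range $0<p\le u<\infty$, $0<q\le\infty$ (one must verify the $\ell_q$-valued Littlewood--Paley pieces of the glued function near $\partial\Omega$, where Morrey balls see only part of the domain). The paper avoids any such construction: for special Lipschitz domains it uses the \emph{intrinsic} Littlewood--Paley characterization of \cite{Yao}, with kernels supported in a reflected cone $-K$ so that the convolutions $\phi_j\ast f$ only use values of $f$ inside $\Omega$; these local means are bounded by interior $N$-th order differences via Triebel's distinguished kernels, and the differences are in turn bounded by oscillations using near-best polynomial projections in the spirit of DeVore--Sharpley. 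Bounded Lipschitz domains are then handled by a partition of unity, affine flattening, and pointwise multiplier estimates, not by a global extension. Also, your explanation that $v\ge1$ is needed for a triangle inequality in the gluing is not where the restriction actually originates; in the paper it stems from the $L_1$-based kernel estimates in the special-Lipschitz difference bound (\autoref{prop:diff_special}), and the authors flag it as a purely technical limitation. As written, your argument establishes the easy (restriction) inequality in (ii) but not the substantive converse.
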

When we concentrate on the case $\Omega = \mathbb{R}^d$, \autoref{mainresult1} can be seen as a continuation of \cite[Chapter 4.4.3]{ysy} and \cite[Section~8.2]{LiYYSaU}, to cover a larger range of the parameters and to supply a clearly arranged quasi-norm.
If, in contrast, $\Omega$ is a special or bounded Lipschitz domain, the special case $p=u$ particularly provides new characterizations for the original Triebel-Lizorkin spaces $F^{s}_{p,q}(\Omega)$ (and hence for ordinary Sobolev spaces). 
Let us stress that for Lipschitz domains and $p \not = u$, to the best of our knowledge, there are no counterparts of \autoref{mainresult1} in the literature up to now.
 
Studying the restrictions on $s$ given in \eqref{cond_on_s} yields that the Morrey parameter $u$ is not showing up at all. A similar observation already has been made in \cite{Ho1} when proving characterizations of $\mathcal{E}^s_{u,p,q}(\mathbb{R}^d)$ in terms of higher order differences $\Delta_{h}^{N}f$; see \eqref{eq_diff_def} below for a precise definition.
In that paper also further investigations concerning the necessity of those conditions in the context of differences can be found. However, it is known since decades that local oscillations and differences are closely related to each other. 
Indeed, higher order differences have been an important tool for some of our proofs as well. 
Therefore, as a byproduct, we also obtain new characterizations in terms of higher order differences for Triebel-Lizorkin-Morrey spaces $\mathcal{E}^s_{u,p,q}$. 
The corresponding result reads as follows. 
\begin{satz}[Difference Characterizations]\label{thm_main_2}
    Let $0< p \leq u < \infty$, $0 < q,T,v \leq \infty$, $d,N \in \mathbb{N}$, and $0<R<\infty$. Moreover, assume that $s\in\re$ satisfies
    \begin{align}\label{cond_on_s2}
		d\, \max\! \left\{ 0, \frac{1}{p}-1, \frac{1}{q}-1, \frac{1}{p} - \frac{1}{v}, \frac{1}{q} - \frac{1}{v} \right\} < s < N.
    \end{align}
    Then the following assertions hold true:
    \begin{enumerate}
        \item If additionally $T \geq 1$, then $\mathcal{E}^s_{u,p,q}(\R)$ is the set of all $f \in L^{\loc}_{\max\{1,p,v\}}(\mathbb{R}^d)$ for which
        \begin{align*} 
            & \norm{\bigg(  \int_{B(\,\cdot\,, R)}     \abs{f(y)}^{v} \d y  \bigg)^{\frac{1}{v}}  \sep \mathcal{M}^{u}_{p}(\R)} \\
            & \qquad \qquad + \norm{\bigg(  \int_{0}^{T}  t^{-sq} \Big( t^{-d} \int_{B(0,t)} \abs{\Delta^{N}_{h}f(\cdot)}^{v} \d h \Big)^{\frac{q}{v}} \frac{\d t}{t} \bigg)^{\frac{1}{q}} \sep \mathcal{M}^{u}_{p}(\R)}
        \end{align*}
        is finite (equivalent quasi-norm). Furthermore, the assertion remains valid when $\norm{ \big( \int_{B(\,\cdot\,, R)} \abs{f(y)}^v \d y \big)^{\frac{1}{v}} \sep \mathcal{M}^{u}_{p}(\R)}$ is replaced by $\norm{f \sep \mathcal{M}^{u}_{p}(\R)}$.

        \item In addition, assume that $v\geq 1$ and let $\Omega$ be a special Lipschitz domain in $\R$. 
        Then $\mathcal{E}^s_{u,p,q}(\Omega)$ is the collection of all $f \in L^{\loc}_{\max\{p,v\}}(\Omega)$ for which
        \begin{align*} 
            & \norm{\bigg(  \int_{B(\cdot, R) \cap \Omega}     \abs{f(y)}^{v} \d y  \bigg)^{\frac{1}{v}}  \sep \mathcal{M}^{u}_{p}(\Omega)} \\
            & \qquad \qquad +  \norm{\bigg(  \int_{0}^{T}  t^{-sq} \Big( t^{-d} \int_{V^{N}(\,\cdot\,,t)} \abs{\Delta^{N}_{h}f(\cdot)}^{v} \d h \Big)^{\frac{q}{v}} \frac{\d t}{t} \bigg)^{\frac{1}{q}} \sep \mathcal{M}^{u}_{p}(\Omega)}
        \end{align*}
        is finite (equivalent quasi-norm).
        If additionally $p\geq 1$, this statement remains true when $\norm{ \big( \int_{B(\,\cdot\,, R)\cap\Omega} \abs{f(y)}^v \d y \big)^{\frac{1}{v}} \sep \mathcal{M}^{u}_{p}(\Omega)}$ is replaced by $\norm{f \sep \mathcal{M}^{u}_{p}(\Omega)}$.

        \item Additionally, assume that $\Omega$ is a bounded convex Lipschitz domain in $\R$ and let $p,q>1$ as well as $v=\infty$ such that \eqref{cond_on_s2} reduces to
        \begin{align*}
   		d\, \max\! \left\{ \frac{1}{p}, \frac{1}{q} \right\} < s < N.
        \end{align*}    
        Then $\mathcal{E}^s_{u,p,q}(\Omega)$ is the collection of all $f \in L^{\loc}_{\infty}(\Omega)$ for which
        \begin{align*} 
            & \norm{ \esssup_{y\in B(\,\cdot\,, R) \cap \Omega} \abs{f(y)} \sep \mathcal{M}^{u}_{p}( \Omega)} 
            +  \norm{\bigg(  \int_{0}^{T}  t^{-sq} \Big( \esssup_{h\in V^N(\cdot,t)}\! \abs{\Delta^{N}_{h} f(\cdot)} \Big)^{q} \frac{\d t}{t} \bigg)^{\frac{1}{q}} \sep \mathcal{M}^{u}_{p}(\Omega)}  
        \end{align*}
        is finite (equivalent quasi-norm).
        Moreover, the assertion remains valid when $\big\Vert \esssup_{y\in B(\,\cdot\,, R) \cap \Omega} \abs{f(y)} \big| \mathcal{M}^{u}_{p}( \Omega) \big\Vert$ is replaced by $\norm{f \sep \mathcal{M}^{u}_{p}(\Omega)}$.
        \end{enumerate}
    Therein we set $V^{N}(x,t) := \{ h \in \mathbb{R}^d \sep  \abs{h} < t \ \mbox{and} \ x + \ell h \in \Omega \ \mbox{for all} \ 0 \leq \ell \leq N\}$ for $t>0$.
    In all cases, the usual modifications have to be made if $q=\infty$ and/or $v=\infty$.
\end{satz}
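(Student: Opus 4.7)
The plan is to derive \autoref{thm_main_2} from \autoref{mainresult1} by proving that, in each of the three settings, the $\mathcal{M}^{u}_{p}$-norm of the oscillation integral appearing in \autoref{mainresult1} is equivalent to the $\mathcal{M}^{u}_{p}$-norm of the difference integral in \autoref{thm_main_2}. Since the low-frequency terms are literally the same, and $\norm{\,\cdot \sep \mathcal{M}^{u}_{p}(\Omega)}$ is monotone in pointwise estimates in $x$, everything reduces to comparing the two inner $t$-integrals pointwise in $x$ (up to the harmless change of scale $t\mapsto ct$ with $c=c(N)$), after which the Morrey norm is applied verbatim. Under the additional assumption $p\geq 1$, the local $L_{v}$-average of $f$ may be replaced by $\norm{f \sep \mathcal{M}^{u}_{p}(\Omega)}$ by the same argument as for \autoref{mainresult1}.

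For the bound of the difference integral by the oscillation integral I would use $\Delta^{N}_{h}P=0$ for every $P\in\mathcal{P}_{N-1}$, which yields the expansion
\[
\Delta^{N}_{h} f(x)=\sum_{\ell=0}^{N}(-1)^{N-\ell}\binom{N}{\ell}(f-P)(x+\ell h).
\]
A quasi-triangle inequality and the change of variables $y=x+\ell h$ in each summand give pointwise
\[
\Big(t^{-d}\int_{V_{t}}\abs{\Delta^{N}_{h} f(x)}^{v}\d h\Big)^{1/v}\lesssim \Big((Nt)^{-d}\int_{B(x,Nt)\cap\Omega}\abs{f(y)-P(y)}^{v}\d y\Big)^{1/v},
\]
where $V_{t}=B(0,t)$ in part~(i) and $V_{t}=V^{N}(x,t)$ in parts~(ii) and~(iii). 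In the latter cases the set $V^{N}(x,t)$ is defined precisely so that $x+\ell h\in\Omega$ for all $\ell=0,\dots,N$, which is exactly what is needed to keep the change of variables inside $\Omega$. Taking the infimum over $P\in\mathcal{P}_{N-1}$ produces $\osc_{v,\Omega}^{N-1} f(x,Nt)$ on the right-hand side; raising to the $q$-th power, multiplying by $t^{-sq-1}$ and integrating in $t\in(0,T)$ proves one of the two inequalities. In part~(i) the extra assumption $T\geq 1$ enters to absorb the shift $t\mapsto Nt$ together with a globalisation of the low-frequency term. The case $v=\infty$ of part~(iii) is handled analogously, with $\esssup$ replacing the $L_{v}$-mean on both sides of the display.

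The converse direction rests on a Whitney-type inequality: on each ball $B=B(x,t)$, respectively on $B\cap\Omega$, the best $L_{v}$-polynomial approximation error of degree at most $N-1$ is controlled, with an absolute constant, by an $L_{v}$-mean (resp.\ essential supremum in part~(iii)) of the $N$-th order differences of $f$ over an admissible set of directions. For $\Omega=\R$ this is the classical Brudnyi-Whitney inequality; for a special Lipschitz $\Omega$ an interior-cone argument yields the $V^{N}(x,t)$-variant with uniform constants; for a bounded convex Lipschitz $\Omega$ with $v=\infty$ the corresponding $L_{\infty}$-Whitney estimate on convex bodies applies. Substituting this bound into the definition of $\osc_{v,\Omega}^{N-1} f(x,t)$ and using a dyadic decomposition in $t$ yields a pointwise-in-$x$ estimate of the oscillation by the difference quantity at a slightly larger scale, and the reverse inequality then follows by the same integration-in-$t$ argument.

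The main obstacle is the geometric control needed in parts~(ii) and~(iii): one has to guarantee that for every $x\in\Omega$ and every sufficiently small $t>0$ the admissible set $V^{N}(x,t)$ covers a uniformly positive fraction of $B(0,t)$, so that both the change-of-variables step and the Whitney estimate keep their constants independent of $x$ and $t$. The cone condition inherent in a special Lipschitz domain, reinforced by convexity in part~(iii), supplies exactly this uniformity. Once these geometric lemmas are in place, transferring the resulting pointwise estimates through $\norm{\,\cdot \sep \mathcal{M}^{u}_{p}(\Omega)}$ and combining them with \autoref{mainresult1} concludes all three parts in the same manner.
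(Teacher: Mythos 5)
Your plan inverts the paper's architecture (the paper proves the special-Lipschitz difference bound directly via Rychkov--Yao Littlewood--Paley families and Triebel's distinguished kernels, then derives the oscillation characterizations from it, and uses Whitney only for the convex case with $v=\infty$), which would be fine in principle, but two of your key pointwise estimates have genuine gaps. First, in the direction ``differences $\lesssim$ oscillations'', the summand with $\ell=0$ in the expansion $\Delta^N_h f(x)=\sum_{\ell=0}^N(-1)^{N-\ell}\binom{N}{\ell}(f-P)(x+\ell h)$ does not depend on $h$, so after integrating over $h\in V_t$ it contributes $\abs{(f-P)(x)}$ --- a pointwise value of $f-P$, not an $L_v$-average. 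Your displayed inequality is therefore false for general $f\in L_v^{\loc}$: one cannot dominate $\abs{(f-P)(x)}$ by the $L_v$-mean of $f-P$ over $B(x,Nt)\cap\Omega$ at a fixed scale $t$. The paper keeps this extra term, chooses $P=\Pi^{N-1}_{x,t}f$ (the quasi-optimal projections of \autoref{lem:opt_pol}), and then needs a separate, substantive step to control $\int_0^N t^{-sq}\,\abs{[f-\Pi^{N-1}_{x,t}f](x)}^q\,\frac{\d t}{t}$ by the oscillation integral, via a telescoping sum over dyadic scales, the a.e.\ limit $\Pi^{N-1}_{x,2^{-L}t}f(x)\to f(x)$, and a convergent geometric series (this is where $s>0$ enters). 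Some such argument is indispensable and is missing from your proposal.

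Second, your converse direction for part (ii) rests on a Whitney-type inequality on $B(x,t)\cap\Omega$, for a special Lipschitz domain $\Omega$, in which the best $L_v$-polynomial approximation is controlled by an $L_v$-\emph{mean} of $\Delta^N_h f$ over $V^N(x,t)$ with constants uniform in $x$ and $t$. This is not the classical Brudnyi--Whitney inequality: the available results concern cubes or balls (\cite{HN}) or bounded \emph{convex} domains with a \emph{supremum} over $h$ on the right-hand side (\autoref{lem_Whitney_est1}, from \cite{DekLev}); $B(x,t)\cap\Omega$ need not be convex, and the paper explicitly lists the integral-mean Whitney estimate on non-convex Lipschitz domains as an open problem in its concluding section. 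This is precisely why the paper proves the upper bound of part (ii) by an entirely different mechanism --- the intrinsic Littlewood--Paley estimate \eqref{eq_LPTC1_new} combined with the kernel representation \eqref{eq_kern_diff1_new} --- and invokes Whitney only in part (iii), where convexity and $v=\infty$ make \autoref{lem_Whitney_est1} applicable. As written, your argument for part (ii) therefore relies on an unproved (and possibly unavailable) geometric lemma, and an ``interior-cone argument'' is not a substitute for it.
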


Similar results for the special case $\Omega = \mathbb{R}^d$ can already be found in the literature; see, e.g., \cite{Ho1} and the references listed in \autoref{sect:diff_Rd}. 
Note that the conditions on the smoothness parameter $s$ in~\eqref{cond_on_s2} of \autoref{thm_main_2} are exactly the same as in~\eqref{cond_on_s} from \autoref{mainresult1}. 
For a discussion concerning necessity of those conditions we refer to \cite{Ho1}, see also \cite{HoN,HoSi20}.

This paper is organized as follows. 
In \autoref{sect:prelim} we recall the definition of Morrey and Triebel-Lizorkin-Morrey spaces defined on both $\mathbb{R}^d$ and domains. 
Moreover, here we collect some useful properties of those spaces.
\autoref{sect:quasi-norms} contains the definition of our new quasi-(semi)norms which are frequently used later on.
The Sections \ref{sect:characterizations_Rd} and \ref{sect:characterizations_domains} are devoted to the 
intrinsic characterizations of~$\mathcal{E}^{s}_{u,p,q}$ on $\R$ and Lipschitz domains $\Omega$, respectively, in terms of local oscillations and higher order differences. 
Here we prove parts (i) and (ii) of our main Theorems \ref{mainresult1} and \ref{thm_main_2}. 
In this context we also recall some results concerning this topic that are already known. 
Finally, in \autoref{Sec_Diff1_re} we derive the equivalent descriptions via higher order differences of the spaces $\mathcal{E}^{s}_{u,p,q}(\Omega)$ defined on bounded convex Lipschitz domains $\Omega$ which are stated in \autoref{thm_main_2}(iii) above.  
However, first of all we shall fix some notation.

\medskip

\noindent\textbf{Notation:} As usual, $\N$ denotes the natural numbers, $\N_0:=\N\cup\{0\}$, $\zz$ describes the integers and~$\re$ the real numbers. Further, $\R$ with $d\in\N$ denotes the $d$-dimensional Euclidean space and we put
$$
    B(x,t) := \left\{y\in \R \sep \abs{x-y}< t \right\}\, , \qquad x \in \R,\,\; t>0.
$$
All functions are assumed to be complex-valued, i.e.\ we consider functions $f\colon \R \to \com$. 
We let $\mathcal{S}(\R)$ be the collection of all Schwartz functions on $\R$ endowed with the usual topology and by $\mathcal{S}'(\R)$ we denote its topological dual, the space of all bounded linear functionals on~$\mathcal{S}(\R)$ equipped with the weak-$\ast$ topology. 
The symbol $\cf$ refers to the Fourier transform and $\cfi$ to its inverse, both defined on $\cs'(\R)$. 
For domains (open connected sets) $\Omega\subseteq\R$ and $0<v\leq \infty$, by $L_v^\loc(\Omega)$ we mean the set of locally $v$-integrable (or locally essentially bounded) functions on $\Omega$. Furthermore, $\mathcal{D}(\Omega)=C_0^\infty(\Omega)$ denotes the set of infinitely often differentiable functions with compact support on $\Omega$. 
Its topological dual, $\mathcal{D}'(\Omega)$, is the space of distributions on $\Omega$.
Almost all function spaces considered in this paper are subspaces of regular distributions from $\cs'(\R)$ or $\mathcal{D}'(\Omega)$, interpreted as spaces of equivalence classes with respect to almost everywhere equality. 
Given two quasi-Banach spaces $X$ and $Y$, the norm of a linear operator $T\colon X\to Y$ is denoted by $\norm{T \sep \cl (X,Y)}$. 
Moreover, we write $X \hookrightarrow Y$ if the natural embedding of $X$ into $Y$ is continuous. 
For $0<p<\infty$ and $0<q\leq \infty$ we shall use the well-established quantities
$$
   \sigma_p:= d\,  \max\!\left\{0, \frac 1p - 1\right\} \qquad \text{and}\qquad 
    \sigma_{p,q}:= d\, \max\!\left\{0, \frac 1p -1 , \frac 1q - 1 \right\}.
$$
The symbols $C, C_1, c, c_{1}, \ldots$ denote positive constants depending only on the fixed para\-meters $d$, $s$, $u$, $p$, $q$, $v$, $N$, and probably on auxiliary functions. 
Unless otherwise stated their values may vary from line to line. 
With $A \lesssim B$ we mean $ A \leq C B  $ for a constant $ C > 0 $ independent of $A$ and $B$. The notation $ A \sim B $ stands for $A \lesssim B$ and $B \lesssim A$.

\section{Preliminaries}\label{sect:prelim}
\subsection{Triebel-Lizorkin-Morrey Spaces: Definitions and Basic Properties}\label{Sec_Defi}

Triebel-Lizorkin-Morrey spaces $ \mathcal{E}^{s}_{u,p,q}(\R)$ are spaces of distributions built upon Morrey spaces~$\mathcal{M}^{u}_{p}(\R)$. Therefore, at first we recall the definition of the latter. 

\begin{defi}[Morrey space $\mathcal{M}^{u}_{p}(\R)$]
\label{def_mor}
    For $ 0 < p \leq u < \infty$ the Morrey space $\mathcal{M}^{u}_{p}(\R)$ is the collection of all functions $ f \in L_{p}^{\loc}(\R) $ such that
    \begin{align*}
        \norm{f \sep \mathcal{M}^{u}_{p}(\R)}
        := \sup_{y \in \R, r > 0} \abs{B(y,r)}^{\frac{1}{u}-\frac{1}{p}} \left( \int_{B(y,r)} \abs{f(x)}^{p} \d x  \right)^{\frac{1}{p}} < \infty.
    \end{align*} 
\end{defi}

The Morrey spaces $ \mathcal{M}^{u}_{p}(\R)$ are known to be quasi-Banach spaces and Banach spaces if $ p \geq 1$. 
They have many connections to ordinary Lebesgue spaces $ L_{p}(\R)$. 
Indeed, for $ 0 < p_{2} \leq p_{1} \leq u < \infty $ we have
\begin{align*}
    L_{u}(\R) 
    = \mathcal{M}^{u}_{u}(\R) 
    \hookrightarrow \mathcal{M}^{u}_{p_{1}}(\R)
    \hookrightarrow \mathcal{M}^{u}_{p_{2}}(\R).
\end{align*} 
Moreover, in 2005 Tang and Xu proved the following (vector-valued) boundedness of the Hardy-Littlewood maximal operator \textit{\textbf{M}} in Morrey spaces.

\begin{lem}[{\cite[Lemma 2.5]{TangXu}}]\label{l_ineq1}
    Let $ 1 < q \leq \infty$ and $ 1 < p \leq u < \infty    $. Then for all sequences $ \lbrace f_{j} \rbrace_{j = 0 }^{ \infty }  $ of $L_1^\loc(\R)$-functions there holds (with the usual modifications in the case $ q = \infty$)
    \begin{align*}
        \norm{\left( \sum_{j = 0}^{\infty}  \abs{ (\textbf{M}  f_{j})(\cdot) }^{q} \right)^{\frac{1}{q}} \sep  \mathcal{M}^{u}_{p}( \mathbb{R}^d)} 
        \lesssim \norm{ \left( \sum_{j = 0}^{\infty}  \abs{f_{j}(\cdot)}^{q} \right)^{\frac{1}{q}} \sep \mathcal{M}^{u}_{p}( \mathbb{R}^d)}. 
    \end{align*}
\end{lem}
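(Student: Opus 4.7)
The plan is to reduce the vector-valued Morrey estimate to the classical Fefferman--Stein vector-valued maximal inequality on $L_p(\R)$ via a localization tailored to \autoref{def_mor}. Fix a ball $B=B(y_0,r)\subset\R$; it suffices to bound $|B|^{1/u-1/p}\|(\sum_j|\textbf{M} f_j|^q)^{1/q}\|_{L_p(B)}$ by a constant times $\|(\sum_j|f_j|^q)^{1/q}\sep\mathcal{M}^u_p(\R)\|$, uniformly in $y_0$ and $r$. Splitting $f_j=f_j^{(1)}+f_j^{(2)}$ with $f_j^{(1)}:=f_j\,\chi_{B(y_0,2r)}$ and using sublinearity of $\textbf{M}$ decouples the estimate into a \emph{near} and a \emph{far} contribution.

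For the near part, the classical Fefferman--Stein inequality on $L_p(\R)$ (valid for $1<p,q\leq\infty$) immediately yields
$$
\Bigl\|\bigl({\textstyle\sum_j}|\textbf{M} f_j^{(1)}|^q\bigr)^{1/q}\Bigr\|_{L_p(\R)}
\lesssim
\Bigl\|\bigl({\textstyle\sum_j}|f_j|^q\bigr)^{1/q}\Bigr\|_{L_p(B(y_0,2r))},
$$
and multiplying by $|B|^{1/u-1/p}\sim|B(y_0,2r)|^{1/u-1/p}$ produces exactly the required Morrey norm. For the far part, the decisive geometric fact is that for $x\in B$ any ball $B(x,\rho)$ meeting $B(y_0,2r)^c$ satisfies $\rho\geq r$ and hence $B(x,\rho)\subset B(y_0,2\rho)$. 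Decomposing the admissible radii into dyadic scales $\rho\in[2^{k-1}r,2^kr]$, $k\geq 1$, and applying H\"older's inequality gives a pointwise bound
$$
\textbf{M} f_j^{(2)}(x)\lesssim\sum_{k\geq 1}(2^kr)^{-d/p}\,\|f_j\|_{L_p(B(y_0,2^{k+1}r))},\qquad x\in B,
$$
independent of $x$. Taking $\ell^q$-norms in $j$ via Minkowski (justified by $q\geq 1$), exchanging $\ell^q$ and $L_p$ in favour of $(\sum_j|f_j|^q)^{1/q}$ on each ball $B(y_0,2^{k+1}r)$, and invoking \autoref{def_mor} replace each summand by a constant multiple of $(2^kr)^{-d/u}\|(\sum_j|f_j|^q)^{1/q}\sep\mathcal{M}^u_p(\R)\|$. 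The geometric series converges since $u<\infty$, and multiplying the uniform-in-$x$ bound by $|B|^{1/u}=|B|^{1/p}\cdot|B|^{1/u-1/p}$ closes this part.

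The main obstacle lies in the far-part estimate: the dyadic tail, the $\ell^q$-summation in $j$, and the $L_p$-averaging over the expanding balls $B(y_0,2^{k+1}r)$ must cooperate so that the final bound involves the Morrey norm of the vector-valued function $(\sum_j|f_j|^q)^{1/q}$ rather than the coarser $\sum_j\|f_j\sep\mathcal{M}^u_p\|$. The hypothesis $q>1$ is crucial here as it enables Minkowski's inequality in $\ell^q$; when $q<p$ an additional Minkowski-type interchange between $\ell^q$ and $L_p$ is needed, which is standard. The case $q=\infty$ is treated separately by replacing $\ell^q$-sums with pointwise suprema and applying the scalar Morrey maximal inequality to $\sup_j|f_j|$, the latter being a consequence of the same near/far argument without the vector-valued layer.
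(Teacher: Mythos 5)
The paper itself offers no proof of this lemma; it is quoted verbatim from Tang--Xu \cite[Lemma~2.5]{TangXu}. Your near/far decomposition relative to a fixed ball $B=B(y_0,r)$, with Fefferman--Stein handling the local piece and a dyadic tail estimate handling the rest, is indeed the standard route to this result, and the local part together with the bookkeeping of the weights $|B|^{1/u-1/p}$ and the geometric series $\sum_k(2^kr)^{-d/u}$ is correct.

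There is, however, one step in the far part that fails as written. After applying H\"older on each ball $B_k:=B(y_0,2^{k+1}r)$ you are left with $\sum_k(2^kr)^{-d/p}\bigl(\sum_j\norm{f_j \sep L_p(B_k)}^q\bigr)^{1/q}$, and you then propose "exchanging $\ell^q$ and $L_p$ in favour of $(\sum_j\abs{f_j}^q)^{1/q}$". The inequality you need, namely $\bigl(\sum_j\norm{f_j \sep L_p(B_k)}^q\bigr)^{1/q}\leq\bigl\Vert(\sum_j\abs{f_j}^q)^{1/q} \sep L_p(B_k)\bigr\Vert$, is Minkowski's integral inequality and holds only for $q\geq p$; for $q<p$ it is false in general (take $f_j=\chi_{E_j}$ with $n$ disjoint sets of measure $1/n$: the left-hand side is $n^{1/q-1/p}\to\infty$ while the right-hand side stays bounded). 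So the "additional Minkowski-type interchange" you invoke for $q<p$ does not exist, and since the hypotheses allow any $1<p,q$, this is a genuine gap. The fix is a reordering of your own steps: do \emph{not} apply H\"older before summing in $j$. Keep the raw averages coming from the maximal function, so that for $x\in B$
\begin{align*}
    \Bigl(\sum_j\bigl[\textbf{M} f_j^{(2)}(x)\bigr]^q\Bigr)^{\frac1q}
    \lesssim \sum_{k\geq1}\Bigl(\sum_j\Bigl[\frac{1}{\abs{B_k}}\int_{B_k}\abs{f_j(z)}\d z\Bigr]^q\Bigr)^{\frac1q}
    \leq \sum_{k\geq1}\frac{1}{\abs{B_k}}\int_{B_k}\Bigl(\sum_j\abs{f_j(z)}^q\Bigr)^{\frac1q}\d z,
\end{align*}
where the second inequality is Minkowski's integral inequality between $\ell^q$ and the $L_1$-average, valid for every $q\geq1$ irrespective of $p$. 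Only now apply H\"older to the scalar function $(\sum_j\abs{f_j}^q)^{1/q}$ on $B_k$ and invoke \autoref{def_mor}; the rest of your argument (geometric series in $k$, multiplication by $\abs{B}^{1/u}$) then goes through unchanged for all $1<q\leq\infty$.
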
 

In order to define $\mathcal{E}^{s}_{u,p,q}(\R)$ we need a so-called smooth dyadic decomposition of unity. Let $\varphi_0 \in C_0^{\infty}({\R})$ be a non-negative function such that $\varphi_0(x) = 1$ if $\abs{x}\leq 1$ and $ \varphi_0 (x) = 0$ if $\abs{x}\geq \frac{3}{2}$. 
For $k\in \N$ we define
$$
    \varphi_k(x) := \varphi_0(2^{-k}x) - \varphi_0(2^{-k+1}x),\qquad\ x \in \R. 
$$
Then 
$$
    \sum_{k=0}^\infty \varphi_k(x) = 1, \qquad x\in \R, 
$$
and
$$
    \supp \varphi_k \subset \left\{x\in \R \sep 2^{k-1}\le \abs{x} \le 2^{k+1}\right\}, \qquad k \in \N,
$$
which justifies the name smooth dyadic decomposition of unity for the system $(\varphi_k)_{k\in \N_0 }$. 
Moreover, using the Paley-Wiener-Schwartz theorem~\cite[Theorem 2 in Section~1.2.1]{Tr83}, we find that for all $f\in\mathcal{S}'(\R)$ the distributions $\cfi[\varphi_{k}\, \cf f]\in\mathcal{S}'(\R)$, $k\in\N_0$, are actually smooth functions on $\R$. 
This allows for the following definition of Triebel-Lizorkin-Morrey spaces.

\begin{defi}[Triebel-Lizorkin-Morrey space $\mathcal{E}^{s}_{u,p,q}(\R)$]
\label{def_tlm}
    Let $ 0 < p \leq u < \infty$, $0 < q \leq \infty$, and $ s \in \mathbb{R}$. Further, let $ (\varphi_{k})_{k\in \N_0 }$ be a smooth dyadic decomposition of unity. Then the Triebel-Lizorkin-Morrey space $  \mathcal{E}^{s}_{u,p,q}(\mathbb{R}^{d})$ collects all $ f \in \mathcal{S}'(\mathbb{R}^{d})$ for which
    \begin{align*} 
        \norm{f \sep \mathcal{E}^{s}_{u,p,q}(\mathbb{R}^{d})} := \norm{ \left( \sum_{k = 0}^{\infty} 2^{ksq} \abs{\mathcal{F}^{-1}[\varphi_{k} \mathcal{F}f](\cdot)}^{q} \right)^{\frac{1}{q}} \sep \mathcal{M}^{u}_{p}(\R)} < \infty.
    \end{align*}
    If $ q = \infty$, the usual modifications are made.
\end{defi}

Let us collect some well-known basic properties of Triebel-Lizorkin-Morrey spaces. Most of them will be used in proofs later on.

\begin{lem}\label{l_bp1}
    Let $ 0 < p \leq u < \infty $, $ 0 < q \leq \infty $ and $ s \in \mathbb{R} $. Then the following holds.
    \begin{enumerate}
        \item $\mathcal{E}^{s}_{u,p,q}(\mathbb{R}^{d})$ is independent of the chosen smooth dyadic decomposition of unity in the sense of equivalent quasi-norms. 
        
        \item The spaces $  \mathcal{E}^{s}_{u,p,q}(\mathbb{R}^{d}) $ are quasi-Banach spaces. For $ p,q \geq 1 $ they are Banach spaces.

        \item With $\tau := \min\{1,p,q\}$  we have
        $$  
            \norm{f + g  \sep \mathcal{E}^{s}_{u,p,q}(\mathbb{R}^{d}) }^{\tau} \leq \norm{f \sep \mathcal{E}^{s}_{u,p,q}(\mathbb{R}^{d}) }^{\tau} + \norm{ g \sep \mathcal{E}^{s}_{u,p,q}(\mathbb{R}^{d}) }^{\tau}, \quad f,g \in \mathcal{E}^{s}_{u,p,q}(\mathbb{R}^{d}).
        $$
        
        \item $\mathcal{S}(\mathbb{R}^{d}) \hookrightarrow    \mathcal{E}^{s}_{u,p,q}(\mathbb{R}^{d}) \hookrightarrow   \mathcal{S}'(\mathbb{R}^{d})$.

        \item $\mathcal{E}^{s}_{p,p,q}(\mathbb{R}^{d}) = F^{s}_{p,q}(\R)$.

        \item If $p>1$, then $\mathcal{E}^{0}_{u,p,2}(\mathbb{R}^{d}) = \mathcal{M}^{u}_{p}(\R)$.
    \end{enumerate}
\end{lem}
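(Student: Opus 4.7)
The plan is to dispatch the six items in turn, relying throughout on the vector-valued Morrey maximal inequality from \autoref{l_ineq1} as the main workhorse. This inequality is the Morrey substitute for Fefferman--Stein, and with it most of the classical Triebel--Lizorkin proofs carry over to $\mathcal{E}^{s}_{u,p,q}(\R)$ after cosmetic changes.

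For (i), I would fix two smooth dyadic decompositions $(\varphi_k)_{k\in\N_0}$ and $(\psi_k)_{k\in\N_0}$ and prove a pointwise Peetre-type estimate
$$
    \abs{\cfi[\psi_k \cf f](x)} \lesssim \sum_{|j-k|\le C} \big(\textbf{\textit{M}}\,\abs{\cfi[\varphi_j \cf f]}^r(x)\big)^{1/r}, \qquad x\in\R,
$$
valid for any $0<r<\min\{p,q\}$ and with a constant independent of $f$ and $k$. Inserting this into the $\ell_q(\mathcal{M}^u_p)$-quasi-norm and choosing $r<1$ small, so that $p/r>1$ and $q/r>1$, the right-hand side is controlled by \autoref{l_ineq1} applied to the sequence $2^{ksr}\abs{\cfi[\varphi_j \cf f]}^r$ in $\ell_{q/r}(\mathcal{M}^{u/r}_{p/r})$. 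Item (iii) comes for free: linearity of the building blocks $\cfi[\varphi_k\cf\,\cdot\,]$ combined with the $\min\{1,q\}$-triangle inequality for $\ell_q$ and the $\min\{1,p\}$-triangle inequality for $\mathcal{M}^u_p$ (valid because $\norm{\,\cdot \sep \mathcal{M}^u_p}^{\min\{1,p\}}$ is subadditive on non-negative functions) yields the stated $\tau$-inequality with $\tau=\min\{1,p,q\}$. The second half of (ii) is then immediate, since for $p,q\ge 1$ one has $\tau=1$.

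For (iv) I would first check the embedding $\mathcal{E}^{s}_{u,p,q}(\R)\hookrightarrow\mathcal{S}'(\R)$ by writing $f=\sum_k \cfi[\varphi_k\cf f]$ (with convergence in $\mathcal{S}'$) and estimating
$$
    \abs{\distr{f}{\phi}} \le \sum_{k=0}^\infty \int_{\R} \abs{\cfi[\varphi_k\cf f](x)}\,\abs{\phi(x)}\d x
$$
for $\phi\in\mathcal{S}(\R)$, using the elementary bound $\mathcal{M}^u_p\hookrightarrow \mathcal{M}^u_{\min\{1,p\}}\hookrightarrow\mathcal{S}'$ on band-limited pieces together with a Schwartz decay factor $2^{-kM}$ produced by trading derivatives for decay of $\phi$. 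For the converse $\mathcal{S}(\R)\hookrightarrow\mathcal{E}^{s}_{u,p,q}(\R)$, one exploits that for $\phi\in\mathcal{S}(\R)$ the frequency pieces $\cfi[\varphi_k\cf\phi]$ are again Schwartz with seminorms decaying faster than any power of $2^{-k}$; together with the trivial bound $\norm{\psi \sep \mathcal{M}^u_p(\R)}\lesssim\sup_{x}(1+|x|)^{d/u+1}\abs{\psi(x)}$ this absorbs the factor $2^{ks}$ and sums in $\ell_q$. The completeness claim of (ii) then follows by a standard Fatou/lower-semicontinuity argument: a Cauchy sequence in $\mathcal{E}^{s}_{u,p,q}$ is Cauchy in $\mathcal{S}'$ via (iv), hence converges to some $f\in\mathcal{S}'$, and taking pointwise almost-everywhere subsequential limits of $\cfi[\varphi_k\cf f_n]$ combined with Fatou for the Morrey and $\ell_q$ quasi-norms places $f$ in $\mathcal{E}^{s}_{u,p,q}$ and identifies it as the limit.

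For (v) the identity is tautological: when $u=p$ the Morrey quasi-norm coincides with $\norm{\,\cdot \sep L_p(\R)}$, so \autoref{def_tlm} reduces verbatim to the classical Triebel--Lizorkin quasi-norm from \cite{Tr83}. Item (vi), which is the deepest of the six, asserts a genuine Littlewood--Paley characterization of Morrey spaces for $p>1$: one must show
$$
    \norm{f \sep \mathcal{M}^u_p(\R)} \sim \norm{\Big(\sum_{k=0}^\infty \abs{\cfi[\varphi_k\cf f]}^2\Big)^{1/2} \sep \mathcal{M}^u_p(\R)}.
$$
The ``$\gtrsim$'' direction is an adapted vector-valued singular-integral bound; I would establish it by following the standard Mikhlin--Hörmander/Calderón--Zygmund scheme with $\textbf{\textit{M}}$ replacing $L_p$-truncations and invoking \autoref{l_ineq1} (which requires $1<p\le u$ and is the reason for the restriction $p>1$). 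The ``$\lesssim$'' direction is obtained by a duality/reconstruction argument, writing $f=\sum_k\cfi[\tilde\varphi_k\cf\cfi[\varphi_k\cf f]]$ for a slightly fattened partition $(\tilde\varphi_k)$ and testing against $\mathcal{M}^u_p$ via pointwise domination by the square function of the pieces. The chief obstacle of the whole lemma lies precisely here, since everything else is a careful bookkeeping of standard arguments whereas (vi) requires a bona fide Morrey-space Littlewood--Paley theorem; it is however already documented in the Morrey literature and can be quoted from, e.g., \cite{TangXu,ysy}.
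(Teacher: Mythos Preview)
Your proposal is correct. The paper's own proof, however, takes an even shorter route: it simply cites the existing literature for each item (Tang--Xu \cite{TangXu} for (i), Yuan--Sickel--Yang \cite{ysy} for (ii)--(iv), and Mazzucato \cite{maz} for (vi), with (v) declared obvious), giving no argument at all. What you have written is effectively a sketch of the proofs \emph{behind} those citations --- the Peetre maximal inequality plus \autoref{l_ineq1} for (i), the Fatou/lower-semicontinuity route to completeness for (ii), and the Littlewood--Paley theorem for Morrey spaces for (vi) --- so your approach is more self-contained but not genuinely different in spirit. One minor point: for (vi) the paper points to \cite{maz} rather than \cite{TangXu,ysy}; all three sources contain the needed Littlewood--Paley identification, so this is only a bibliographic nuance.
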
 

\begin{proof}
    Assertion~(i) was proved in \cite[Theorem 2.8]{TangXu}. The proofs of (ii) and (iii) are standard; we refer to \cite[Lemma 2.1]{ysy}. Also (iv) with a slightly different formulation is proven in \cite[Proposition 2.3]{ysy}. Finally, Assertion~(v) is obvious and (vi) has been shown in \cite[Proposition~4.1]{maz}.  
\end{proof}

As usual, Triebel-Lizorkin-Morrey spaces on domains $\Omega\subsetneq\R$ are defined by restriction.
For this purpose, given some $g\in \mathcal{S}'(\R)\supset \mathcal{E}^{s}_{u,p,q}(\R)$ we let the distribution $g|_\Omega\in \mathcal{D}'(\Omega)$ be defined by $g|_\Omega(\varphi):=g(\varphi)$ for all $\varphi\in \mathcal{D}(\Omega)$ ($\subset \mathcal{D}(\R) \subset \mathcal{S}(\R)$).

\begin{defi}[Triebel-Lizorkin-Morrey space $\mathcal{E}^{s}_{u,p,q}(\Omega)$]
\label{def_tlmD}
    For $ 0 < p \leq u < \infty $, $0 < q \leq \infty$, $s \in \mathbb{R}$, and domains $ \Omega \subsetneq \R$ with $d\in\N$ let
    \begin{align*}
        \mathcal{E}^{s}_{u,p,q}(\Omega) := \left\{ f \in \mathcal{D}'(\Omega) \sep f = g|_\Omega \;\text{ for some } \; g \in       \mathcal{E}^{s}_{u,p,q}(\R)  \right\}
    \end{align*}
    be endowed with the quasi-norm
    \begin{align*}
        \norm{ f \sep \mathcal{E}^{s}_{u,p,q}(\Omega)} := \inf \left\{{\norm{ g \sep \mathcal{E}^{s}_{u,p,q}(\R)}} \sep f = g|_\Omega \; \mbox{ for some } \; g \in       \mathcal{E}^{s}_{u,p,q}(\R) \right\}. 
    \end{align*}
\end{defi}

Later on, we will especially be concerned with $\mathcal{E}^{s}_{u,p,q}(\Omega)$ on Lipschitz domains $\Omega \subset \mathbb{R}^{d}$.
In the last years these spaces attracted some attention in the literature. A Rychkov universal extension operator has been constructed in \cite{ZHS}, see also \cite{GoHaSkr}. Littlewood-Paley type characterizations for those spaces have been obtained recently in \cite{Yao}. 
First results on (complex) interpolation can be found in \cite{ZHS}, see also \cite{YSY2}, and continuous embeddings have been studied in \cite{GoHaSkr,HarSkrNuc}.

For the definition of Lipschitz domains we follow Stein~\cite[VI.3.2]{Stein}.
\begin{defi}[Lipschitz domain]
\label{lipdo}
    Let $d\in\N\setminus\{1\}$.
    \begin{enumerate}
        \item A special Lipschitz domain is an open set $\Omega \subset \mathbb{R}^{d}$ lying above the graph of a Lipschitz function $\omega\colon \mathbb{R}^{d-1} \to \mathbb{R}$, namely,
        $$
            \Omega:= \{(x',x_d) \in \mathbb{R}^{(d-1)+1} \sep x_d > \omega (x')\}.
        $$
        
        \item A bounded Lipschitz domain is a bounded domain $\Omega\subset \mathbb{R}^{d}$ whose boundary $\partial \Omega$ can be covered by a finite number of open balls $B_k$ such that for each $k$ after a suitable rotation $\partial\Omega \cap B_k$ is a part of the graph of a Lipschitz function.
        
        \item By a Lipschitz domain, we mean either a special or a bounded Lipschitz domain. 
    \end{enumerate}
\end{defi}
\begin{rem}
    For notational simplicity we shall use the convention that a (bounded) Lip\-schitz domain in $\mathbb{R}$ is just a (bounded) interval.
\end{rem}

In the spirit of \autoref{def_tlmD} we could also define Morrey spaces on domains. 
Nevertheless, we prefer the following direct approach.
Therein, $M(\Omega)$ denotes the set of all measurable complex-valued functions on $\Omega\subseteq\R$. 
\begin{defi}[Morrey space $\mathcal{M}^{u}_{p}(\Omega)$]
\label{def:morrey_mod}
    For $d\in\N$ let $\Omega\subseteq\R$ be a domain.
    Then for $0<p \leq u < \infty$ the Morrey space $\mathcal{M}^{u}_{p}(\Omega)$ is given by
    $$
        \mathcal{M}^{u}_{p}(\Omega) := \left\{ f\in M(\Omega) \sep \norm{f \sep \mathcal{M}^{u}_{p}(\Omega)}<\infty \right\}
    $$
    endowed with the (quasi-)norm
    $$
        \norm{f \sep \mathcal{M}^{u}_{p}(\Omega)} := \sup_{y\in\Omega, r>0} r^{d(\frac{1}{u}- \frac{1}{p})} \left( \int_{\Omega\cap B(y,r)} \abs{f(x)}^p \d x \right)^{\frac{1}{p}}.
    $$
\end{defi}

\begin{rem}
    Clearly, $\mathcal{M}^{u}_{p}(\Omega)\subseteq L_p^{\loc}(\Omega)$. Furthermore, in Definition \ref{def:morrey_mod} we could use cubes instead of balls and/or replace $r>0$ by $2^j$ with $j\in\mathbb{Z}$ to obtain equivalent quasi-norms. 
    Moreover, let us stress that for $\Omega=\R$ this definition is equivalent to \autoref{def_mor} above. 
    However, a trivial extension of $f\in L_p^{\loc}(\Omega)$ is not granted to be in $ L_p^{\loc}(\R)$. 
    To avoid these difficulties we used $M(\Omega)$ in \autoref{def:morrey_mod}.
    Finally note that the weight does not change for balls close to~$\partial\Omega$ in order to avoid restrictions on $\Omega$ (such as the so-called measure density condition). 
\end{rem}

\subsection{Further Technical Properties}
Let us collect some additional useful properties of the Morrey spaces we just defined. 
\begin{lem}\label{lem:tools_M}
    Let $0< p \leq u < \infty$ and let $\Omega,\Omega_1,\Omega_2\subseteq\R$ be domains.
    \begin{enumerate}
        \item The (restriction of the) trivial extension $E\colon M(\Omega)\to M(\R)$, given by 
        $$
            Ef(x):=\begin{cases}
            f(x), & x\in \Omega,\\
            0, & x\in\R\setminus\Omega,
            \end{cases}
        $$
        provides a linear and continuous extension $\mathcal{M}^{u}_{p}(\Omega) \to \mathcal{M}^{u}_{p}(\R)$.
        
        \item There holds $\mathcal{M}^{u}_{p}(\Omega) = \left\{ f\in M(\Omega) \sep \exists F\in\mathcal{M}^{u}_{p}(\R)\colon F\vert_\Omega=f \text{ a.e.\ in } \Omega \right\}$, 
        where
        \begin{align}\label{eq:normeq}
            \norm{f \sep \mathcal{M}^{u}_{p}(\Omega)} \sim \inf_{\substack{F\in\mathcal{M}^{u}_{p}(\R)\\ F\vert_\Omega = f \text{ a.e.\ in } \Omega}} \norm{F \sep \mathcal{M}^{u}_{p}(\R)}.
        \end{align}
        
        \item For all $G\in L_\infty(\R)$ there exists $c_G>0$ such that
        $$
            \norm{ G\vert_{\Omega} \cdot f \sep \mathcal{M}^{u}_{p}(\Omega)} 
            \leq c_G \norm{ f \sep  \mathcal{M}^{u}_{p}(\Omega)}, \qquad f \in  \mathcal{M}^{u}_{p}(\Omega).
        $$
        
        \item Every affine-linear diffeomorphism $\Phi\colon\R\to\R$ (i.e.\ $\Phi(x)=Ax+b$ with $\abs{\det A}\neq 0$) yields an isomorphism $T_\Phi \colon f\mapsto f\circ \Phi$ of $\mathcal{M}^{u}_{p}(\Phi(\Omega))$ onto $\mathcal{M}^{u}_{p}(\Omega)$.
        
        \item Let $S\subseteq\R$ be such that $S\cap \Omega_1 = S\cap \Omega_2$. Then for all $F\in M(\R)$ with $\supp F\subseteq S$ we have
        $$
            \norm{ F\vert_{\Omega_2} \sep \mathcal{M}^{u}_{p}(\Omega_2)} \sim \norm{ F\vert_{\Omega_1} \sep  \mathcal{M}^{u}_{p}(\Omega_1)}.
        $$
        
        \item If $0<\mu<\infty$, then $f\in \mathcal{M}^{u}_{p}(\Omega)$ if and only if $\abs{f}^\mu \in \mathcal{M}^{u/\mu}_{p/\mu}(\Omega)$. In this case
        $$
            \norm{f \sep \mathcal{M}^{u}_{p}(\Omega)} = \norm{\abs{f}^\mu \sep \mathcal{M}^{\frac{u}{\mu}}_{\frac{p}{\mu}}(\Omega)}^{\frac{1}{\mu}}.
        $$
        
        \item For $F\in \mathcal{M}^{u}_{p}(\R)$, $0< v \leq p$, and $R>0$ we have
        $$
            \norm{ \left( \int_{B(\cdot,R)} \abs{F(x)}^v \d x \right)^{\frac{1}{v}} \sep \mathcal{M}^{u}_{p}(\R)}
            \lesssim \norm{ F \sep \mathcal{M}^{u}_{p}(\R)}.
        $$
    \end{enumerate}
\end{lem}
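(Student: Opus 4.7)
The plan is to dispatch the seven items in order, since each reduces to a direct manipulation of the quasi-norm in \autoref{def:morrey_mod}.

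For (i), given $f\in\mathcal{M}^{u}_{p}(\Omega)$ and a ball $B(y,r)\subseteq\R$ meeting $\Omega$, I would pick $y_0\in\Omega\cap B(y,r)$, exploit $B(y,r)\subseteq B(y_0,2r)$, and estimate $r^{d(\frac{1}{u}-\frac{1}{p})}\bigl(\int_{B(y,r)}\abs{Ef}^{p}\d x\bigr)^{1/p}\leq 2^{d(\frac{1}{p}-\frac{1}{u})}\norm{f\sep\mathcal{M}^{u}_{p}(\Omega)}$; taking the supremum over $y\in\R$ and $r>0$ gives (i). Part (ii) is then immediate: (i) shows that $Ef$ is an admissible extension, while any $F\in\mathcal{M}^{u}_{p}(\R)$ with $F|_{\Omega}=f$ a.e.\ satisfies $\int_{B(y,r)\cap\Omega}\abs{f}^{p}\leq\int_{B(y,r)}\abs{F}^{p}$ for all $y\in\Omega,\,r>0$; first taking the supremum and then the infimum over $F$ yields \eqref{eq:normeq}. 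Item (iii) follows from the pointwise bound $\abs{G|_{\Omega}\cdot f}^{p}\leq\norm{G\sep L_{\infty}(\R)}^{p}\abs{f}^{p}$.

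For (iv), writing $\Phi(x)=Ax+b$, the image $\Phi(B(y,r))$ is an ellipsoid squeezed between $B(\Phi(y),\norm{A^{-1}}^{-1}r)$ and $B(\Phi(y),\norm{A}r)$; combined with the substitution $z=\Phi(x)$ with Jacobian $\abs{\det A}^{-1}$ this yields $\norm{T_\Phi f\sep\mathcal{M}^{u}_{p}(\Omega)}\sim\norm{f\sep\mathcal{M}^{u}_{p}(\Phi(\Omega))}$ with constants depending on $A$, $b$, $p$, $u$, $d$ only. For (v), the support condition $\supp F\subseteq S$ together with $S\cap\Omega_1=S\cap\Omega_2$ gives
$$
\int_{\Omega_1\cap B(y,r)}\abs{F}^{p}\d x=\int_{S\cap B(y,r)}\abs{F}^{p}\d x=\int_{\Omega_2\cap B(y,r)}\abs{F}^{p}\d x
$$
for every $y\in\R$, $r>0$, so the two Morrey norms can differ only through the ranges $y\in\Omega_1$ and $y\in\Omega_2$ of admissible centres. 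That gap is absorbed by the same nearest-centre argument as in (i), costing only the constant $2^{d(\frac{1}{p}-\frac{1}{u})}$ in each direction.

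Item (vi) is immediate from the definition: plugging $g=\abs{f}^{\mu}$, $u'=u/\mu$, $p'=p/\mu$ into \autoref{def:morrey_mod} gives $\norm{\abs{f}^\mu\sep\mathcal{M}^{u/\mu}_{p/\mu}(\Omega)}^{1/\mu}=\norm{f\sep\mathcal{M}^{u}_{p}(\Omega)}$. Finally, for (vii) I would first apply H\"older on $B(x,R)$ to write
$$
\biggl(\int_{B(x,R)}\abs{F(y)}^{v}\d y\biggr)^{1/v}\lesssim R^{d(\frac{1}{v}-\frac{1}{p})}\biggl(\int_{B(x,R)}\abs{F(y)}^{p}\d y\biggr)^{1/p},
$$
reducing the claim to $v=p$. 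There, a Fubini argument rewrites $\int_{B(y,r)}\int_{B(x,R)}\abs{F(z)}^{p}\d z\d x$ as $\int\abs{F(z)}^{p}\abs{B(y,r)\cap B(z,R)}\d z$, whose integrand is dominated by $\min(r,R)^{d}\,\mathbf{1}_{B(y,r+R)}(z)$; splitting the supremum defining $\norm{\,\cdot\,\sep\mathcal{M}^{u}_{p}(\R)}$ into the regimes $r\leq R$ and $r>R$ and then invoking the Morrey bound on the ball $B(y,2\max(r,R))$ yields (vii) with an $R$-dependent constant. The main obstacle I expect is the bookkeeping in (v), where the suprema over centres $y\in\Omega_{1}$ and $y\in\Omega_{2}$ have to be identified up to equivalence; this relies on the nearest-centre comparison already developed in (i), which shows that enlarging the range of admissible centres from $\Omega$ to all of $\R$ only costs the harmless factor $2^{d(\frac{1}{p}-\frac{1}{u})}$.
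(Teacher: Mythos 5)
Your arguments for (i)--(vi) follow essentially the same route as the paper: the nearest-centre comparison $B(y,r)\subset B(y',2r)$ for (i) and (ii), the pointwise bound for (iii), the change of variables combined with $\Phi(B(y,r))\subseteq B(\Phi(y),\norm{A}_2 r)$ for (iv), and the observation that the integrals over $\Omega_1\cap B(y,r)$ and $\Omega_2\cap B(y,r)$ coincide for (v) (the paper routes this through the trivial extension $E(F\vert_{\Omega_1})$ and part (i), but the content is identical; note only that your intermediate term $\int_{S\cap B(y,r)}\abs{F}^p$ should read $\int_{S\cap\Omega_1\cap B(y,r)}\abs{F}^p$, since $F$ need not vanish on $S\setminus(\Omega_1\cup\Omega_2)$ --- the outer equality you actually use is nevertheless correct). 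Part (vi) is the same direct computation.

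For (vii) you take a genuinely different route. The paper exploits $p/v\geq 1$ to apply Minkowski's integral inequality, pulling the Morrey quasi-norm inside the $h$-integral and concluding by translation invariance, which gives the clean bound $\abs{B(0,R)}^{1/v}\norm{F\sep\mathcal{M}^u_p(\R)}$. You instead use H\"older to reduce to $v=p$ and then a Fubini/overlap-counting argument: $\int_{B(y,r)}\int_{B(x,R)}\abs{F}^p\d z\d x=\int\abs{F(z)}^p\abs{B(y,r)\cap B(z,R)}\d z\lesssim\min(r,R)^d\int_{B(y,2\max(r,R))}\abs{F}^p\d z$, after which the exponent check $r^{dp(\frac1u-\frac1p)}\min(r,R)^d\max(r,R)^{dp(\frac1p-\frac1u)}\lesssim_R 1$ works in both regimes because $d-dp(\frac1p-\frac1u)=dp/u>0$. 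Both proofs are correct and yield $R$-dependent constants; the paper's is shorter and avoids the case distinction, while yours is more elementary (no vector-valued Minkowski) and makes the geometric mechanism --- the bounded overlap of $B(y,r)$ with the translated balls $B(z,R)$ --- explicit.
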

\begin{proof}
\textit{Step 1. }
    At first we prove (i). W.l.o.g.\ assume $\Omega\neq\R$ and let $f\in \mathcal{M}^{u}_{p}(\Omega)$. Then we clearly have $Ef\in M(\R)$ and $(Ef)\vert_\Omega=f$ (pointwise a.e.) in $\Omega$. We show
    $$
        \norm{Ef \sep \mathcal{M}^{u}_{p}(\R)} \lesssim \norm{f \sep \mathcal{M}^{u}_{p}(\Omega)}.
    $$
    To this end, we adapt the idea of \cite[Theorem~2.3]{HarSchSkr18}
    and let $y\in\R$ as well as $r>0$. 
    If $\distance{y}{\Omega}\geq r$, then $B(y,r)\cap \Omega=\emptyset$ and hence the definition of $E$ yields $\int_{B(y,r)} \abs{Ef(x)}^p \d x = \int_{\Omega \cap B(y,r)} \abs{f(x)}^p \d x=0$.
    If otherwise $\distance{y}{\Omega}< r$, there exists $y'\in\Omega$ such that $B(y,r)\subset B(y',2r)$ and thus 
    $$
        r^{d(\frac{1}{u}- \frac{1}{p})} \left( \int_{B(y,r)} \abs{Ef(x)}^p \d x \right)^{\frac{1}{p}}
        \lesssim (2r)^{d(\frac{1}{u}- \frac{1}{p})} \left( \int_{\Omega\cap B(y',2r)} \abs{f(x)}^p \d x \right)^{\frac{1}{p}}.
    $$
    Therefore,
    \begin{align*}
        \norm{Ef \sep \mathcal{M}^{u}_{p}(\R)} 
        &= \sup_{y\in\R, r>0} r^{d(\frac{1}{u}- \frac{1}{p})} \left( \int_{B(y,r)} \abs{Ef(x)}^p \d x \right)^{\frac{1}{p}} \\
        &\lesssim \sup_{y'\in\Omega, r>0} |B(y',r)|^{\frac{1}{u}- \frac{1}{p}} \left( \int_{\Omega\cap B(y',r)} \abs{f(x)}^p \d x \right)^{\frac{1}{p}}
        \lesssim  \norm{f \sep \mathcal{M}^{u}_{p}(\Omega)}<\infty.
    \end{align*}
This shows (i)  as well as ``$\subseteq$'' in (ii) and ``$\gtrsim$'' in \eqref{eq:normeq}.
    
    \textit{Step 2. } In order to complete the proof of (ii) now let $f\in M(\Omega)$ and $F\in\mathcal{M}^{u}_{p}(\R)$ with $F\vert_\Omega=f$ a.e.\ in $\Omega$.  Then we observe
    \begin{align*}
        \norm{f \sep \mathcal{M}^{u}_{p}(\Omega)} 
        & \lesssim \sup_{y\in\Omega, r>0} r^{d(\frac{1}{u}- \frac{1}{p})} \left( \int_{\Omega\cap B(y,r)} \abs{F(x)}^p \d x \right)^{ \frac{1}{p}} \\
        &\leq \sup_{y\in\R, r>0} r^{d(\frac{1}{u}- \frac{1}{p})} \left( \int_{B(y,r)} \abs{F(x)}^p \d x \right)^{ \frac{1}{p}}
        =\norm{F \sep \mathcal{M}^{u}_{p}(\R)}<\infty
    \end{align*}
    which shows $f\in \mathcal{M}^{u}_{p}(\Omega)$, i.e.\ ``$\supseteq$'' in~(ii).
    Since $F$ was arbitrary, we further have ``$\lesssim$'' in~\eqref{eq:normeq} and hence (ii).
    
    \textit{Step 3. }
    Since (iii) directly follows from \autoref{def:morrey_mod}, we move on and show (iv).
    For $f\in \mathcal{M}^{u}_{p}(\Phi(\Omega))\subset M(\Phi(\Omega))$ we have $f\circ \Phi \in M(\Omega)$.
    Moreover, for $y\in\Omega$ and $r>0$ a transformation of measure shows
    $$
        \int_{\Omega\cap B(y,r)} \abs{f(\Phi(x))}^p \d x = \frac{1}{\abs{\det A}} \int_{\Phi(\Omega\cap B(y,r))} \abs{f(z)}^p \d z,
    $$
    where $\Phi(\Omega\cap B(y,r)) \subseteq \Phi(\Omega)\cap \Phi(B(y,r)) \subseteq \Phi(\Omega)\cap B(\Phi(y),\norm{A}_2 r)$ since
    $$
        \abs{z - \Phi(y)} = \abs{\Phi(x)-\Phi(y)} = \abs{A\,(x-y)} \leq \norm{A}_2 \abs{x-y}
    $$
    for all $z= \Phi(x) \in \Phi(B(y,r))$, where $\norm{A}_2\neq 0$ denotes the spectral norm of the matrix~$A$.
    Thus 
    \begin{align*}
        \norm{f\circ\Phi \sep \mathcal{M}^{u}_{p}(\Omega)}
        &\lesssim \sup_{y\in\Omega, r>0} r^{d(\frac{1}{u}- \frac{1}{p})} \left( \int_{\Phi(\Omega)\cap B(\Phi(y),\norm{A}_2 r)} \abs{f(z)}^p \d z \right)^{ \frac{1}{p}} \\
        &\lesssim \sup_{x\in\Phi(\Omega), R>0} R^{d(\frac{1}{u}- \frac{1}{p})} \left( \int_{\Phi(\Omega)\cap B(x,R)} \abs{f(z)}^p \d z \right)^{ \frac{1}{p}} = \norm{f \sep \mathcal{M}^{u}_{p}(\Phi(\Omega))}
    \end{align*}
    which also shows $f\circ\Phi \in \mathcal{M}^{u}_{p}(\Omega)$.
    
    \textit{Step 4. }
    To show (v) it suffices to prove a one-sided estimate as the other one follows by exchanging the roles of $\Omega_1$ and $\Omega_2$. Using (i), the support properties of $F$ imply
    \begin{align*}
        \norm{F\vert_{\Omega_2} \sep \mathcal{M}^{u}_{p}(\Omega_2)}
        &\lesssim \sup_{y\in\Omega_2, r>0} r^{d(\frac{1}{u}- \frac{1}{p})} \left( \int_{S\cap \Omega_1\cap B(y,r)} \abs{F(x)}^p \d x \right)^{ \frac{1}{p}} \\
        &\leq \sup_{y\in\R, r>0} r^{d(\frac{1}{u}- \frac{1}{p})} \left( \int_{\R} \abs{[E(F\vert_{\Omega_1})](x)}^p \d x \right)^{\frac{1}{p}} \\
        &=\norm{ E(F\vert_{\Omega_1}) \sep \mathcal{M}^{u}_{p}(\R)}\\
        &\lesssim \norm{F\vert_{\Omega_1} \sep \mathcal{M}^{u}_{p}(\Omega_1)}.
    \end{align*}
    This completes the proof of (v).
    
    \textit{Step 5. }
    Since (vi) directly follows from \autoref{def:morrey_mod}, it remains to show (vii). We have
    \begin{align*}
        L &:= \norm{ \left( \int_{B(\cdot,R)} \abs{F(x)}^v \d x \right)^{\frac{1}{v}} \sep \mathcal{M}^{u}_{p}(\R)} 
        = \norm{ \left( \int_{B(0,R)} \abs{F(\cdot-h)}^v \d h \right)^{\frac{1}{v}} \sep \mathcal{M}^{u}_{p}(\R)}.
    \end{align*}
    Then, since $v\leq p$, Minkowski's integral inequality yields
    \begin{align*}
        L &= \sup_{y\in\R, r>0} r^{d(\frac{1}{u}- \frac{1}{p})} \left( \int_{B(y,r)}  \left( \int_{B(0,R)} \abs{F(x-h)}^v \d h \right)^{\frac{p}{v}} \d x \right)^{\frac{1}{p}} \\
        &\leq \sup_{y\in\R, r>0} r^{d(\frac{1}{u}- \frac{1}{p})} \left( \int_{B(0,R)}  \left( \int_{B(y,r)} \abs{F(x-h)}^p \d x \right)^{\frac{v}{p}} \d h \right)^{\frac{1}{v}}        
    \end{align*}
    such that
    \begin{align*}
        L &\leq \left( \int_{B(0,R)} \left[ \sup_{y\in\R, r>0} r^{d(\frac{1}{u}- \frac{1}{p})} \left( \int_{B(y,r)} \abs{F(x-h)}^p \d x \right)^{\frac{1}{p}} \right]^v \d h \right)^{\frac{1}{v}} \\
        &= \left( \int_{B(0,R)} \norm{ F(\cdot-h) \sep \mathcal{M}^{u}_{p}(\R) }^v \d h \right)^{\frac{1}{v}}
    \end{align*}
    and hence the translation invariance of $\mathcal{M}^{u}_{p}(\R)$ completes the proof.
\end{proof}

Later also the following observation will be important.

\begin{lem}\label{lem:avg}
    For $d\in\N$ let $\Omega\subseteq\R$ be a domain or $\R$ itself. Further let $0<p \leq u <\infty$, $0<v \leq \infty$ and $0 < r \leq R < \infty$. 
    Then for $f\in L_{\max\{1,p,v\}}^\loc(\Omega)$ we have
    $$
        \norm{\left(\int_{B(\cdot,R)\cap \Omega} \abs{f(y)}^v\d y\right)^{\frac{1}{v}} \sep \mathcal{M}^{u}_{p}(\Omega)}
        \sim \norm{\left(\int_{B(\cdot,r)\cap \Omega} \abs{f(y)}^v\d y\right)^{\frac{1}{v}} \sep \mathcal{M}^{u}_{p}(\Omega)}
    $$
    with constants independent of $f$.
\end{lem}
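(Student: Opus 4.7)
Writing $F_\rho(x):=\bigl(\int_{B(x,\rho)\cap\Omega}|f(y)|^v\,\mathrm{d}y\bigr)^{1/v}$ for $x\in\Omega$, the claim reads $\|F_R\|_{\mathcal{M}^u_p(\Omega)}\sim\|F_r\|_{\mathcal{M}^u_p(\Omega)}$. The ``$\gtrsim$'' direction is immediate from monotonicity: $r\leq R$ implies $B(x,r)\cap\Omega\subseteq B(x,R)\cap\Omega$, hence $F_r(x)\leq F_R(x)$ pointwise in $x\in\Omega$, and the Morrey quasi-norm respects this.

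For ``$\lesssim$'' I would carry out a covering-plus-translation argument. Fix a finite family $\{a_i\}_{i=1}^M\subset\R$ with $|a_i|\leq R$ and $M\leq c(d)(R/r)^d$ such that $B(0,R)\subseteq\bigcup_i B(a_i,r)$, and for $z\in\R$ set $\tilde F_r(z):=\bigl(\int_{B(z,r)\cap\Omega}|f|^v\,\mathrm{d}y\bigr)^{1/v}$, the natural extension of $F_r$ to all of $\R$. Translating the covering by $x\in\Omega$ and distinguishing the cases $v\geq 1$ (subadditivity of $t\mapsto t^{1/v}$) and $v<1$ (power-mean inequality) gives the pointwise bound $F_R(x)\leq C(M,v)\sum_{i=1}^M\tilde F_r(x+a_i)$ for $x\in\Omega$. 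The quasi-triangle inequality in $\mathcal{M}^u_p(\Omega)$, combined with \autoref{lem:tools_M}(ii) to bound each restriction $\tilde F_r(\cdot+a_i)|_\Omega$ by the $\mathcal{M}^u_p(\R)$-norm of $\tilde F_r(\cdot+a_i)$, and the translation invariance of $\mathcal{M}^u_p(\R)$, then yields $\|F_R\|_{\mathcal{M}^u_p(\Omega)}\lesssim \|\tilde F_r\|_{\mathcal{M}^u_p(\R)}$.

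The main obstacle is to control $\|\tilde F_r\|_{\mathcal{M}^u_p(\R)}$ by $\|F_r\|_{\mathcal{M}^u_p(\Omega)}$; the reverse is trivial, but this direction requires handling the tube $\{z\in\R\setminus\Omega:\distance{z}{\Omega}<r\}$, on which $\tilde F_r$ may be positive. I plan to cover this tube by balls $\{B(z_j,2r)\}_j$ centered at a maximal $r$-separated subset of $\Omega$ (so that $\{B(z_j,r/2)\}_j$ is pairwise disjoint and $\{B(z_j,3r)\}_j$ has bounded overlap); for $x\in B(z_j,2r)$, the inclusion $B(x,r)\subseteq B(z_j,3r)$ gives $\tilde F_r(x)\leq F_{3r}(z_j)$, while the continuity estimate $F_{3r}(z_j)\leq F_{7r/2}(z)$ holds for every $z\in B(z_j,r/2)\cap\Omega$. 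Splitting each Morrey ball $B(y,t)\subset\R$ with the aid of a nearby $y^*\in\Omega$ (so $B(y,t)\subseteq B(y^*,2t+r)$) and combining these estimates produces $\|\tilde F_r\|_{\mathcal{M}^u_p(\R)}\lesssim\|F_{cr}\|_{\mathcal{M}^u_p(\Omega)}$ for some dimensional constant $c>1$. Combining this with the bound of the previous paragraph gives the master inequality $\|F_R\|_{\mathcal{M}^u_p(\Omega)}\lesssim\|F_{cr}\|_{\mathcal{M}^u_p(\Omega)}$ whenever $r\leq R$; applying it with $r$ replaced by $r/c$ (valid since $r/c\leq r\leq R$) finally yields $\|F_R\|_{\mathcal{M}^u_p(\Omega)}\lesssim\|F_r\|_{\mathcal{M}^u_p(\Omega)}$, closing the proof.
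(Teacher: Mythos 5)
Your lower bound and your first reduction are sound and essentially parallel the paper's argument (which likewise reduces to $v=1$ via \autoref{lem:tools_M}(vi), covers $B(x,R)$ by $\sim(R/r)^d$ translates of a ball of radius $r/2$, and finishes with the quasi-triangle inequality and translation invariance of $\mathcal{M}^{u}_{p}(\R)$). The gap sits exactly in the step you yourself flag as the main obstacle: the inequality $\norm{\tilde F_r \sep \mathcal{M}^{u}_{p}(\R)} \lesssim \norm{F_{cr} \sep \mathcal{M}^{u}_{p}(\Omega)}$ is \emph{false} for general domains, so no covering argument can deliver it. Take $d=2$, $\Omega=(-10,10)\times(0,\varepsilon)$, $f\equiv 1$, $v=1$, $r=1$. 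Then $\tilde F_1\gtrsim\varepsilon$ on a set of measure $\gtrsim 1$ (e.g.\ on $[-9,9]\times[-\tfrac12,\tfrac12]$), whence $\norm{\tilde F_1 \sep \mathcal{M}^{u}_{p}(\R)}\gtrsim\varepsilon$; on the other hand $F_{c}\approx\varepsilon$ on $\Omega$ while $\abs{B(y,t)\cap\Omega}\lesssim\min\{t^2,t\varepsilon,\varepsilon\}$, which gives $\norm{F_{c} \sep \mathcal{M}^{u}_{p}(\Omega)}\approx\varepsilon^{1+\min\{1/p,\,2/u\}}=o(\varepsilon)$ as $\varepsilon\to 0$. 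The failure is already visible in your proposed mechanism: to pass from $F_{3r}(z_j)$ to an integral of $F_{7r/2}$ over $B(z_j,r/2)\cap\Omega$ you need $\abs{B(z_j,r/2)\cap\Omega}\gtrsim r^d$, i.e.\ a measure density condition on $\Omega$, which the paper explicitly refuses to assume (see the remark following \autoref{def:morrey_mod}). The unrestricted norm $\norm{\tilde F_r \sep \mathcal{M}^{u}_{p}(\R)}$ integrates over the whole $r$-tube around $\partial\Omega$, and this tube can carry far more mass of $\tilde F_r^{\,p}$ than $\Omega$ itself carries of $F_{cr}^{\,p}$.

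Consequently the detour through $\norm{\tilde F_r \sep \mathcal{M}^{u}_{p}(\R)}$ has to be avoided altogether, not merely repaired. The paper's device is to never discard the localization to $\Omega$: the covering estimate is written with \emph{two} indicator factors, $\chi_\Omega(x)\,\chi_{\Omega}(x+\tau)\int_{B(x+\tau,r)}\abs{Ef(y)}\d y$, where the shift $\tau$ is adjusted within a finite lattice of neighbouring shifts so that the shifted center lies in $\Omega$ whenever the corresponding small ball actually carries mass of $Ef$. After translation each summand is then, as a function of the new variable, supported in $\Omega$ and equal to the trivial extension of $F_r\vert_\Omega$ (enlarged radius), so \autoref{lem:tools_M}(i)--(ii) identify its $\mathcal{M}^{u}_{p}(\R)$-norm with $\norm{F_r \sep \mathcal{M}^{u}_{p}(\Omega)}$ up to constants. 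To salvage your write-up you would need to carry such a second cut-off $\chi_\Omega(\,\cdot\,+a_i)$ through your pointwise bound from the outset, rather than estimating $\tilde F_r(\,\cdot\,+a_i)\vert_\Omega$ by the global norm of $\tilde F_r$.
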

\begin{proof}
    Due to the monotonicity of the integral in $r$ it suffices to prove ``$\lesssim$''.
    In addition, \autoref{lem:tools_M}(vi) shows that if $v<\infty$, we can restrict ourselves to $v=1$. (If $v=\infty$, then one has to replace every integral by an essential supremum in the subsequent proof.) 
    Further note that there exist displacement vectors $w_k\in \Z$, $k\in\N$, such that
    for some $K=K(r,R,d)\in\N$,
    $$
        B(x,R) \subset \bigcup_{k=1}^{K} B\!\left(x+ \frac{r}{2} w_k , \frac{r}{2} \right),
        \qquad x\in\R.
    $$
    Therefore, \autoref{lem:tools_M}(ii) yields
    \begin{align}
        \norm{\int_{B(\cdot,R)\cap \Omega} \abs{f(y)}\d y \sep \mathcal{M}^{u}_{p}(\Omega)}
        &\sim \norm{\chi_\Omega \int_{B(\cdot,R)} \abs{Ef(y)}\d y \sep \mathcal{M}^{u}_{p}(\R)} \label{eq:proof_normswitch}\\
        &\leq \norm{\sum_{k=1}^K \chi_\Omega \int_{B\left(\,\cdot\,+ \frac{r}{2} w_k , \frac{r}{2} \right)} \abs{Ef(y)}\d y \sep \mathcal{M}^{u}_{p}(\R)}. \nonumber
    \end{align}
    Now, given $x\in\R$ and $k\in\N$, we have
    \begin{align*}
        &\chi_\Omega \int_{B(x+ \frac{r}{2}  w_k , \frac{r}{2} )} \abs{Ef(y)}\d y \\
        &\qquad\leq \sum_{e\in\{-1,0,1\}^d} \chi_\Omega(x) \, \chi_{\Omega}\!\left(x+ \frac{r}{2} [w_k+e] \right) \int_{B\left(x+ \frac{r}{2}  [w_k+e], r\right)} \abs{Ef(y)}\d y
    \end{align*}
    since the left-hand side is strictly positive only if $x\in\Omega$ and at least for one $e^\ast\in\{-1,0,1\}^d$ also $x+ r/2 [w_k+e^\ast]  \in \Omega$ (as $\supp Ef \subseteq \Omega$). 
    For this~$e^\ast$ we have 
    $$ 
        B\!\left(x+ \frac{r}{2}  w_k , \frac{r}{2} \right) \subset B\!\left(x+ \frac{r}{2} [w_k+e^\ast], r\right)
    $$ 
    Hence,
    \begin{align*}
        &\norm{\int_{B(\cdot,R)\cap \Omega} \abs{f(y)}\d y \sep \mathcal{M}^{u}_{p}(\Omega)} \\
        &\quad\lesssim \norm{\sum_{k=1}^K \sum_{e\in\{-1,0,1\}^d} \chi_\Omega(\,\cdot\,) \, \chi_{\Omega}\!\left(\,\cdot\,+ \frac{r}{2} [w_k+e] \right) \int_{B\left(\,\cdot\,+ \frac{r}{2}  [w_k+e], r \right)} \abs{Ef(y)}\d y \sep \mathcal{M}^{u}_{p}(\R)}\\
        &\quad\lesssim \sum_{k=1}^K \sum_{e\in\{-1,0,1\}^d} \norm{\chi_{\Omega}\!\left(\,\cdot\,+ \frac{r}{2} [w_k+e] \right) \int_{B\left(\,\cdot\,+  \frac{r}{2} [w_k+e], r \right)} \abs{Ef(y)}\d y \sep \mathcal{M}^{u}_{p}(\R)} \\
        &\quad\sim \norm{\chi_{\Omega}(\,\cdot\,) \int_{B(\,\cdot\,,r)} \abs{Ef(y)}\d y \sep \mathcal{M}^{u}_{p}(\R)},
    \end{align*}
    where for the last step we used the translation-invariance of $\mathcal{M}^{u}_{p}(\R)$. 
    Rewriting the last expression similiar to \eqref{eq:proof_normswitch} completes the proof.
\end{proof}

We complement some of the assertions of \autoref{lem:tools_M} by corresponding results for Triebel-Lizorkin-Morrey spaces.
For $S\subseteq\R$ and $\varepsilon>0$ we let $S_\varepsilon:=\{x\in\R \sep \distance{x}{S}<\varepsilon\}$ denote its $\varepsilon$-neighbourhood.
\begin{lem}\label{lem:tools_E}
    Let $0<p \leq u < \infty$, $0< q \leq \infty$, $s\in\re$, and let $\Omega,\Omega_1,\Omega_2\subseteq\R$ be  domains.
    \begin{enumerate}
        \item For all $G \in \mathcal{D}(\R)$ there exists $c_G>0$  such that
        $$
            \norm{G\vert_\Omega\cdot f \sep \mathcal{E}^{s}_{u,p,q}(\Omega)} 
            \leq c_G \norm{f \sep \mathcal{E}^{s}_{u,p,q}(\Omega)}, \qquad f\in \mathcal{E}^{s}_{u,p,q}(\Omega).
        $$

        \item Every affine-linear diffeomorphism $\Phi\colon\R\to\R$ yields an isomorphism $T_\Phi \colon f\mapsto f\circ \Phi$ of $\mathcal{E}^{s}_{u,p,q}(\Phi(\Omega))$ onto $\mathcal{E}^{s}_{u,p,q}(\Omega)$.
    
        \item Let $S\subseteq\R$ be such that $S_\varepsilon\cap \Omega_1= S_\varepsilon\cap \Omega_2$ . Then for all $F\in \mathcal{S}'(\R)$ with $\supp F\subseteq S$
        $$
            \norm{F\vert_{\Omega_1} \sep \mathcal{E}^{s}_{u,p,q}(\Omega_1)}
            \sim \norm{F\vert_{\Omega_2} \sep \mathcal{E}^{s}_{u,p,q}(\Omega_2)}.
        $$
        
        \item If $s>\sigma_p $, then $ \mathcal{E}^{s}_{u,p,q}(\Omega) \hookrightarrow \mathcal{M}^u_p(\Omega)$.
    \end{enumerate}
\end{lem}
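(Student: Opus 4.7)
The plan is to bootstrap all four statements from their counterparts on $\R$ via the extension-by-infimum definition of $\mathcal{E}^{s}_{u,p,q}(\Omega)$. For (i), I pick an extension $g\in\mathcal{E}^{s}_{u,p,q}(\R)$ of $f$ with quasi-norm arbitrarily close to $\norm{f\sep\mathcal{E}^{s}_{u,p,q}(\Omega)}$. The pointwise multiplier theorem for $\mathcal{E}^{s}_{u,p,q}(\R)$ with symbols from $\mathcal{D}(\R)$---a standard consequence of paraproduct techniques combined with \autoref{l_ineq1}, cf.\ \cite{ysy}---delivers $\norm{Gg\sep\mathcal{E}^{s}_{u,p,q}(\R)}\leq c_G\,\norm{g\sep\mathcal{E}^{s}_{u,p,q}(\R)}$, and $(Gg)|_\Omega=G|_\Omega\cdot f$. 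Passing to the infimum proves (i). The argument for (ii) is analogous: translations and non-degenerate dilations act as isomorphisms on $\mathcal{E}^{s}_{u,p,q}(\R)$ (translations commute with the Littlewood-Paley decomposition and dilations merely rescale it, both being compatible with \autoref{lem:tools_M}(iv) applied to $\mathcal{M}^u_p(\R)$), so $\norm{g\circ\Phi\sep\mathcal{E}^{s}_{u,p,q}(\R)}\sim_\Phi\norm{g\sep\mathcal{E}^{s}_{u,p,q}(\R)}$; combined with $(g\circ\Phi)|_\Omega=f\circ\Phi$ and the symmetric argument for $\Phi^{-1}$, (ii) follows.

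The main obstacle is (iii). I would fix a cutoff $\psi\in C^\infty(\R)$ with bounded derivatives of all orders, $\psi\equiv 1$ on $S_{\varepsilon/2}\supseteq\supp F$ and $\supp\psi\subseteq S_\varepsilon$, obtained by mollifying $\chi_{S_{3\varepsilon/4}}$ at a sufficiently small scale. For any admissible extension $G\in\mathcal{E}^{s}_{u,p,q}(\R)$ of $F|_{\Omega_1}$, the $C^\infty_b$-variant of the pointwise multiplier theorem on~$\R$ yields $\norm{\psi G\sep\mathcal{E}^{s}_{u,p,q}(\R)}\lesssim\norm{G\sep\mathcal{E}^{s}_{u,p,q}(\R)}$. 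The crucial step is to verify $(\psi G)|_{\Omega_2}=F|_{\Omega_2}$: for $\varphi\in\mathcal{D}(\Omega_2)$ (tacitly extended by zero to~$\R$) the product $\psi\varphi$ has support contained in $\supp\psi\cap\supp\varphi\subseteq S_\varepsilon\cap\Omega_2=S_\varepsilon\cap\Omega_1\subseteq\Omega_1$, so $\psi\varphi\in\mathcal{D}(\Omega_1)$, and
\begin{equation*}
    \langle\psi G,\varphi\rangle=\langle G,\psi\varphi\rangle=\langle F|_{\Omega_1},\psi\varphi\rangle=\langle F,\psi\varphi\rangle=\langle\psi F,\varphi\rangle=\langle F,\varphi\rangle,
\end{equation*}
the last equality using $\psi\equiv 1$ on $\supp F$. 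Hence $\psi G$ is an admissible extension of $F|_{\Omega_2}$; infimising over $G$ and exchanging the roles of $\Omega_1,\Omega_2$ completes (iii). The technical difficulty is precisely this distribution-theoretic identification, for which the cutoff $\psi$ is engineered to force every test function pairing onto the common set $S_\varepsilon\cap\Omega_1=S_\varepsilon\cap\Omega_2$.

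For (iv), I lift once more to~$\R$. Given $f\in\mathcal{E}^{s}_{u,p,q}(\Omega)$, pick a near-optimal extension $g\in\mathcal{E}^{s}_{u,p,q}(\R)$. The Morrey analogue of the classical embedding $F^{s}_{p,q}(\R)\hookrightarrow L_p(\R)$, namely $\mathcal{E}^{s}_{u,p,q}(\R)\hookrightarrow\mathcal{M}^{u}_{p}(\R)$ for $s>\sigma_p$ (provable via atomic or Littlewood-Paley decompositions; cf.\ \cite{ysy}), shows that $g\in\mathcal{M}^u_p(\R)$ is a regular distribution coinciding with $f$ almost everywhere on~$\Omega$. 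Applying \autoref{lem:tools_M}(ii) then yields $f\in\mathcal{M}^u_p(\Omega)$ with $\norm{f\sep\mathcal{M}^u_p(\Omega)}\lesssim\norm{g\sep\mathcal{M}^u_p(\R)}\lesssim\norm{g\sep\mathcal{E}^{s}_{u,p,q}(\R)}$, and passing to the infimum over admissible $g$ completes the embedding.
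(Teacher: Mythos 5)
Your proposal is correct and follows essentially the same route as the paper: all four parts are bootstrapped from the corresponding facts on $\R$ (pointwise multipliers, the affine diffeomorphism theorem, and the embedding $\mathcal{E}^{s}_{u,p,q}(\R)\hookrightarrow\mathcal{M}^{u}_{p}(\R)$ for $s>\sigma_p$) via near-optimal extensions and the restriction definition, and your cutoff argument for (iii) is precisely the test-function identification the paper carries out with a $\sigma\in\mathcal{D}(\R)$ equal to $1$ on $S$. The only cosmetic differences are that the paper cites Sawano's diffeomorphism theorem for (ii) rather than sketching a Littlewood-Paley argument (which for a general invertible matrix $A$ is not merely a rescaling of the decomposition), and that for (iv) it treats $p\leq 1$ separately in order to guarantee that extensions are regular distributions in $L_1^{\loc}(\R)$.
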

\begin{proof}
\textit{Step 1. }
    At first we prove (i). Again, w.l.o.g., we may assume $\Omega\neq \R$.
    For given $f\in \mathcal{E}^{s}_{u,p,q}(\Omega)$ we choose an extension $F\in \mathcal{E}^{s}_{u,p,q}(\R)$ with
    $$
        \norm{F \sep \mathcal{E}^{s}_{u,p,q}(\R)} \leq 2 \norm{f \sep \mathcal{E}^{s}_{u,p,q}(\Omega)}.
    $$
    Then $(G F)\vert_\Omega(\varphi)=(GF)(E\varphi)=F(G E\varphi)=f(G\vert_\Omega \varphi)=(G\vert_\Omega f)(\varphi)$ for all $\varphi\in\mathcal{D}(\Omega)$, i.e., $(G F)\vert_\Omega=G\vert_\Omega f$ in $\mathcal{D}'(\Omega)$.
    Therefore, with constants independent of $f$,
    \begin{align*}
        \norm{G\vert_\Omega\cdot f \sep \mathcal{E}^{s}_{u,p,q}(\Omega)} \leq \norm{G\cdot F \sep \mathcal{E}^{s}_{u,p,q}(\R)} \leq C_G \norm{F \sep \mathcal{E}^{s}_{u,p,q}(\R)} \leq 2\, C_G \norm{f \sep \mathcal{E}^{s}_{u,p,q}(\Omega)},
    \end{align*}
    where we used \cite[Theorem~1.5]{Saw10}, see also \cite[Theorem 6.1]{ysy}, for the second estimate.
    
    \textit{Step 2. }
    We show (ii). 
    Using \cite[Theorem~1.7]{Saw10} or \cite[Theorem 6.7]{ysy} we find that $T_\Phi\colon F\mapsto F\circ\Phi$ is an isomorphism on $\mathcal{E}^{s}_{u,p,q}(\R)$. Hence, 
    $F \in \mathcal{E}^{s}_{u,p,q}(\R)$ is equivalent to $G:=F\circ\Phi^{-1} \in \mathcal{E}^{s}_{u,p,q}(\R)$.
    Further, for $f \in \mathcal{E}^{s}_{u,p,q}(\Phi(\Omega))$ as well as $F \in \mathcal{E}^{s}_{u,p,q}(\R)$ we have
    $$
        (f\circ\Phi)(\varphi) = f\left(\frac{\varphi\circ \Phi^{-1}}{\abs{\det A}}\right)
        \quad\text{and}\quad
        F\vert_\Omega (\varphi) = [(F \circ \Phi^{-1})\vert_{\Phi(\Omega)}\circ \Phi)](\varphi) = G\vert_{\Phi(\Omega)}\left(\frac{\varphi\circ \Phi^{-1}}{\abs{\det A}}\right)
    $$
    for all $\varphi\in\mathcal{D}(\Omega)$ such that $f\circ\Phi=F\vert_\Omega$ in $\mathcal{D}'(\Omega)$ is equivalent to $f=G\vert_{\Phi(\Omega)}$ in $\mathcal{D}'(\Phi(\Omega))$.
    Therefore, \cite[Theorem~1.7]{Saw10} implies
    \begin{align*}
        \norm{f\circ\Phi \sep \mathcal{E}^{s}_{u,p,q}(\Omega)} 
        &= \inf_{\substack{F\in \mathcal{E}^{s}_{u,p,q}(\R)\\ f\circ\Phi=F\vert_\Omega \text{ in } \mathcal{D}'(\Omega)}} \norm{F \sep \mathcal{E}^{s}_{u,p,q}(\R)} \\
        &= \inf_{\substack{G\in \mathcal{E}^{s}_{u,p,q}(\R)\\ f=G\vert_{\Phi(\Omega)} \text{ in } \mathcal{D}'(\Phi(\Omega))}} \underbrace{\norm{G\circ\Phi \sep \mathcal{E}^{s}_{u,p,q}(\R)}}_{\lesssim \norm{G \sep \mathcal{E}^{s}_{u,p,q}(\R)}} 
        \lesssim \norm{f \sep \mathcal{E}^{s}_{u,p,q}(\Phi(\Omega))}.
    \end{align*}
    
    \textit{Step 3. }
    In order to prove (iii), let $\sigma\in\mathcal{D}(\R)$ be such that $\sigma\equiv 1$ on $S$ with $\supp \sigma\subset S_\varepsilon$. 
    Then for all $F\in\mathcal{S}'(\R)$ with $\supp F\subseteq S$, we have $F=\sigma F$ in $\mathcal{S}'(\R)$.
    Now assume $F\vert_{\Omega_2}\in \mathcal{E}^{s}_{u,p,q}(\Omega_2)$ and let $G\in \mathcal{E}^{s}_{u,p,q}(\R)$ with $F\vert_{\Omega_2}=G\vert_{\Omega_2}$ in $\mathcal{D}'(\Omega_2)$ be arbitrarily fixed.
    Then (i) shows that $\widetilde{G}:=\sigma G \in \mathcal{E}^{s}_{u,p,q}(\R)$. 
    If $E_j\colon \mathcal{D}(\Omega_j)\to\mathcal{S}(\R)$, $j=1,2$, denote the trivial extensions, then for all $\varphi\in\mathcal{D}(\Omega_1)$ we have $[\sigma (E_1\varphi)]\vert_{\Omega_2}\in \mathcal{D}(\Omega_2)$,
    $$
        G\vert_{\Omega_2}([\sigma (E_1\varphi)]\vert_{\Omega_2})
        = G(E_2[\sigma (E_1\varphi)]\vert_{\Omega_2})
        = G(\sigma E_1\varphi)
        = (\sigma G)(E_1\varphi) 
        = \widetilde{G}\vert_{\Omega_1}(\varphi)
    $$
    and similarly $F\vert_{\Omega_2}([\sigma (E_1\varphi)]\vert_{\Omega_2}) = (\sigma F)(E_1\varphi) = F(E_1\varphi) = F\vert_{\Omega_1}(\varphi)$.
    So we conclude that $F\vert_{\Omega_1} = \widetilde{G}\vert_{\Omega_1}$ in $\mathcal{D}'(\Omega_1)$ and hence using (i) with $\Omega:=\R$ we find
    \begin{align*}
        \norm{F\vert_{\Omega_1} \sep \mathcal{E}^{s}_{u,p,q}(\Omega_1)}  \leq \norm{\widetilde{G} \sep \mathcal{E}^{s}_{u,p,q}(\R)} 
        = \norm{\sigma G \sep \mathcal{E}^{s}_{u,p,q}(\R)} 
        \leq c_\sigma \norm{G \sep \mathcal{E}^{s}_{u,p,q}(\R)}.
    \end{align*}
    Since we have $G\in \mathcal{E}^{s}_{u,p,q}(\R)$ with $F\vert_{\Omega_2}=G\vert_{\Omega_2}$ in $\mathcal{D}'(\Omega_2)$ was arbitrary, this shows $\norm{F\vert_{\Omega_1} \sep \mathcal{E}^{s}_{u,p,q}(\Omega_1)}\lesssim \norm{F\vert_{\Omega_2} \sep \mathcal{E}^{s}_{u,p,q}(\Omega_2)}$
    and the proof of (iii) is complete.
    
    \textit{Step 4. } It remains to prove (iv). 
    For this purpose, we first discuss $\Omega=\R$ and distinguish two cases. For $p>1$ the assumption $s>\sigma_p$ implies $s>0$ and hence
    $$
        \mathcal{E}^{s}_{u,p,q}(\R) 
        \hookrightarrow \mathcal{E}^{0}_{u,p,2}(\R) 
        = \mathcal{M}^u_p(\R),
    $$
    due to \autoref{l_bp1}(vi). Here we also could refer to \cite[Theorem 3.2]{HaMoSk2}. 
    
    If otherwise $p \leq 1$, at first we recall that $\mathcal{E}^{s}_{u,p,q}(\R) \hookrightarrow \mathcal{E}^{s}_{u,p,\infty}(\R)$. Since $s>\sigma_p$ implies $s>\sigma_p \cdot p/u$ the latter space is contained in $L_1^\loc(\R)$ and hence in $L_{p}^\loc(\R)$, see \cite[Theorem 3.3]{HaMoSk}. 
    Thus, \cite[Theorem 1.2]{Ho1} implies $\mathcal{E}^{s}_{u,p,\infty}(\R) \hookrightarrow \mathcal{M}^u_p(\R)$. Notice that also here we can refer to \cite[Theorem 3.2]{HaMoSk2}. 
    
    Now let $\Omega\subsetneq \R$. Then \autoref{lem:tools_M}(ii) and the previous findings yield
    \begin{align*}
        \norm{f \sep \mathcal{M}^{u}_{p}(\Omega)}  \lesssim  \inf_{\substack{F\in\mathcal{E}^{s}_{u,p,q}(\R)\\ F\vert_\Omega = f \text{ in } \mathcal{D}'(\Omega)}} \norm{F \sep \mathcal{E}^{s}_{u,p,q}(\R)} = \norm{f \sep \mathcal{E}^{s}_{u,p,q}(\Omega)},
    \end{align*}
    where we used that, due to \cite[Theorem 3.3]{HaMoSk}, $\mathcal{E}^{s}_{u,p,q}(\R)\subset L_1^\loc(\R)$ such that $F|_\Omega$ is regular and its a.e.\ pointwise equality with $f$ is equivalent to equality in the sense of~$\mathcal{D}'(\Omega)$.
\end{proof}

\subsection{Quasi-norms of Interest}\label{sect:quasi-norms}
In this paper, we aim at measuring smoothness of functions in terms of quasi-norms based on higher order differences and local oscillations.

Let $d\in\N$. Given a function $ f \colon \mathbb{R}^d \rightarrow \mathbb{C}$ its first order difference of step length $h\in\R$ is given by the mapping $x\mapsto \Delta_{h}^{1}f (x) := f ( x + h ) - f (x)$ on $\R$. Moreover, for $N\in\N\setminus\{1\}$ we let
\begin{align}\label{eq_diff_def}
    \Delta_{h}^{N}f (x) := \left  (\Delta_{h} ^1 \left ( \Delta_{h} ^{N-1}f \right  )\right ) (x) \, , \qquad x \in \R\, . 
\end{align}
Then it is easily seen that for every $N\in\N$ and $h\in\R$ there holds
\begin{align}\label{eq:Delta}
    \Delta_{h}^{N}f (x) = \sum_{k=0}^N (-1)^{N-k} \binom{N}{k} \,f(x+kh), \qquad x\in\R.
\end{align}
In addition, for $N\in\N$ let $\mathcal{P}_{N-1}$ denote the set of all $d$-variate polynomials with total degree strictly less than $N$ and let $\Omega \subseteq \R$ be a domain or $\R$ itself.
Then for $0<v\leq\infty$ the local oscillation with radius $t>0$ of $f \in L_v^{\loc}(\Omega)$ in $x\in\Omega$ is given by
\begin{align*}
    \osc_{v,\Omega}^{N-1}f(x,t) 
    := \begin{cases}
        \displaystyle \inf_{P \in \mathcal{P}_{N-1}} \Big( t^{-d} \int_{B(x,t) \cap \Omega} \abs{f(y) - P(y)}^{v} \d y \Big)^{\frac{1}{v}} & \text{if}\quad v<\infty,\\
        \displaystyle\inf_{P\in\mathcal{P}_{N-1}} \esssup_{y\in B(x,t)\cap \Omega} \abs{f(y)-P(y)} & \text{if}\quad v=\infty.
    \end{cases}
\end{align*}
If $\Omega=\R$, we simply write $\osc_{v}^{N-1}f:=\osc_{v,\R}^{N-1}f$.

Now we are well-prepared to introduce the quasi-(semi-)norms which will turn out to characterize Triebel-Lizorkin-Morrey spaces $\mathcal{E}^{s}_{u,p,q}(\Omega)$. They generalize respective quantities for Triebel-Lizorkin spaces $F^s_{p,q}(\Omega)$ in a natural way; cf.~\cite[Section 5.2]{Tr92}.
\begin{defi}[New quasi-(semi-)norms on $\mathcal{E}^{s}_{u,p,q}$]
\label{defi:norms}
    For $d\in\N$ let $\Omega \subseteq \R$ be a domain or~$\R$ itself.
    Then for $N\in\N$, $0<p\leq u < \infty$, $0 < q,T,v \leq \infty$, $0<R<\infty$, and $f\in L_{\max\{1,p,v\}}^\loc(\Omega)$ we let
    \begin{align*}
        \abs{f}^{(T,v,N)}_{\osc,\Omega} 
        := \norm{\bigg( \int_{0}^{T} \big[ t^{-s} \, \osc_{v,\Omega}^{N-1} f(\cdot,t) \big]^{q} \frac{\d t}{t} \bigg)^{\frac{1}{q}} \sep \mathcal{M}^{u}_{p}(\Omega)}
    \end{align*}
    (if $\Omega=\R$, we simply write $\abs{f}^{(T,v,N)}_{\osc}:=\abs{f}^{(T,v,N)}_{\osc,\R}$),
    as well as
    \begin{align*}
	   \norm{f \sep \mathcal{E}^{s}_{u,p,q}(\Omega)}^{(T,v,N)}_\osc 
	   &:= \norm{ f \sep \mathcal{M}^{u}_{p}(\Omega)} + \abs{f}^{(T,v,N)}_{\osc,\Omega},\\
	   \norm{f \sep \mathcal{E}^{s}_{u,p,q}(\Omega)}^{(R,T,v,N)}_\osc
	   &:= \norm{\Big( \int_{B(\,\cdot\,,R)\cap \Omega} \abs{f(y)}^{v} \d y \Big )^{\frac{1}{v}} \sep \mathcal{M}^{u}_{p}(\Omega)} + \abs{f}^{(T,v,N)}_{\osc,\Omega}.
    \end{align*}
    Likewise, using $V^{N}(x,t) := \{ h \in \mathbb{R}^d \sep \abs{h} < t \ \text{and} \ x + \ell h \in \Omega \ \text{for} \ 0 \leq \ell \leq N \}$, we let
    \begin{align*}
        \abs{f}^{(T,v,N)}_{\Delta,\Omega} 
        := \norm{\bigg( \int_{0}^{T} \bigg[ t^{-s} \Big( t^{-d} \int_{V^{N}(\,\cdot\,,t)} \abs{\Delta^{N}_{h}f(\cdot)}^v \d h \Big)^{\frac{1}{v}} \bigg]^q \frac{\d t}{t} \bigg)^{\frac{1}{q}} \sep \mathcal{M}^{u}_{p}(\Omega) },
    \end{align*}
    ($\abs{f}^{(T,v,N)}_{\Delta}:=\abs{f}^{(T,v,N)}_{\Delta,\R}$)
    and define $\norm{f \sep \mathcal{E}^{s}_{u,p,q}(\Omega)}^{(T,v,N)}_\Delta$ as well as $\norm{f \sep \mathcal{E}^{s}_{u,p,q}(\Omega)}^{(R,T,v,N)}_\Delta$ correspondingly.
    In any case, for $ q = \infty $ and/or $ v = \infty$, the usual modifications are made.
\end{defi}

\section{Characterizations of \texorpdfstring{$\mathcal{E}^{s}_{u,p,q}(\R)$}{Esupq(Rd)}}
\label{sect:characterizations_Rd}
In this section we prove new characterizations of Triebel-Lizorkin-Morrey spaces~$\mathcal{E}^{s}_{u,p,q}(\R)$ on $\R$ in terms of higher order differences and local oscillations.

\subsection{Differences on \texorpdfstring{$\R$}{Rd}}\label{sect:diff_Rd}
Note that for the spaces $\mathcal{E}^{s}_{u,p,q}(\mathbb{R}^d)$ defined on $\mathbb{R}^d$ characterizations in terms of differences already exist. 
Indeed, the following result is a simple consequence of \cite[Theorem 4.2]{Ho1}.

\begin{satz}\label{thm:diff_Rd}
    Let $d,N \in \mathbb{N}$, $0< p \leq u < \infty$, $0 < q,v \leq \infty$, $0<R<\infty$, and $s\in\mathbb{R}$ with
    \begin{align*}
		d\, \max \left\{  0, \frac{1}{p} - 1, \frac{1}{q} - 1, \frac{1}{p} - \frac{1}{v}, \frac{1}{q} - \frac{1}{v} \right\} < s < N.
	\end{align*}
	Then for all $1\leq T \leq \infty$
	\begin{align}
        \mathcal{E}^{s}_{u,p,q}(\R) 
        &= \left\{ f\in L_{\max\{1,p,v\}}^\loc(\R) \sep \norm{f \sep \mathcal{E}^{s}_{u,p,q}(\R)}^{(T,v,N)}_\Delta < \infty \right\} \label{eq:diff_Rd}\\ 
        &= \left\{ f\in L_{\max\{1,p,v\}}^\loc(\R) \sep \norm{f \sep \mathcal{E}^{s}_{u,p,q}(\R)}^{(R,T,v,N)}_\Delta < \infty \right\} \nonumber
    \end{align}
    and the quasi-norms $\norm{\,\cdot \sep \mathcal{E}^{s}_{u,p,q}(\mathbb{R}^{d})}$, $\norm{\,\cdot \sep \mathcal{E}^{s}_{u,p,q}(\R)}^{(T,v,N)}_\Delta$ and $\norm{\,\cdot \sep \mathcal{E}^{s}_{u,p,q}(\R)}^{(R,T,v,N)}_\Delta$ are mutually equivalent on $L_{\max\{1,p,v\}}^\loc(\R)$. 
\end{satz}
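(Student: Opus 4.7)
The plan is to derive the statement from \cite[Theorem 4.2]{Ho1}, which under hypothesis \eqref{cond_on_s2} already establishes an intrinsic difference characterization of $\mathcal{E}^{s}_{u,p,q}(\R)$ in one canonical form. Starting from that core equivalence, three routine parameter adjustments remain to be carried out: (a) freedom in the radius $R\in(0,\infty)$ of the ball term, (b) freedom in the truncation parameter $T\in[1,\infty]$ of the difference integral, and (c) the interchange between the Morrey norm $\norm{f \sep \mathcal{M}^u_p(\R)}$ and the ball-averaged $v$-norm separating $\norm{\cdot \sep \mathcal{E}^{s}_{u,p,q}(\R)}^{(T,v,N)}_{\Delta}$ from $\norm{\cdot \sep \mathcal{E}^{s}_{u,p,q}(\R)}^{(R,T,v,N)}_{\Delta}$.

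For (a), \autoref{lem:avg} directly produces equivalence of ball terms for different radii, with constants depending only on the parameters. For (b), the monotonicity $\abs{f}^{(T_1,v,N)}_{\Delta} \leq \abs{f}^{(T_2,v,N)}_{\Delta}$ for $T_1 \leq T_2$ reduces matters to controlling the tail $t\in[1,\infty)$. Unfolding $\Delta^{N}_{h}f$ via \eqref{eq:Delta}, applying the triangle inequality together with the translation invariance of $\mathcal{M}^u_p(\R)$, and exploiting $\int_1^{\infty} t^{-sq}\, \d t /t < \infty$ (which holds since $s>0$), one bounds this tail by a constant multiple of the ball term already present in the quasi-norm. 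For (c), condition \eqref{cond_on_s2} forces $s > \sigma_p$, so \autoref{lem:tools_E}(iv) provides the embedding $\mathcal{E}^{s}_{u,p,q}(\R) \hookrightarrow \mathcal{M}^u_p(\R)$ and hence $\norm{f \sep \mathcal{M}^u_p(\R)} \lesssim \norm{f \sep \mathcal{E}^{s}_{u,p,q}(\R)}$. Conversely, \autoref{lem:tools_M}(vii) bounds the ball term by the Morrey norm whenever $v \leq p$; for $v>p$, one instead subtracts a local polynomial best-approximant of degree $<N$, splitting the ball term into an oscillation piece controlled by $\abs{f}^{(T,v,N)}_{\Delta}$ and a polynomial piece whose Morrey norm is absorbed by $\norm{f \sep \mathcal{M}^u_p(\R)}$ via the embedding above.

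The main obstacle I expect is precisely the case $v>p$, where the Morrey and ball-average quantities are not directly comparable by a Jensen-type estimate and one must blend the smoothness information encoded in the differences seminorm with the embedding into $\mathcal{M}^u_p(\R)$. All remaining bookkeeping is standard within the quasi-Banach framework and uses the $\tau$-triangle inequality from \autoref{l_bp1}(iii) throughout.
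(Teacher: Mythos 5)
Your overall strategy (reduce everything to \cite[Theorem 4.2]{Ho1} and then adjust $R$, $T$, and the leading term) is the same as the paper's, which in fact reads \cite[Theorem~4.2]{Ho1} as already covering every $T\in[1,\infty]$ and disposes of the $R$-variant by noting that the cited proof can be modified. However, there are two concrete problems with your write-up. First, you treat the theorem purely as a quasi-norm equivalence and never address the set identity \eqref{eq:diff_Rd}, which is the actual content of the paper's argument: one must show that every $f\in\mathcal{E}^{s}_{u,p,q}(\R)$ lies in $L^{\loc}_{\max\{1,p,v\}}(\R)$, since \cite[Theorem 4.2]{Ho1} only operates on $L^{\loc}_{\min\{p,q\}}(\R)$. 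Your embedding $\mathcal{E}^{s}_{u,p,q}(\R)\hookrightarrow\mathcal{M}^u_p(\R)$ settles this for $p>1$, but for $p\le 1$ you need $f\in L_1^{\loc}(\R)$, which does not follow from anything you invoke; the paper gets it from $s>\sigma_p\ge\sigma_p\,p/u$ together with \cite[Theorem~3.3]{HaMoSk}.

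Second, your justification of the tail estimate for $t\in[1,\infty)$ is insufficient. The quantity $\big(t^{-d}\int_{B(0,t)}\abs{\Delta^N_hf(x)}^v\d h\big)^{1/v}$ is \emph{not} uniformly comparable in $t$ to the fixed-radius ball term: after unfolding via \eqref{eq:Delta} it becomes a $v$-average of $f$ over $B(x,Nt)$, which grows with $t$, and one must cover $B(x,2^j)$ by $\sim 2^{jd}$ unit balls and sum $2^{-jsq}$ against that count inside the quasi-normed Morrey space. This is exactly Substep~1b in the proof of \autoref{thm_osc_Rda=2}, and the geometric series converges precisely because $s>d\max\{0,\tfrac1p-\tfrac1v,\tfrac1q-\tfrac1v\}$ (with the $\mu$-triangle inequality, $\mu=\min\{p,q,v\}$); the condition $s>0$ alone is not enough --- e.g.\ for $v=\infty$ the tail is only controlled because $s>d\max\{1/p,1/q\}$. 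Your sketch for the case $v>p$ in part (c) is also thinner than it looks: controlling a single-scale $v$-oscillation at radius $R$ by the difference seminorm truncated at $T$ requires a multiscale/Whitney-type argument when $R$ is large relative to $T$, not just a one-line subtraction of a polynomial.
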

Note that, in particular, \autoref{thm:diff_Rd} yields part (i) of our main \autoref{thm_main_2}.
\begin{proof}
    \emph{Step 1. } In \cite[Theorem 4.2]{Ho1} it has been shown that a function $f \in L_{\min\{p,q\}}^\loc(\R)$ belongs to $\mathcal{E}^{s}_{u,p,q}(\R)$ if and only if $f \in L_v^\loc(\R)$ and
    $\norm{f \sep \mathcal{E}^{s}_{u,p,q}(\R)}^{(T,v,N)}_\Delta$ is finite with $\norm{\,\cdot \sep \mathcal{E}^{s}_{u,p,q}(\R)}^{(T,v,N)}_\Delta$ being equivalent to $\norm{\,\cdot \sep \mathcal{E}^{s}_{u,p,q}(\R)}$ on $L_{\min\{p,q\}}^\loc(\R)$.
    Since $L_{\max\{1,p,v\}}^\loc(\R) \subset L_{\min\{p,q\}}^\loc(\R)$ it thus suffices to prove \eqref{eq:diff_Rd}.
    
    \emph{Step 2. }
    We show \eqref{eq:diff_Rd}. To this end, note that $f\in L_{\max\{1,p,v\}}^\loc(\R)$ implies that $f$ belongs to $L_{\min\{p,q\}}^\loc(\R) \cap L_v^\loc(\R)$. 
    Hence, if $\norm{f \sep \mathcal{E}^{s}_{u,p,q}(\R)}^{(T,v,N)}_\Delta < \infty$, then Step~1 yields $f\in \mathcal{E}^{s}_{u,p,q}(\R)$ which proves ``$\supseteq$'' in~\eqref{eq:diff_Rd}.
    
    For the converse direction, we let $f\in \mathcal{E}^{s}_{u,p,q}(\R)$.
    If $p\leq 1$, then our assumptions imply that $s > \sigma_p \geq \sigma_p \, \frac{p}{u}$ which in turn yields $f\in L_1^\loc(\R)$, see \cite[Theorem 3.3]{HaMoSk}.
    Therefore, we have $f\in L_{\min\{p,q\}}^\loc(\R)$ such that Step~1 implies $f\in L_v^\loc(\R)$ and that $\norm{f \sep \mathcal{E}^{s}_{u,p,q}(\R)}^{(T,v,N)}_\Delta$ is finite. 
    Since $\max\{1,p,v\}=\max\{1,v\}$ if $p\leq 1$, we have shown~``$\subseteq$'' in \eqref{eq:diff_Rd} for this case.
    If otherwise $p>1$, then \autoref{lem:tools_E}(iv) implies 
    $$
        \mathcal{E}^{s}_{u,p,q}(\R) \hookrightarrow \mathcal{M}^u_p(\R) 
        \subset L_p^\loc(\R) \subseteq L_{\min\{p,q\}}^\loc(\R)
    $$
    such that we can apply Step~1 to conclude $f\in L_v^\loc(\R)$ and $\norm{f \sep \mathcal{E}^{s}_{u,p,q}(\R)}^{(T,v,N)}_\Delta < \infty$.
    As $\max\{1,p,v\}=\max\{p,v\}$ for $p> 1$, the proof of ``$\subseteq$'' in \eqref{eq:diff_Rd} is complete.
    
    \emph{Step 3. } In order to complete the proof, we argue similarly based on a modified version of \cite[Theorem 4.2]{Ho1} with $\norm{\big( \int_{B(\,\cdot\,,R)} \abs{f(y)}^{v} \d y \big )^{\frac{1}{v}} \sep \mathcal{M}^{u}_{p}(\R)}$ in place of $\norm{f \sep \mathcal{M}^u_p(\R)}$ which in turn requires a corresponding modification of \cite[Proposition 4.1]{Ho1}.
    However, as this can be done easily we omit the details.
\end{proof}

\begin{rem}\label{hist_rem_diff1}
    In the literature there exist several modified versions of \cite[Theorem 4.2]{Ho1} and hence of \autoref{thm:diff_Rd}. 
    Here we want to refer to \cite[Section 4.3.1]{ysy} and \cite[Section 3.9.2]{Si1} at least. 
    A characterization for Triebel-Lizorkin-Morrey spaces in terms of differences using more restrictive conditions on the parameters also can be found in \cite[Theorem 4.1]{Dri1}.
\end{rem}

Results such as \autoref{thm:diff_Rd} sometimes are called $v$-mean characterizations of higher order differences. In the special case $v=1$ we obtain so-called ball mean characterizations. Moreover the choice $v=q$ can be used to prove Stein-Strichartz characterizations. Much more details concerning this topic can be found in \cite{Ho1} and \cite{H21}. 
There also a discussion concerning the necessity of the conditions on the parameters in \autoref{thm:diff_Rd} can be found.

\subsection{Oscillations on \texorpdfstring{$\R$}{Rd}}\label{re_sec_osc_Rd}
Here our main goal is to describe Triebel-Lizorkin-Morrey spaces $\mathcal{E}^{s}_{u,p,q}(\mathbb{R}^{d})$ in terms of local oscillations. 
In the literature there already exist some first characterizations of this type, e.g., based on the theory developed by Hedberg and Netrusov~\cite{HN}.
Similarly, we obtain the following result.

\begin{satz}\label{thm:osc_clubsuit}
    Let $d,N \in \mathbb{N}$, $s\in\mathbb{R}$, $0< p \leq u < \infty$, and $0 < q,v \leq \infty$ such that
	\begin{align*}
		d\, \max\! \left\{  0 , \frac{1}{p} - 1 , \frac{1}{q} - 1  ,  \frac{1}{p} - \frac{1}{v} ,  \frac{1}{q} - \frac{1}{v}  \right\}  < s < N.
	\end{align*}
    Then
    \begin{align*}
        \mathcal{E}^{s}_{u,p,q}(\R) 
        = \left\{ f\in L_{\max\{1,p,v\}}^\loc(\R) \sep \norm{f \sep \mathcal{E}^{s}_{u,p,q}(\R)}^{(\clubsuit)}_\osc<\infty \right\},
    \end{align*}
    where
    \begin{align*}
        \norm{f \sep \mathcal{E}^{s}_{u,p,q}(\R)}^{(\clubsuit)}_\osc 
        & := \Bigg\Vert \Bigg[\Big(\int_{B(\,\cdot\,,1)} \abs{f(y)}^v \d y \Big)^{\frac{q}{v}} \\
        &\qquad  + \sum_{j=1}^{\infty} 2^{jq(s + \frac{d}{v})} \!\inf_{P \in \mathcal{P}_{N-1}} \!\!\Big( \int_{B(\,\cdot\,,2^{-j}) } \abs{f(y) - P(y)}^v\d y \Big)^{\frac{q}{v}} \Bigg]^{\frac{1}{q}} \;\Bigg|\; \mathcal{M}^u_p(\R) \Bigg\Vert
    \end{align*}
    (with the usual modifications if $q = \infty$ and/or $v = \infty$) provides an equivalent quasi-norm.
\end{satz}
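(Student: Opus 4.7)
I would reduce \autoref{thm:osc_clubsuit} to the difference characterization in \autoref{thm:diff_Rd}, using (a) a dyadic discretization of the oscillation integral and (b) the classical Whitney--Brudnyi/Marchaud machinery that ties local polynomial best approximations to averaged higher-order differences.

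For the discretization, the elementary monotonicity bounds
$$
    \osc_{v}^{N-1}\!f(x, 2^{-j-1}) \,\leq\, 2^{d/v}\, \osc_{v}^{N-1}\!f(x, t) \,\leq\, 2^{2d/v}\, \osc_{v}^{N-1}\!f(x, 2^{-j}), \quad t \in [2^{-j-1}, 2^{-j}],
$$
(immediate from $B(x,t) \subseteq B(x, 2^{-j})$ and the scaling factor $t^{-d}$) give the pointwise equivalence
$\int_0^1 [t^{-s} \osc_{v}^{N-1}\!f(x, t)]^q \frac{\d t}{t} \sim \sum_{j\geq 0} 2^{jqs}\, [\osc_{v}^{N-1}\!f(x, 2^{-j})]^q$. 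The scaling identity
$[\osc_{v}^{N-1}\!f(x, 2^{-j})]^q = 2^{jdq/v}\, \inf_{P\in\mathcal{P}_{N-1}}(\int_{B(x, 2^{-j})} |f(y)-P(y)|^v \d y)^{q/v}$ then brings the dyadic sum into the form appearing inside $\norm{f \sep \mathcal{E}^{s}_{u,p,q}(\R)}^{(\clubsuit)}_{\osc}$; the $j = 0$ term is controlled by $(\int_{B(\cdot,1)} |f(y)|^v \d y)^{q/v}$ via the trivial choice $P \equiv 0$. Taking $\mathcal{M}^{u}_{p}(\R)$-norms gives $\norm{f \sep \mathcal{E}^{s}_{u,p,q}(\R)}^{(\clubsuit)}_{\osc} \sim \norm{f \sep \mathcal{E}^{s}_{u,p,q}(\R)}^{(1,1,v,N)}_{\osc}$ in the notation of \autoref{defi:norms}.

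Next I would pass from $\norm{\,\cdot\,}^{(1,1,v,N)}_{\osc}$ to $\norm{\,\cdot\,}^{(1,1,v,N)}_{\Delta}$ using two classical local estimates: the Whitney--Brudnyi inequality, which bounds $\osc_{v}^{N-1}\!f(x,t)$ by $N$-th order differences of $f$ averaged over balls of radius $\sim t$, and its essentially converse Marchaud-type iteration, which controls such averaged differences by a geometric sum of oscillations at scales $\leq t$. Taking $\mathcal{M}^{u}_{p}(\R)$-quasi-norms and summing the resulting geometric tails produces convergence exactly under the constraints $s > d\max\{1/q-1,1/q-1/v\}$ from~\eqref{cond_on_s}, while the constraints involving $1/p$ enter when applying the Tang--Xu vector-valued Morrey maximal inequality of \autoref{l_ineq1} to the intermediate maximal functions. \autoref{thm:diff_Rd} then identifies the set of $f \in L^{\loc}_{\max\{1,p,v\}}(\R)$ with $\norm{f \sep \mathcal{E}^{s}_{u,p,q}(\R)}^{(1,1,v,N)}_{\Delta} < \infty$ as $\mathcal{E}^{s}_{u,p,q}(\R)$, with equivalent quasi-norms; combining with the previous step completes the proof.

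The principal technical burden lies in the second step: realizing the Whitney--Brudnyi/Marchaud equivalence in a form sharp enough to survive the passage to the Morrey quasi-norm, rather than at the plain $L_p$ level familiar from the classical Triebel--Lizorkin theory. This is precisely where the full strength of the parameter conditions~\eqref{cond_on_s} (together with \autoref{l_ineq1}) is needed.
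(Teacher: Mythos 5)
Your route is genuinely different from the paper's. The paper proves \autoref{thm:osc_clubsuit} by verifying that $\mathcal{E}^{s}_{u,p,q}(\R)$ fits the axiomatic framework of Hedberg and Netrusov \cite{HN} (the verification is quoted from \cite[Propositions~3.8 and 3.9]{Ho1}) and then invoking \cite[Theorem~1.1.14(iii)]{HN}, which directly yields the equivalence of $\norm{\,\cdot \sep \mathcal{E}^{s}_{u,p,q}(\R)}^{(\clubsuit)}_{\osc}$ with the Fourier-analytic quasi-norm; only the bookkeeping reducing the a priori class $L^{\loc}_{\min\{p,q\}}(\R)$ to $L^{\loc}_{\max\{1,p,v\}}(\R)$ (Step~2 of the proof of \autoref{thm:diff_Rd}) is done by hand. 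You instead derive the theorem from the difference characterization \autoref{thm:diff_Rd} via dyadic discretization plus a Whitney/Marchaud comparison of oscillations and averaged differences. Your first step --- the pointwise discretization and the resulting identification $\norm{\,\cdot\,}^{(\clubsuit)}_{\osc}\sim\norm{\,\cdot\,}^{(1,1,v,N)}_{\osc}$ --- is correct, and is in fact precisely Substep~1a of the paper's proof of \autoref{thm_osc_Rda=2}, carried out there \emph{after} \autoref{thm:osc_clubsuit} rather than inside it; in your ordering, \autoref{thm_osc_Rda=2} (for $T\le 1$) would become an immediate corollary. There is no circularity, since \autoref{thm:diff_Rd} rests on the independent result \cite[Theorem~4.2]{Ho1}.

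The caveat is that your second step black-boxes exactly the hardest part. The direction ``averaged differences $\lesssim$ oscillations'' is elementary modulo the telescoping projection argument, and the paper performs it explicitly in Steps~1--2 of the proof of \autoref{prop:osc_special}. The converse direction, however, requires an averaged Whitney inequality on balls in which $\osc_{v}^{N-1}f(x,t)$ is controlled by an outer average, over $y$ near $x$, of the $v$-mean differences at $y$ taken with an exponent strictly below $\min\{p,q\}$ --- otherwise the Tang--Xu maximal inequality (\autoref{l_ineq1}) does not apply --- and this exponent-mismatched Whitney estimate, valid for all $0<v\le\infty$ and responsible for the appearance of $d(1/p-1/v)$ and $d(1/q-1/v)$ in \eqref{cond_on_s}, is essentially the technical core of \cite[Theorem~1.1.14]{HN}. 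The paper's own experience on domains shows this is not a formality: the only Whitney estimate available there (\autoref{lem_Whitney_est1}) carries a supremum over $h$, which is what forces $v=\infty$, $p,q>1$ and convexity in \autoref{thm_main_2}(iii). On $\R$ the averaged version does exist in the literature, so your plan is viable, but you must either cite it in the precise form needed or accept that you are re-proving the Hedberg--Netrusov result the paper simply quotes.
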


\begin{proof}
    Note that we only need to show that a function $f \in L_{\min\{p,q\}}^\loc(\R)$ belongs to $\mathcal{E}^{s}_{u,p,q}(\R)$ if and only if $f \in L_v^\loc(\R)$ and
    $\norm{f \sep \mathcal{E}^{s}_{u,p,q}(\R)}^{(\clubsuit)}_\osc<\infty$ with $\norm{\cdot \sep \mathcal{E}^{s}_{u,p,q}(\R)}^{(\clubsuit)}_\osc$ being equivalent to $\norm{\cdot \sep \mathcal{E}^{s}_{u,p,q}(\R)}$ on $L_{\min\{p,q\}}^\loc(\R)$.
    Then we can argue exactly as in Step~2 of the proof of \autoref{thm:diff_Rd} above to deduce the claim.

    In order to see the equivalence, we can use the theory developed in \cite[Section~1.1--1.3]{HN}. It is not difficult to see that Triebel-Lizorkin-Morrey spaces $\mathcal{E}^s_{u,p,q}(\mathbb{R}^d)$ fit into the setting described in \cite{HN}. 
    A detailed proof for that can be found in \cite[Proposition~3.8]{Ho1}. 
    A similar observation was made also in \cite[Section~4.5.1]{ysy}. Now the above equivalence is a simple consequence of \cite[Theorem~1.1.14(iii)]{HN}. We refer to the proof of \cite[Proposition~3.9]{Ho1} for some more details.
    Note that a similar result can also be found in \cite[Corollary~4.14]{ysy}.
\end{proof}    

Let us reformulate the equivalent quasi-norm $\norm{ \cdot \sep \mathcal{E}^{s}_{u,p,q} (\mathbb{R}^d)}^{(\clubsuit) }_\osc $ obtained in \autoref{thm:osc_clubsuit} such that it gets a more convenient shape, i.e.\ in terms of the oscillation-based quantities introduced in \autoref{defi:norms}.
In particular, the following yields part (i) of our main \autoref{mainresult1}.

\begin{satz}\label{thm_osc_Rda=2}
    Let $d,N \in \mathbb{N}$, $s\in\mathbb{R}$, $0< p \leq u < \infty$, $0 < q,T,v \leq \infty$, and $0<R<\infty$ such that
    \begin{align*}
		d\, \max \left\{  0, \frac{1}{p} - 1, \frac{1}{q} - 1, \frac{1}{p} - \frac{1}{v}, \frac{1}{q} - \frac{1}{v} \right\} < s < N.
    \end{align*}
    Then 
    \begin{align*}
        \mathcal{E}^{s}_{u,p,q}(\R) 
        &= \left\{ f\in L_{\max\{1,p,v\}}^\loc(\R) \sep \norm{f \sep \mathcal{E}^{s}_{u,p,q}(\R)}^{(T,v,N)}_\osc <\infty \right\} \\
        &= \left\{ f\in L_{\max\{1,p,v\}}^\loc(\R) \sep \norm{f \sep \mathcal{E}^{s}_{u,p,q}(\R)}^{(R,T,v,N)}_\osc < \infty \right\}
    \end{align*}
    and the quasi-norms $\norm{\,\cdot \sep \mathcal{E}^{s}_{u,p,q}(\mathbb{R}^{d})}$, $\norm{\,\cdot \sep \mathcal{E}^{s}_{u,p,q}(\R)}^{(T,v,N)}_\osc$, and $\norm{\,\cdot \sep \mathcal{E}^{s}_{u,p,q}(\R)}^{(R,T,v,N)}_\osc$ are mutually equivalent on $L_{\max\{1,p,v\}}^\loc(\R)$. 
\end{satz}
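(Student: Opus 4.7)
The plan is to deduce the claim from \autoref{thm:osc_clubsuit} by showing that both quasi-norms $\norm{\,\cdot \sep \mathcal{E}^{s}_{u,p,q}(\R)}^{(T,v,N)}_\osc$ and $\norm{\,\cdot \sep \mathcal{E}^{s}_{u,p,q}(\R)}^{(R,T,v,N)}_\osc$ are pointwise equivalent to the dyadic quasi-norm $\norm{\,\cdot \sep \mathcal{E}^{s}_{u,p,q}(\R)}^{(\clubsuit)}_\osc$. Arguing exactly as in Step~2 of the proof of \autoref{thm:diff_Rd}, the embedding $\mathcal{E}^{s}_{u,p,q}(\R)\hookrightarrow L_1^\loc(\R)$ from \cite[Theorem~3.3]{HaMoSk} (when $p\leq 1$) together with \autoref{lem:tools_E}(iv) (when $p>1$) reduce the problem to an equivalence of the three quasi-norms on the subset $L^\loc_{\min\{p,q\}}(\R)\cap L^\loc_v(\R)$.

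The central pointwise tool is the doubling estimate $\osc_v^{N-1}f(x,t_1)\leq (t_2/t_1)^{d/v}\osc_v^{N-1}f(x,t_2)$ for $0<t_1\leq t_2$, which is immediate from restricting the ball integral in the definition of the oscillation. On each dyadic annulus $t\in[2^{-j},2^{-j+1}]$ this yields a two-sided pointwise comparison of $\osc_v^{N-1}f(x,t)$ with its endpoint values; together with $t^{-s}\sim 2^{js}$ it gives the equivalence
\[
\int_0^{\min\{T,1\}}\!\big[t^{-s}\,\osc_v^{N-1}f(x,t)\big]^q\frac{\d t}{t}\;\sim\!\!\!\!\sum_{j\geq\max\{1,\lceil\log_2(1/T)\rceil\}}\!\!\! 2^{jsq}\,\osc_v^{N-1}f(x,2^{-j})^q,
\]
with the usual modification for $q=\infty$. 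For the finitely many dyadic scales potentially missing when $T<1$, as well as for the tail $t\in[1,T]$ when $T>1$, the trivial bound $\osc_v^{N-1}f(x,t)\leq (t^{-d}\int_{B(x,t)}|f|^v\,\d y)^{1/v}$ (choosing $P=0$) together with $s>0$ and the monotonicity in $t$ absorbs the corresponding contribution into the $\mathcal{M}^u_p$-norm of a single ball average of $|f|^v$; \autoref{lem:avg} then permits a harmless adjustment of the ball radius between $1$ and $R$. This establishes $\norm{\,\cdot\sep\mathcal{E}^{s}_{u,p,q}(\R)}^{(R,T,v,N)}_\osc\sim\norm{\,\cdot\sep\mathcal{E}^{s}_{u,p,q}(\R)}^{(\clubsuit)}_\osc$.

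It remains to replace $\norm{(\int_{B(\cdot,R)}|f|^v\,\d y)^{1/v}\sep\mathcal{M}^u_p(\R)}$ by $\norm{f\sep\mathcal{M}^u_p(\R)}$, since the oscillation parts of the two remaining quasi-norms coincide. Letting $P_{x,R}\in\mathcal{P}_{N-1}$ be an almost-optimal polynomial for $\osc_v^{N-1}f(x,R)$ and using the equivalence of all $L_w$-averages over $B(x,R)$ on the finite-dimensional space $\mathcal{P}_{N-1}$, I obtain the pointwise estimate
\[
\Big(\int_{B(x,R)}|f|^v\,\d y\Big)^{1/v}\lesssim R^{d/v}\osc_v^{N-1}f(x,R)+R^{d(1/v-1/p)}\Big(\int_{B(x,R)}|f|^p\,\d y\Big)^{1/p}.
\]
The second summand is controlled in $\mathcal{M}^u_p$-norm by $\norm{f\sep\mathcal{M}^u_p(\R)}$ via \autoref{lem:tools_M}(vii) with $p$ in place of $v$; the first summand is handled through a pointwise companion bound $\osc_v^{N-1}f(x,R)\lesssim R^s\,g(x)$, where $g$ satisfies $\norm{g\sep\mathcal{M}^u_p(\R)}=\abs{f}^{(T,v,N)}_\osc$, extracted from the dyadic representation of the previous step. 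The reverse inequality is obtained by the analogous polynomial decomposition $|f(y)|\leq|P_{x,R}(y)|+|f(y)-P_{x,R}(y)|$, combined in the regime $v<p$ with a Hardy-Littlewood-type argument using the Morrey boundedness of the maximal function from \autoref{l_ineq1} together with \autoref{lem:tools_M}(vi).

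The main obstacle is precisely this final comparison of leading terms, since the two Morrey-type expressions $\norm{f\sep\mathcal{M}^u_p(\R)}$ and $\norm{(\int_{B(\cdot,R)}|f|^v\,\d y)^{1/v}\sep\mathcal{M}^u_p(\R)}$ are not pointwise comparable for arbitrary relations between $v$ and $p$. Here the full smoothness hypothesis \eqref{cond_on_s}---in particular the restriction $s>d(1/p-1/v)$ in the regime $v>p$---is essential in order to absorb the arising polynomial remainders into the common oscillation quasi-seminorm.
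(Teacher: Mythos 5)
Your overall strategy (reduce to the dyadic quasi-norm $\norm{\,\cdot \sep \mathcal{E}^{s}_{u,p,q}(\R)}^{(\clubsuit)}_\osc$ from \autoref{thm:osc_clubsuit}, then compare the leading terms) is the same as the paper's, and your treatment of the range $t\in(0,\min\{T,1\}]$ via the doubling estimate is fine. The genuine gap is your treatment of the tail $t\in[1,T]$ when $T=\infty$, which the theorem allows. You claim that the trivial bound $\osc_v^{N-1}f(x,t)\leq (t^{-d}\int_{B(x,t)}\abs{f}^v\d y)^{1/v}$ ``together with $s>0$ and the monotonicity in $t$'' absorbs this contribution into the $\mathcal{M}^u_p$-norm of a single ball average. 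This cannot work pointwise: for $t\geq 1$ the ball $B(x,t)$ \emph{contains} $B(x,1)$, so $\int_{B(x,t)}\abs{f}^v\d y$ is not controlled by $\int_{B(x,1)}\abs{f}^v\d y$ (take $f$ vanishing on $B(x,1)$), and $\int_1^\infty t^{-sq}\big(t^{-d}\int_{B(x,t)}\abs{f}^v\d y\big)^{q/v}\frac{\d t}{t}$ is in general not finite for $f$ merely in $L^\loc_{\max\{1,p,v\}}$ with finite $(\clubsuit)$-norm, unless one works at the level of the Morrey norm. The paper's Substep~1b does exactly this: it covers $B(x,2^j)$ by $K_j\sim 2^{jd}$ unit balls, passes to the $\mu$-th power with $\mu=\min\{p,q,v\}$ so that $\mathcal{M}^{u/\mu}_{p/\mu}(\R)$ is a Banach space, uses translation invariance, and then needs convergence of $\sum_j 2^{jd-js\mu-jd\mu/v}$, which holds precisely because $s>d(1/\mu-1/v)=d\max\{0,\tfrac1p-\tfrac1v,\tfrac1q-\tfrac1v\}$. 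So this is where the nontrivial part of hypothesis \eqref{cond_on_s} is consumed — not, as you assert in your last paragraph, in the final comparison of the leading terms. That misattribution suggests the actual difficulty has been skipped rather than deferred.

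Two smaller points. First, your ``pointwise companion bound'' $\osc_v^{N-1}f(x,R)\lesssim R^s g(x)$ only holds if the oscillation integral defining $g$ reaches radii $\geq R$ (oscillations at radius $R$ are \emph{not} controlled by oscillations at radii $<R$: consider $f$ supported in an annulus); this is repairable by first shrinking the working radius below $T$ via \autoref{lem:avg}, but it should be said. Second, your direction ``$\norm{f\sep\mathcal{M}^u_p(\R)}\lesssim\norm{(\int_{B(\cdot,R)}\abs{f}^v\d y)^{1/v}\sep\mathcal{M}^u_p(\R)}+\abs{f}^{(T,v,N)}_\osc$'' via a Hardy--Littlewood argument is vague as stated and is anyway avoidable: the paper gets the lower bound $\norm{f\sep\mathcal{E}^{s}_{u,p,q}(\R)}\gtrsim\norm{f\sep\mathcal{M}^u_p(\R)}$ for free from the difference characterization (\autoref{thm:diff_Rd}), and for the upper bound only needs the one-sided estimate $\norm{(\int_{B(\cdot,1)}\abs{f}^{\min\{p,v\}}\d y)^{1/\min\{p,v\}}\sep\mathcal{M}^u_p(\R)}\lesssim\norm{f\sep\mathcal{M}^u_p(\R)}$ from \autoref{lem:tools_M}(vii), after a case distinction $p<v$ versus $v\leq p$ and an application of Step~1 with $v$ replaced by $p$.
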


\begin{proof}
    Throughout this proof w.l.o.g.\ we may assume that $q,v<\infty$. In addition, due to \autoref{lem:avg} it is sufficient to consider $R=1$.

    \emph{Step 1. } 
    We first show the assertion for $\norm{\,\cdot \sep \mathcal{E}^{s}_{u,p,q}(\R)}^{(1,T,v,N)}_\osc$. 
    To this end, in view of \autoref{thm:osc_clubsuit}, it suffices to prove that 
    \begin{align}\label{eq:proof_normeq}
		\norm{f \sep \mathcal{E}^{s}_{u,p,q}(\R)}^{(1,T,v,N)}_\osc 
		\sim \norm{f \sep \mathcal{E}^{s}_{u,p,q}(\R)}^{(\clubsuit)}_\osc,
		\qquad f\in L_{\max\{1,p,v\}}^\loc(\R).
    \end{align}
	
    \emph{Substep 1a. } 
    We start by proving \eqref{eq:proof_normeq} in the special case $0 < T \leq 1$. For this purpose, we note that for every $x\in\R$
    \begin{align*}
		\Big( \int_{0}^T \big[ t^{-s} \, \osc_{v}^{N-1} f(x,t) \big]^{q} \frac{\d t}{t} \Big)^{\frac{1}{q}}
		\leq \bigg( \sum_{j=0}^{\infty} \int_{2^{-j-1}}^{2^{-j}} t^{-sq} [ \osc_{v}^{N-1} f(x,t) ]^{q} \frac{\d t}{t} \bigg)^{\frac{1}{q}},
    \end{align*}
    where $\osc_{v}^{N-1} f(x,t) \lesssim \osc_{v}^{N-1} f(x,2^{-j})$ for $2^{-j-1}<t < 2^{-j}$, see, e.g.\ \cite[p.124]{ysy} or \cite[p.10]{Yab}. 
    Moreover, we observe
    $$
		\int_{2^{-j-1}}^{2^{-j}} t^{-sq-1}\d t
		= \left[ \frac{1}{-sq} t^{-sq} \right]_{t=2^{-j-1}}^{2^{-j}} 
		= \frac{1}{sq} [ (2^{-(j+1)})^{-sq} - (2^{-j})^{-sq} ]
		= 2^{jsq} \frac{2^{sq}-1}{sq} \sim 2^{jsq}.
    $$
    Therefore,
    \begin{align*}
		&\Big( \int_{0}^T \big[ t^{-s} \, \osc_{v}^{N-1} f(x,t) \big]^{q} \frac{\d t}{t} \Big)^{\frac{1}{q}} \\
		&\quad \lesssim \bigg( [\osc_{v}^{N-1} f(x,1) ]^{q} + \sum_{j=1}^{\infty} 2^{jsq} [ \osc_{v}^{N-1} f(x,2^{-j}) ]^{q} \bigg)^{\frac{1}{q}} \\
		&\quad \leq \bigg[ \Big( \int_{B(x,1)} \abs{f(y)}^{v} \d y \Big )^{\frac{q}{v}} + \sum_{j = 1}^{\infty} 2^{j(s + \frac{d}{v} )q}  \inf_{P \in \mathcal{P}_{N-1}} \Big( \int_{B(x , 2^{-j} ) } \abs{f(y) - P(y)}^{v} \d y \Big)^{\frac{q}{v}} \bigg]^{\frac{1}{q}},
    \end{align*}
	where we used $P=0$ to bound $\osc_{v}^{N-1} f(x,1)$.
	In addition, for every $x\in\R$,
	\begin{align*}
		&\Big( \int_{B(x,1)} \abs{f(y)}^{v} \d y \Big )^{\frac{1}{v}}\\
		&\quad \leq \bigg[ \Big( \int_{B(x,1)} \abs{f(y)}^{v} \d y \Big )^{\frac{q}{v}} + \sum_{j = 1}^{\infty} 2^{j(s + \frac{d}{v})q}  \inf_{P \in \mathcal{P}_{N-1}} \Big( \int_{B(x , 2^{-j} ) } \abs{f(y) - P(y)}^{v} \d y \Big)^{\frac{q}{v}} \bigg]^{\frac{1}{q}}.
    \end{align*}
    Together this proves ``$\lesssim$'' in \eqref{eq:proof_normeq}.

    In order to show the converse estimate, again let $x\in\R$ and choose $J\in \N$ with $2^{-J}\leq T$. 
    Then, on the one hand, we observe
    \begin{align*}
		\sum_{j=1}^{J} 2^{j(s + \frac{d}{v} )q}  \inf_{P \in \mathcal{P}_{N-1}} \Big( \int_{B(x , 2^{-j} ) } \abs{f(y) - P(y)}^{v} \d y \Big)^{ \frac{q}{v} } 
        &\leq \sum_{j=1}^{J} 2^{j(s + \frac{d}{v}  )q} \Big( \int_{B(x , 2^{-j} ) } \abs{f(y)}^{v} \d y \Big)^{\frac{q}{v}} \\
		&\lesssim \Big( \int_{B(x, 1) } \abs{f(y)}^{v} \d y \Big)^{\frac{q}{v}} .
    \end{align*}
    On the other hand, we find
    \begin{align*}
		&\sum_{j=J+1}^{\infty}\! 2^{j(s + \frac{d}{v} )q}  \!\inf_{P \in \mathcal{P}_{N-1}} \!\!\Big( \int_{B(x , 2^{-j} ) } \abs{f(y) - P(y)}^{v} \d y \Big)^{ \frac{q}{v} } \\
        &\qquad= 2^{sq} \!\sum_{j=J+1}^{\infty}\! 2^{(j-1)sq}  [ \osc_{v}^{N-1} f(x,2^{-j}) ]^{q} \\
		&\qquad\sim \sum_{j=J+1}^{\infty} \int_{2^{-j}}^{2^{-j+1}} t^{-sq-1}\d t\,  [ \osc_{v}^{N-1} f(x,2^{-j}) ]^{q} \\
		&\qquad\lesssim \sum_{j=J+1}^{\infty} \int_{2^{-j}}^{2^{-(j-1)}}  [ t^{-s}\, \osc_{v}^{N-1} f(x,t) ]^{q} \frac{\d t}{t} \\
	&\qquad\leq \int_{0}^{T}  [ t^{-s}\, \osc_{v}^{N-1} f(x,t) ]^{q} \frac{\d t}{t}.
    \end{align*}
    Consequently, a combination of both estimates yields
    \begin{align*}
		&\bigg[ \Big( \int_{B(x,1)} \abs{f(y)}^{v} \d y \Big )^{ \frac{q}{v} } + \sum_{j = 1}^{\infty} 2^{j(s + \frac{d}{v}  )q}  \inf_{P \in \mathcal{P}_{N-1}} \Big( \int_{B(x , 2^{-j} ) } \abs{f(y) - P(y)}^{v} \d y \Big)^{ \frac{q}{v}  } \bigg]^{ \frac{1}{q} } \\
		&\quad \lesssim \Big( \int_{B(x,1)} \abs{f(y)}^{v} \d y \Big )^{ \frac{1}{v} } + \Big( \int_0^T [ t^{-s}\, \osc_{v}^{N-1} f(x,t) ]^{q} \frac{\d t}{t} \Big)^{ \frac{1}{q} } .
    \end{align*}
    So we get ``$\gtrsim$'' in \eqref{eq:proof_normeq} such that the proof of \eqref{eq:proof_normeq} for $T\leq 1$ is complete.
	
    \emph{Substep 1b. }
    Due to the monotonicity of $\norm{\,\cdot \sep \mathcal{E}^{s}_{u,p,q}(\R)}^{(1,T,v,N)}_\osc$ in $T$ it remains to show
    \begin{align}\label{eq:proof_normeq1b}
		\norm{f \sep \mathcal{E}^{s}_{u,p,q}(\R)}^{(1,\infty,v,N)}_\osc 
		\lesssim \norm{f \sep \mathcal{E}^{s}_{u,p,q}(\R)}^{(1,1,v,N)}_\osc,
		\qquad f\in L_{\max\{1,p,v\}}^\loc(\R),
    \end{align}
    in order to complete Step 1. 
    For $x\in \R$ we note that similar to Step 1
    \begin{align*}
		\int_{1}^{\infty} \big[ t^{-s} \, \osc_{v}^{N-1} f(x,t) \big]^{q} \frac{\d t}{t} 
		&= \sum_{j=1}^{\infty} \int_{2^{j-1}}^{2^{j}} t^{-sq} \, [ \osc_{v}^{N-1} f(x,t) ]^{q} \frac{\d t}{t} \\
		&\lesssim \sum_{j=1}^{\infty} 2^{-jsq} \, [ \osc_{v}^{N-1} f(x,2^j) ]^{q}.
    \end{align*}
    Further note that independent of $x$ there exist appropriate displacement vectors $w_k\in\Z$, $k\in\N$, such that with  $K_j:=(2^{j+1}+1)^d\sim 2^{jd}$ there holds
    $$
	    B(x,2^j) \subset \bigcup_{k=1}^{K_j} B(x+w_k,1), \qquad j\in\N.
    $$
    Hence, choosing $P=0$, we have
    $$
	    \osc_{v}^{N-1} f(x,2^j) 
	    \leq \left( 2^{-jd} \int_{B(x,2^j)} \abs{f(y)}^v \d y \right)^{\frac{1}{v}}
	    \leq \left( \sum_{k=1}^{K_j} 2^{-jd} \int_{B(x+w_k,1)} \abs{f(y)}^v \d y \right)^{\frac{1}{v}}.
    $$
    When we set $\mu:=\min\{p,q,v\}$, using $\mu/q \leq 1$ and $\mu/v \leq 1$ we find 
    \begin{align*}
	    \Big(\int_{1}^{\infty} \big[ t^{-s} \, \osc_{v}^{N-1} f(x,t) \big]^{q} \frac{\d t}{t} \Big)^{ \frac{\mu}{q}}
	    &\lesssim \Bigg(\sum_{j=1}^{\infty} \bigg[ \sum_{k=1}^{K_j} 2^{-jsv-jd} \int_{B(x+w_k,1)} \abs{f(y)}^v \d y \bigg]^{\frac{q}{v}} \Bigg)^{\frac{\mu}{q}} \\
	    &\leq \sum_{j=1}^{\infty} \bigg[ \sum_{k=1}^{K_j} 2^{-jsv-jd} \int_{B(x+w_k,1)} \abs{f(y)}^v \d y \bigg]^{\frac{\mu}{v}} \\
	    &\leq \sum_{j=1}^{\infty}  \sum_{k=1}^{K_j} 2^{-js\mu-jd \frac{\mu}{v}  } \bigg[ \int_{B(x+w_k,1)} \abs{f(y)}^v \d y \bigg]^{ \frac{\mu}{v} }.
    \end{align*}
    Then we observe
    \begin{align*}
	\abs{f}^{(\infty,v,N)}_\osc 
        &\lesssim \norm{\Big( \int_{0}^{1} \big[ t^{-s} \, \osc_{v}^{N-1} f(\cdot,t) \big]^{q} \frac{\d t}{t} \Big)^{\frac{1}{q}} \sep \mathcal{M}^{u}_{p}(\R)} \\
	    &\qquad + \norm{\Big( \int_{1}^{\infty} \big[ t^{-s} \, \osc_{v}^{N-1} f(\cdot,t) \big]^{q} \frac{\d t}{t} \Big)^{\frac{1}{q}} \sep \mathcal{M}^{u}_{p}(\R)} \\
        &\lesssim \abs{f}^{(1,v,N)}_\osc + M,
    \end{align*}
    where 
    \begin{align*}
	    M 
	    &:= \norm{\Bigg( \sum_{j=1}^{\infty}  \sum_{k=1}^{K_j} 2^{-js\mu-jd \frac{\mu}{v} } \bigg[ \int_{B(\,\cdot\,+w_k,1)} \abs{f(y)}^v \d y \bigg]^{ \frac{\mu}{v} } \Bigg)^{ \frac{1}{\mu}  } \sep \mathcal{M}^{u}_{p}(\R)} \\
	    &= \norm{\sum_{j=1}^{\infty}  \sum_{k=1}^{K_j} 2^{-js\mu-jd \frac{\mu}{v}  } \bigg[ \int_{B(\,\cdot\,+w_k,1)} \abs{f(y)}^v \d y \bigg]^{ \frac{\mu}{v}  } \sep \mathcal{M}^{u/\mu}_{p/\mu}(\R)}^{ \frac{1}{\mu}  } \\
	    &\leq \Bigg( \sum_{j=1}^{\infty}  \sum_{k=1}^{K_j} 2^{-js\mu-jd \frac{\mu}{v}   } \norm{ \bigg( \int_{B(\,\cdot\,+w_k,1)} \abs{f(y)}^v \d y \bigg)^{ \frac{\mu}{v} } \sep \mathcal{M}^{\frac{u}{\mu}}_{\frac{p}{\mu}}(\R)} \Bigg)^{ \frac{1}{\mu} },
    \end{align*}
    see \autoref{lem:tools_M}(vi) and note that $p/\mu\geq 1$ implies that $\mathcal{M}^{u/\mu}_{p/\mu}(\R)$ is a Banach space. As it is shift invariant, we have that
    \begin{align*}
	    \norm{ \bigg( \int_{B(\,\cdot\,+w_k,1)} \abs{f(y)}^v \d y \bigg)^{ \frac{\mu}{v} } \sep \mathcal{M}^{\frac{u}{\mu}}_{\frac{p}{\mu}}(\R)} 
        &= \norm{ \bigg( \int_{B(\,\cdot\,,1)} \abs{f(y)}^v \d y \bigg)^{ \frac{\mu}{v}  } \sep \mathcal{M}^{\frac{u}{\mu}}_{\frac{p}{\mu}}(\R)} \\
        &= \norm{ \bigg( \int_{B(\,\cdot\,,1)} \abs{f(y)}^v \d y \bigg)^{ \frac{1}{v}  } \sep \mathcal{M}^u_p(\R)}^\mu 
    \end{align*}
    and hence
    \begin{align*}
	    M
        &\leq \norm{ \bigg( \int_{B(\,\cdot\,,1)} \abs{f(y)}^v \d y \bigg)^{ \frac{1}{v}  } \sep \mathcal{M}^u_p(\R)} \Bigg( \sum_{j=1}^{\infty} \sum_{k=1}^{K_j} 2^{-js\mu-jd  \frac{\mu}{v} }  \Bigg)^{ \frac{1}{\mu}  }.
    \end{align*}
    Finally, we note that due to $K_j\sim 2^{jd}$
    \begin{align*}
	    \sum_{j=1}^{\infty}  \sum_{k=1}^{K_j} 2^{-js\mu-jd \frac{\mu}{v}  } 
	    &\sim \sum_{j=1}^{\infty} 2^{jd-js\mu-jd \frac{\mu}{v} } 
	    = \sum_{j=1}^{\infty} 2^{-j\mu (s - d[ \frac{1}{\mu}   - \frac{1}{v}])} 
    \end{align*}
    converges since we assumed that
    $$
	    d \left[ \frac{1}{\mu} - \frac{1}{v} \right] 
        = d \,\max\!\left\{ 0, \frac{1}{p} - \frac{1}{v}, \frac{1}{q} - \frac{1}{v} \right\} < s.
    $$
    Together this shows $\abs{f}^{(\infty,v,N)}_\osc \lesssim \abs{f}^{(1,v,N)}_\osc + M \lesssim \norm{f \sep \mathcal{E}^{s}_{u,p,q}(\R)}^{(1,1,v,N)}_\osc$ and hence \eqref{eq:proof_normeq1b}.
    So Step 1 of the proof is complete.
	
    \emph{Step 2. } 
    We use the findings from the previous step to show that also
    \begin{align*}
		\norm{f \sep \mathcal{E}^{s}_{u,p,q}(\mathbb{R}^{d})} 
		\sim \norm{f \sep \mathcal{E}^{s}_{u,p,q}(\R)}^{(T,v,N)}_\osc,
		\qquad f\in L_{\max\{1,p,v\}}^\loc(\R).
    \end{align*}
	
    \emph{Substep 2a (lower bound). } 
    Step 1 ensures that
    $$
	    \norm{f \sep \mathcal{E}^{s}_{u,p,q}(\mathbb{R}^{d})} 
		\sim \norm{f \sep \mathcal{E}^{s}_{u,p,q}(\R)}^{(1,T,v,N)}_\osc
		\geq \abs{f}^{(T,v,N)}_\osc.
    $$
    Moreover, we can apply \autoref{thm:diff_Rd} which uses differences of higher order to see that
    $$
        \norm{f \sep \mathcal{E}^{s}_{u,p,q}(\mathbb{R}^{d})} 
		\sim \norm{f \sep \mathcal{E}^{s}_{u,p,q}(\R)}_\Delta^{(T,v,N)}
        \geq \norm{f \sep \mathcal{M}^{u}_{p}(\R)}.
    $$
	
    \emph{Substep 2b (upper bound). } 
    To complete the proof, we  distinguish two cases. If $p<v$, we have
    $$
	    d\, \max\! \left\{ 0, \frac{1}{p} - 1, \frac{1}{q} - 1, \frac{1}{q} - \frac{1}{p} \Big\} \leq d\, \max\! \Big\{ 0, \frac{1}{p} - 1, \frac{1}{q} - 1, \frac{1}{p} - \frac{1}{v}, \frac{1}{q} - \frac{1}{v} \right\}.
    $$
    In other words, we can apply Step 1 (with $v:=p$) to obtain
    \begin{align*}
	    \norm{f \sep \mathcal{E}^{s}_{u,p,q}(\mathbb{R}^{d})}
	    &\lesssim \norm{f \sep \mathcal{E}^{s}_{u,p,q}(\R)}^{(1,T,p,N)}_\osc \\
	    &=\norm{\Big( \int_{B(\,\cdot\,,1)} \abs{f(y)}^p \d y \Big )^{\frac{1}{p}} \sep \mathcal{M}^{u}_{p}(\R)} + \abs{f}^{(T,p,N)}_\osc.
    \end{align*}
	Then H\"older's inequality yields $\osc_p^{N-1} f(\cdot,t)\lesssim \osc_v^{N-1} f(\cdot,t)$ on $\R$ such that the properties of Morrey spaces allow to upper bound the second summand (up to constants) by~$\abs{f}^{(T,v,N)}_\osc$.
    In order to estimate the first summand, we use \autoref{lem:tools_M}(vii) to see that
    \begin{align*}
        \norm{\Big( \int_{B(\,\cdot\,,1)} \abs{f(y)}^p \d y \Big )^{\frac{1}{p}} \sep \mathcal{M}^{u}_{p}(\R)}
        &\lesssim \norm{f \sep \mathcal{M}^{u}_{p}(\R)}.
    \end{align*}
    
    If otherwise $v\leq p$, Step 1 yields
    \begin{align*}
	    \norm{f \sep \mathcal{E}^{s}_{u,p,q}(\mathbb{R}^{d})}
	    &\lesssim \norm{\Big( \int_{B(\,\cdot\,,1)} \abs{f(y)}^v \d y \Big )^{\frac{1}{v}} \sep \mathcal{M}^{u}_{p}(\R)} + \abs{f}^{(T,v,N)}_\osc,
    \end{align*}
    where according to \autoref{lem:tools_M}(vii) there holds
    \begin{align*}
	    \norm{\Big( \int_{B(\,\cdot\,,1)} \abs{f(y)}^v \d y \Big )^{\frac{1}{v}} \sep \mathcal{M}^{u}_{p}(\R)}
	    &\lesssim \norm{f \sep \mathcal{M}^{u}_{p}(\R)}.
    \end{align*}
    Hence, the proof is complete.
\end{proof}

\section{Characterizations of \texorpdfstring{$\mathcal{E}^{s}_{u,p,q}(\Omega)$}{Esupq(Omega)}}\label{sect:characterizations_domains}
In this section, we shall prove the main results of this paper, the intrinsic characterizations of Triebel-Lizorkin-Morrey spaces $\mathcal{E}^{s}_{u,p,q}(\Omega)$ on Lipschitz domains stated in \autoref{mainresult1}(ii) and \autoref{thm_main_2}(ii), respectively.

Let us start by proving the lower bounds. From Theorems \ref{thm:diff_Rd} and \ref{thm_osc_Rda=2} the following statement (valid for general domains) can be derived easily.
\begin{prop}[Lower bounds on domains]\label{prop:Omega_lower}
    For $d\in\N$ let $\Omega\subsetneq\R$ be any domain.
    Further, let $0< p \leq u < \infty$, $0 < q,T,v \leq \infty$, $0<R<\infty$, $N \in \mathbb{N}$, and $s\in\mathbb{R}$ be such that
    \begin{align*}
		d\, \max\! \left\{ 0, \frac{1}{p} - 1, \frac{1}{q} - 1, \frac{1}{p} - \frac{1}{v}, \frac{1}{q} - \frac{1}{v} \right\} < s < N.
    \end{align*}
    Then for $f\in \mathcal{E}^{s}_{u,p,q}(\Omega)$ there holds $f \in L_{\max\{1,p,v\}}^\loc(\Omega)$ as well as
    $$
	    \norm{f\sep \mathcal{E}^{s}_{u,p,q}(\Omega)} 
	    \gtrsim \norm{\Big( \int_{B(\,\cdot\,,R)\cap \Omega} \abs{f(y)}^{v} \d y \Big)^{\frac{1}{v}} \sep \mathcal{M}^{u}_{p}(\Omega)} 
        + \norm{ f \sep \mathcal{M}^{u}_{p}(\Omega)}
        + \abs{f}^{(T,v,N)}_{\osc,\Omega}
    $$
    and
    $$
	    \norm{f\sep \mathcal{E}^{s}_{u,p,q}(\Omega)} 
	    \gtrsim \norm{\Big( \int_{B(\,\cdot\,,R)\cap \Omega} \abs{f(y)}^{v} \d y \Big)^{\frac{1}{v}} \sep \mathcal{M}^{u}_{p}(\Omega)} 
        + \norm{ f \sep \mathcal{M}^{u}_{p}(\Omega)}
        + \abs{f}^{(T,v,N)}_{\Delta,\Omega}
    $$
    with implied constants independent of $f$.
\end{prop}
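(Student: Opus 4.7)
The plan is to reduce the claim to the already-established case $\Omega=\R$ contained in Theorems~\ref{thm:diff_Rd} and~\ref{thm_osc_Rda=2}.
Given $f\in\mathcal{E}^{s}_{u,p,q}(\Omega)$ and $\varepsilon>0$, Definition~\ref{def_tlmD} allows us to pick an extension $g\in\mathcal{E}^{s}_{u,p,q}(\R)$ with $f=g|_\Omega$ in $\mathcal{D}'(\Omega)$ and
$\norm{g \sep \mathcal{E}^{s}_{u,p,q}(\R)} \leq (1+\varepsilon)\norm{f \sep \mathcal{E}^{s}_{u,p,q}(\Omega)}$.
The $\R$-characterizations supply $g\in L^{\loc}_{\max\{1,p,v\}}(\R)$, so $g$ is locally integrable and the distributional restriction $g|_\Omega$ coincides a.e.\ with the pointwise restriction. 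In particular $f=g|_\Omega$ a.e.\ in $\Omega$ and thus $f\in L^{\loc}_{\max\{1,p,v\}}(\Omega)$.

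Next I would compare each quantity appearing in the desired lower bound on $\Omega$ (expressed via~$f$) with its $\R$-counterpart (expressed via~$g$). For a.e.\ $x\in\Omega$ and every $t>0$ the inclusion $B(x,t)\cap\Omega\subseteq B(x,t)$ together with $f=g$ a.e.\ on $\Omega$ immediately gives
$$
\bigg(\int_{B(x,R)\cap\Omega}\abs{f(y)}^v\d y\bigg)^{\!1/v} \leq \bigg(\int_{B(x,R)}\abs{g(y)}^v\d y\bigg)^{\!1/v}
$$
and, by comparing infima over $\mathcal{P}_{N-1}$ on the respective domains, $\osc_{v,\Omega}^{N-1}f(x,t)\leq \osc_{v}^{N-1}g(x,t)$. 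For the differences, whenever $h\in V^N(x,t)$ the defining condition $x+\ell h\in\Omega$ for $\ell=0,\ldots,N$ combined with~\eqref{eq:Delta} yields $\Delta^N_h f(x)=\Delta^N_h g(x)$ for a.e.\ such $h$, while $V^N(x,t)\subseteq B(0,t)$. Hence
$$
\int_{V^N(x,t)}\abs{\Delta^N_h f(x)}^v\d h \leq \int_{B(0,t)}\abs{\Delta^N_h g(x)}^v\d h.
$$

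These pointwise dominations transfer directly to the Morrey-norm level: for any two non-negative functions $\phi_\Omega,\phi_{\R}$ with $\phi_\Omega\leq\phi_{\R}$ a.e.\ on $\Omega$, Definition~\ref{def:morrey_mod} shows that the sup defining $\norm{\phi_\Omega\sep\mathcal{M}^u_p(\Omega)}$ is a restricted version of the one defining $\norm{\phi_{\R}\sep\mathcal{M}^u_p(\R)}$, so the former is bounded by the latter. Applied to the three quantities above, including their $t$-integrated forms that define $\abs{\,\cdot\,}^{(T,v,N)}_{\osc,\Omega}$ and $\abs{\,\cdot\,}^{(T,v,N)}_{\Delta,\Omega}$, this produces the required inequalities between the $\Omega$-side and the $\R$-side. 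The remaining contribution $\norm{f\sep\mathcal{M}^u_p(\Omega)}$ is handled directly by Lemma~\ref{lem:tools_E}(iv), whose hypothesis $s>\sigma_p$ is implied by our assumption on~$s$.

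Combining these estimates with the lower bounds on $\norm{g\sep\mathcal{E}^{s}_{u,p,q}(\R)}$ furnished by Theorem~\ref{thm_osc_Rda=2} applied to~$g$ (and Theorem~\ref{thm:diff_Rd} applied with $T':=\max\{T,1\}$, using monotonicity of $\abs{\,\cdot\,}^{(T,v,N)}_\Delta$ in~$T$ to absorb the case $T<1$), and finally letting $\varepsilon\to 0$, completes the proof. I anticipate no substantial obstacle: the whole argument is a routine transfer from $\R$ to $\Omega$ via restriction, and the only step requiring a moment of care is the identity $\Delta^N_h f(x)=\Delta^N_h g(x)$ on $V^N(x,t)$, which is exactly the reason the set $V^N(x,t)$ is defined as it is.
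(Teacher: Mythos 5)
Your proposal is correct and follows essentially the same route as the paper: pick a (near-)optimal extension $g\in\mathcal{E}^{s}_{u,p,q}(\R)$, invoke Theorems~\ref{thm:diff_Rd} and~\ref{thm_osc_Rda=2} for $g$, and then pass from the $\R$-quantities of $g$ to the $\Omega$-quantities of $f$ by restricting the integration domains and the Morrey supremum (Lemma~\ref{lem:tools_M}(ii)). The only cosmetic deviation is that you obtain the term $\norm{f\sep\mathcal{M}^u_p(\Omega)}$ via Lemma~\ref{lem:tools_E}(iv), whereas the paper extracts it from the restricted quasi-norm $\norm{\,\cdot\,}^{(T,v,N)}_{\osc}$; both are valid.
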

\begin{proof}
    It suffices to prove the claim for $T\geq 1$.
    To this end, let $f\in \mathcal{E}^{s}_{u,p,q}(\Omega)$. 
    Then by \autoref{def_tlmD} there exists $F\in \mathcal{E}^{s}_{u,p,q}(\R)$ such that $F\vert_\Omega = f$ on $ \Omega $ in $\mathcal{D}'(\Omega)$ and
    $$
        \norm{f\sep \mathcal{E}^{s}_{u,p,q}(\Omega)} \geq \frac{1}{2} \norm{F\sep \mathcal{E}^{s}_{u,p,q}(\R)}.
    $$
    Using \autoref{thm_osc_Rda=2} we can conclude that this extension satisfies $F\in L_{\max\{1,p,v\}}^\loc(\R)$ and $\norm{F\sep \mathcal{E}^{s}_{u,p,q}(\R)} \sim \norm{F\sep \mathcal{E}^{s}_{u,p,q}(\R)}^{(R,T,v,N)}_\osc$. 
    Therefore, $F|_\Omega \in L_{\max\{1,p,v\}}^\loc(\Omega)$ equals $f$ pointwise a.e.\ in $\Omega$ and 
    \begin{align*}
        \norm{f\sep \mathcal{E}^{s}_{u,p,q}(\Omega)} 
        &\gtrsim \norm{F\sep \mathcal{E}^{s}_{u,p,q}(\R)}^{(R,T,v,N)}_\osc \\
        &\geq \norm{\Big( \int_{B(\,\cdot\,,R)\cap \Omega} \big| F|_\Omega(y) \big|^{v} \d y \Big)^{\frac{1}{v}} \sep \mathcal{M}^{u}_{p}(\R)}\\ 
	    &\qquad 
	    + \norm{\Big( \int_{0}^{T} \big[ t^{-s} \, \osc_{v,\Omega}^{N-1} [F|_\Omega](\cdot,t) \big]^{q} \frac{\d t}{t} \Big)^{\frac{1}{q}} \sep \mathcal{M}^{u}_{p}(\R)} \\
	    &\gtrsim \norm{\Big( \int_{B(\,\cdot\,,R)\cap \Omega} \big| F|_\Omega(y) \big|^{v} \d y \Big)^{\frac{1}{v}} \sep \mathcal{M}^{u}_{p}(\Omega)} + \big|F|_\Omega\big|^{(T,v,N)}_{\osc,\Omega} \\
        &= \norm{f\sep \mathcal{E}^{s}_{u,p,q}(\Omega)}^{(R,T,v,N)}_\osc \\
        &= \norm{\Big( \int_{B(\,\cdot\,,R)\cap \Omega} \abs{f(y)}^{v} \d y \Big)^{\frac{1}{v}} \sep \mathcal{M}^{u}_{p}(\Omega)} 
        + \abs{f}^{(T,v,N)}_{\osc,\Omega},
    \end{align*}
    whereby we have used \autoref{lem:tools_M}(ii).
    In a similar fashion we obtain the lower bound $\norm{f\sep \mathcal{E}^{s}_{u,p,q}(\Omega)} \gtrsim \norm{f\sep \mathcal{E}^{s}_{u,p,q}(\Omega)}^{(T,v,N)}_\osc \gtrsim \norm{f\sep \mathcal{M}^u_p(\Omega)}$.

    Moreover, \autoref{thm:diff_Rd} shows that $\norm{f\sep \mathcal{E}^{s}_{u,p,q}(\Omega)} \gtrsim \norm{F \sep \mathcal{E}^{s}_{u,p,q}(\R)}$ is lower bounded by
    \begin{align*}
        \abs{F}^{(T,v,N)}_{\osc}\!
        &\sim\!\! \sup_{y \in \R, r > 0}\! r^{d(\frac{1}{u}-\frac{1}{p})} \bigg[ \int_{B(y,r)} \Big (  \int_{0}^{T}  t^{-sq} \Big[ t^{-d} \int_{B(0,t)} \abs{\Delta^{N}_{h}F(x)}^{v} \d h \Big]^{\frac{q}{v}} \frac{\d t}{t} \Big)^{\frac{p}{q}} \d x \bigg]^{\frac{1}{p}}  \\
        &\geq\! \sup_{y \in \Omega, r > 0}\! r^{d(\frac{1}{u}-\frac{1}{p})} \bigg[ \int_{B(y,r) \cap \Omega} \Big (\int_{0}^{T} t^{-sq} \Big[ t^{-d} \int_{V^{N}(x,t)}\!\! \abs{\Delta^{N}_{h}f(x)}^{v} \d h \Big]^{\frac{q}{v}}  \frac{\d t}{t}  \Big )^{\frac{p}{q}} \d x \bigg]^{\frac{1}{p}} \\
        &= \abs{f}^{(T,v,N)}_\Delta,
    \end{align*}
    where we used \autoref{def:morrey_mod}.
    Hence, the proof is complete.
\end{proof}

\subsection{Differences on Special Lipschitz Domains}\label{Sec_Diff1}
Let us now turn to the corresponding upper estimates. 
For that purpose, we shall start with differences on special Lipschitz domains.
The resulting characterization is not only of interest on its own sake. It will also provide a technical tool which is used to derive the assertions concerning oscillations later on.
The main ingredients of the proof will be the so-called distinguished kernels constructed by Triebel in \cite{Tr92}. 

Together with \autoref{prop:Omega_lower} above, the subsequent result especially proves part (ii) of our \autoref{thm_main_2} above.
\begin{prop}\label{prop:diff_special}
    For $d\in\N$ let $ \Omega \subset \R$ be a special Lipschitz domain. Let $0< p \leq u < \infty$, $0 < q,T \leq \infty$, $1 \leq v \leq \infty$, $0<R<\infty$, $ N \in \N$, and $s>0$.
    Then for $f \in L^\loc_{\max\{p,v\}}(\Omega)$ there holds
    \begin{align*}
        \norm{ f \sep \mathcal{E}^{s}_{u,p,q}(\Omega) } 
        & \lesssim  \norm{ \bigg(\int_{B(\,\cdot\,, R) \cap \Omega}   \abs{f(y)}^v \d y\bigg)^{\frac{1}{v}} \sep  \mathcal{M}^{u}_{p}( \Omega)} + \abs{f}^{(T,v,N)}_{\Delta,\Omega}. 
    \end{align*}
    If additionally $p\geq 1$, then also $\norm{ f \sep \mathcal{E}^{s}_{u,p,q}(\Omega) } \lesssim  \norm{ f \sep  \mathcal{M}^{u}_{p}( \Omega)} + \abs{f}^{(T,v,N)}_{\Delta,\Omega}$.
    In both cases, the implied constants are independent of $f$.
\end{prop}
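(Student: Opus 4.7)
The strategy mimics Triebel's distinguished kernel approach for classical Triebel-Lizorkin spaces on special Lipschitz domains (cf.~\cite[Section~5.2.1]{Tr92}), adapted to the Morrey scale. Since $\Omega$ is the region above the graph of a Lipschitz function of constant $L$, one can fix an open cone $\Gamma \subset \R$ (depending only on $L$) such that $x - y \in \Omega$ for every $x \in \Omega$ and $y \in \Gamma$. The central auxiliary ingredient is a \emph{distinguished kernel characterization} of $\mathcal{E}^{s}_{u,p,q}(\R)$: there exist $k_0, k^N \in \mathcal{D}(\R)$ with $\supp k_0, \supp k^N \subset \Gamma \cap B(0,R_0)$, where $k^N$ possesses $N$ vanishing moments and appropriate Tauberian conditions, such that
\begin{equation*}
    \norm{g \sep \mathcal{E}^{s}_{u,p,q}(\R)} \sim \norm{k_0 * g \sep \mathcal{M}^{u}_{p}(\R)} + \bigg\Vert\bigg(\int_0^T t^{-sq}\abs{(k^N_t * g)(\cdot)}^q \frac{\d t}{t}\bigg)^{1/q} \sep \mathcal{M}^{u}_{p}(\R)\bigg\Vert
\end{equation*}
for all $g \in \cs'(\R)$ (with $k^N_t(y) := t^{-d}k^N(y/t)$). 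This is the Morrey-scale counterpart of \cite[Theorem~2.4.1]{Tr92}; the $(\lesssim)$-direction we need follows from Peetre-type maximal function estimates combined with the vector-valued Morrey maximal inequality \autoref{l_ineq1}.

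Given this characterization, the plan is to let $Ef$ denote the trivial extension of $f$ from $\Omega$ to $\R$ by zero, which lies in $L^{\loc}_{\max\{1,p,v\}}(\R)$ thanks to $v \geq 1$. Since $Ef\vert_\Omega = f$, by \autoref{def_tlmD} it suffices to bound $\norm{Ef \sep \mathcal{E}^{s}_{u,p,q}(\R)}$ by the right-hand side of the claimed inequality. For the low-frequency term, H\"older's inequality (valid since $v \geq 1$) together with the compact support of $k_0$ yields the pointwise estimate $\abs{(k_0 * Ef)(x)} \lesssim \chi_{\Omega_{R_0}}(x)\cdot(\int_{B(x,R_0)\cap\Omega}\abs{f}^v\,\d y)^{1/v}$, where $\Omega_{R_0}$ denotes the $R_0$-neighbourhood of $\Omega$. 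Taking $\mathcal{M}^{u}_{p}(\R)$-norms, adjusting $R_0$ to the prescribed $R$ via \autoref{lem:avg}, and passing back to $\mathcal{M}^{u}_{p}(\Omega)$ via \autoref{lem:tools_M}(ii) and (v) produces the desired bound. When additionally $p \geq 1$, $\mathcal{M}^{u}_{p}(\R)$ is a translation-invariant Banach space and Young's convolution inequality directly yields $\norm{k_0 * Ef \sep \mathcal{M}^{u}_{p}(\R)} \leq \norm{k_0 \sep L_1(\R)}\,\norm{Ef \sep \mathcal{M}^{u}_{p}(\R)} \lesssim \norm{f \sep \mathcal{M}^{u}_{p}(\Omega)}$ via \autoref{lem:tools_M}(i), giving the strengthened version.

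For the high-frequency term, the $N$ vanishing moments of $k^N$ together with a Taylor-type identity produce a representation $(k^N_t * Ef)(x) = \int K(y)(\Delta^N_{-ty/N} Ef)(x)\,\d y$ with $K$ bounded and supported in $\Gamma \cap B(0, R_1)$. The geometric key is that for $x \in \Omega$ and $y \in \supp K$ the cone property guarantees $x + \ell (-ty/N) \in \Omega$ for every $\ell = 0,\ldots,N$, so $(\Delta^N_{-ty/N} Ef)(x) = (\Delta^N_{-ty/N} f)(x)$ with step $-ty/N \in V^N(x, R_1 t)$. H\"older's inequality in $y$ (again using $v \geq 1$) then gives, for $x \in \Omega$,
\begin{equation*}
    \abs{(k^N_t * Ef)(x)}^v \lesssim t^{-d}\int_{V^N(x,ct)}\abs{\Delta^N_h f(x)}^v\,\d h,
\end{equation*}
while the contribution from $x$ in the thin $R_1 t$-neighbourhood of $\partial\Omega$ outside $\Omega$ (the only region where the convolution is nonzero there) is controlled by an analogous shift argument combined with \autoref{lem:tools_M}(v). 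Plugging back into the kernel characterization, transferring to $\mathcal{M}^{u}_{p}(\Omega)$ via \autoref{lem:tools_M}(ii), and absorbing the constant $c$ by a rescaling in $t$ (together with a monotonicity-in-$T$ argument) yields the bound by $\abs{f}^{(T,v,N)}_{\Delta,\Omega}$. The principal obstacle I anticipate lies in establishing the distinguished kernel characterization above under the weak restriction $s > 0$, and more subtly in carefully accounting for the convolution contributions from $x \in \R \setminus \Omega$, where the cone support can still reach back into $\Omega$; the cleanest route is through a careful application of \autoref{lem:tools_M}(v) to reduce all relevant estimates to quantities taking place over $\Omega$ itself.
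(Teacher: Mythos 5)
Your overall architecture --- cone-supported local means plus the difference representation coming from vanishing moments --- is the right one, and your treatment of the low-frequency term and of the passage from $v=1$ to general $v\geq 1$ matches the paper. But there is a genuine gap at the very first reduction: you propose to bound $\norm{Ef \sep \mathcal{E}^{s}_{u,p,q}(\R)}$ for the extension of $f$ by \emph{zero}, using a kernel characterization of $\mathcal{E}^{s}_{u,p,q}(\R)$ on the whole space. This cannot work. The zero extension is generically not in $\mathcal{E}^{s}_{u,p,q}(\R)$ once $s$ is not small: take $f$ smooth and bounded near a boundary point; then for $x\in\R\setminus\Omega$ at distance $\delta$ from $\partial\Omega$ the convolution $(k^N_t * Ef)(x)$ vanishes for $t\ll\delta$ but is of size $\sim 1$ for $t\gtrsim\delta$ (the vanishing moments give no cancellation, because the integral only sees part of the support of $k^N_t$), so $\big(\int_0^T t^{-sq}\abs{(k^N_t*Ef)(x)}^q \frac{\d t}{t}\big)^{1/q} \sim \distance{x}{\partial\Omega}^{-s}$. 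The $\mathcal{M}^{u}_{p}(\R)$-norm of this quantity over a boundary layer is infinite for $sp\geq 1$, whereas the right-hand side of the proposition is finite; \autoref{lem:tools_M}(v) cannot repair this, since it only compares norms of one fixed function over two domains and does not make the exterior contribution disappear.

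What is actually needed --- and what the paper uses --- is a characterization that is intrinsic to $\Omega$ on \emph{both} sides: the Littlewood--Paley estimate $\norm{f\sep\mathcal{E}^{s}_{u,p,q}(\Omega)} \lesssim \norm{(\sum_j 2^{jsq}\abs{\phi_j * f}^q)^{1/q}\sep\mathcal{M}^{u}_{p}(\Omega)}$ from [Yao, Theorem 1] (the Morrey analogue of Rychkov's theorem), where the kernels are supported in the reflected cone $-K$ so that $\phi_j*f$ only uses values of $f$ inside $\Omega$, and --- crucially --- the Morrey norm is taken over $\Omega$ only. This inequality is the boundedness of the Rychkov universal extension operator in disguise, i.e.\ it implicitly produces a good extension of $f$ rather than the zero extension; it is the substantial external input your argument is missing and cannot be replaced by the whole-space characterization applied to $Ef$. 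Once it is in place, the remaining steps (Triebel's distinguished kernels, the splitting into small and large $j$, the difference representation, H\"older for $v>1$, Young/Minkowski for the $p\geq 1$ variant) go through essentially as you describe and coincide with the paper's proof.
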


\begin{proof}
    \emph{Step 1. }  In this first step we shall use intrinsic Littlewood-Paley type characterizations for the spaces $\mathcal{E}^{s}_{u,p,q}(\Omega)$. 
    For that purpose, let us recall the definition of a Littlewood-Paley family associated to a special Lipschitz domain $\Omega$. 
    We follow \cite{Yao}. 
    A sequence $\phi = ( \phi_{j} )_{j = 0}^{\infty} \subset \mathcal{S}(\mathbb{R}^d)$ of Schwartz functions is called a Littlewood-Paley family associated with $\Omega$ if the following conditions are fulfilled:
    \begin{enumerate}
        \item For all multi-indices $\alpha \in \mathbb{N}_{0}^{d}$, we have 
        \begin{align*}
            \int_{\mathbb{R}^d} x^{\alpha} \,\phi_{1}(x) \d x = 0.
        \end{align*}

        \item For $j > 1$, we have $\phi_{j} = 2^{(j-1)d}\, \phi_{1}(2^{j-1} \,\cdot\,)$.
    
        \item There holds 
        \begin{align*}
            \sum_{j = 0}^{\infty} \phi_{j} = \delta 
        \end{align*}
         with convergence in $\mathcal{S}'(\mathbb{R}^d)$, whereby $\delta$ denotes the Dirac delta distribution.

        \item For all $j \in \mathbb{N}_{0}$, we have
        \begin{align*}
            \supp (\phi_{j}) \subset \left\{ x = (x',x_{d}) \in \mathbb{R}^d \sep x_{d} < - \norm{ \abs{\nabla \omega} \sep L_{\infty}(\mathbb{R}^{d-1}) } \cdot \abs{x'} \right\}=:-K.
        \end{align*}
        Notice that $-K$ can be interpreted as reflected narrow vertically directed cone, see also \cite[Section~2]{Ry99} and \cite[Section~4]{ZHS}. 
    \end{enumerate}
    So let $\phi = ( \phi_{j} )_{j = 0}^{\infty} \subset \mathcal{S}(\mathbb{R}^d)$ be a Littlewood-Paley family associated with the special Lipschitz domain $\Omega \subset \mathbb{R}^d$. 
    Further let $0 < p \leq u < \infty$, $0 < q \leq \infty$, and $s \in \mathbb{R}$. 
    Then there exists a constant $C_{1} >0$ independent of $f \in \mathcal{S}'(\Omega)$ such that
    \begin{align}\label{eq_LPTC1_new}
        \norm{ f \sep \mathcal{E}^{s}_{u,p,q}(\Omega)} 
        \leq  C_{1} \norm{ \bigg ( \sum_{j = 0}^{\infty} 2^{jsq} \abs{(\phi_{j} \ast f)(\cdot)}^{q} \bigg)^{\frac{1}{q}} \sep \mathcal{M}^{u}_{p}(\Omega)},
    \end{align}
    where as usual $\ast$ denotes the convolution.
    This is one part of the Littlewood-Paley characterization from \cite[Theorem~1]{Yao} which generalizes a corresponding result for the original Triebel-Lizorkin spaces \cite[Theorem~3.2]{Ry99}.

    \emph{Step 2. } To continue the proof, we choose a particular Littlewood-Paley family. 
    For that purpose, we follow the ideas of Triebel; see Step~3 in the proof of \cite[Theorem~1.118]{Tr06}. 
    That is, we work with the distinguished kernels from \cite[Section~3.3.2]{Tr92} and estimate them from above as described in \cite[Formula (1.392)]{Tr06}. 
    In principle, this estimate is exactly what we want to use. However, in \cite{Tr06} the details of the proof are not given. 
    Therefore, in what follows we briefly recall the main ideas of this approach. 
    Let $N \in \mathbb{N}$. 
    Let $\varphi \in C^{\infty}_{0}(\mathbb{R})$ and $\psi \in C^{\infty}_{0}(\mathbb{R})$ be such that 
    \begin{align*}
        \int_{\mathbb{R}} \varphi(x) \d x = 1 
        \qquad \mbox{and} \qquad 
        \varphi(x) - \frac{1}{2}\, \varphi\!\left(\frac{x}{2}\right) = \psi^{(N)}(x).
    \end{align*}
    The existence of such functions was proven in \cite[Section~3.3.1/Lemma]{Tr92}.
    Now we define a function $\Phi \in C^{\infty}_{0}(\mathbb{R}^d)$ via
    \begin{align*}
        \Phi(x) := \prod_{j=1}^{d} \varphi(x_{j}), \qquad x=(x_{1} , \ldots , x_{d})  \in \mathbb{R}^d.
    \end{align*}
    Moreover, for $x\in\R$, we put
    \begin{align*}
        k_{0}(x) := \frac{(-1)^{N+1}}{N!} \sum_{r=1}^{N} \sum_{m=1}^{N} (-1)^{r+m} \binom{N}{r} \binom{N}{m}\,  \frac{m^{N-d}}{r^d}\, \Phi\!\left(\frac{x}{rm}\right) 
    \end{align*}
    and let
    \begin{align}\label{eq_scal1_new}
        k(x) := k_{0}(x) - 2^{-d}\, k_{0}\!\left(\frac{x}{2}\right),
    \end{align}
    as well as
    \begin{align}\label{eq_scal2_new}
        k_{j}(x) := 2^{jd}\, k(2^{j}x),\qquad j\in\N.
    \end{align}
     Note that the functions $\varphi$ and $\psi$ can be chosen in such a way that
    \begin{align}\label{eq_rotation_nec}
        \supp (k_{j}) \subset B(0, 2^{-j} N) \cap -K.
    \end{align}
    This follows from the construction of the involved functions (see above and \cite[Sections~3.3.1 and 3.3.2]{Tr92}) in combination with  the narrow vertically directed cone property of special Lipschitz domains; see also the explanations in \cite{Ry99}. However, to ensure \eqref{eq_rotation_nec} in the case of a small opening angle of $K$ it  becomes necessary to rotate either $\Phi$ (and hence~$k_{j}$) or to rotate $\Omega$ (and so $K$). Both strategies do not require substantial modifications in comparison with the case that no rotation is necessary. So, if we rotate $\Phi \in C^{\infty}_{0}(\mathbb{R}^d)$, the resulting function still belongs to $C^{\infty}_{0}(\mathbb{R}^d)$. Otherwise a rotation of $\Omega$ can be incorporated using \autoref{lem:tools_M}(iv) and \autoref{lem:tools_E}(ii). Therefore, in what follows it is enough to deal with the case that no rotation is required at all.
    
    Now, for $j \in \mathbb{N}_{0}$ and $f \in L^\loc_{\max\{1,p,v\}}(\Omega)$, we put
    \begin{align*}
        f_{j}(x) 
        := (k_{j} \ast f)(x)
        = \int_{B(0, 2^{-j} N) \cap -K} k_{j}(y) f(x-y) \d y 
        = \int_{B(x,2^{-j} N) \cap x + K} k_{j}(x-z) f(z) \d z .
    \end{align*}
    It is well-known that for each $x \in \Omega$ also the shifts $x + K$ are in $\Omega  $, see \cite{Ry99}. 
    With other words, $f_j$ is well-defined as we only need function values of $f$ from the inside of $\Omega$; see also (the beginning of) \cite[Section~2]{Ry99}. 
    In \cite[Formula (10) on p.175]{Tr92} it was observed that for $j \in \mathbb{N}$ we can write
    \begin{align}\label{eq_kern_diff1_new}
        f_{j}(x) = \sum_{| \alpha | = N} \int_{\mathbb{R}^d} D^{\alpha} k_{\alpha}(-y) \, \Delta^{N}_{2^{-j}y} f(x) \d y,
    \end{align}
    with appropriate $k_{\alpha} \in C^{\infty}_{0}(\mathbb{R}^d)$. Notice that in \eqref{eq_kern_diff1_new} there is an additional ``$-$'' since we used a slightly different definition for $f_{j}$. 
    Moreover, recall that $( k_{j} )_{j = 0}^{\infty}$ can be interpreted as a Littlewood-Paley family. 
    This already has been observed in Step~3 of the proof of \cite[Theorem~1.118]{Tr06}. (The moment condition can be derived from \cite[Formula (8) on p.174]{Tr92} in combination with \cite[Remark after (1.1)]{Ry99}. 
    The scaling condition directly follows from our definition \eqref{eq_scal2_new}. 
    The approximation identity can be derived from the definitions of $k_{j}$ and $k$; see also \cite[Formula (9) on p.174]{Tr92}. 
    Finally, the support condition can be fulfilled by an appropriate choice of the functions $\varphi$ and $\psi$, see above. 
    Here we also refer to the explanations concerning \cite[Figure~1]{Ry99}.) 
    
    Summing up all we did up to now in Steps~1 and~2, Formula~\eqref{eq_LPTC1_new} becomes
    \begin{equation}\label{eq_LPTC2_new}
        \norm{ f \sep \mathcal{E}^{s}_{u,p,q}(\Omega)} 
        \lesssim \norm{ \bigg( \sum_{j = 0}^{\infty} 2^{jsq} \abs{f_{j}(\cdot)}^{q} \bigg)^{\frac{1}{q}} \sep \mathcal{M}^{u}_{p}(\Omega)}.
    \end{equation}

    \emph{Step 3. } 
    Next we show that the functions $\abs{f_{j}}$ can be estimated from above by integrals of higher order differences of $f$. 
    This observation already has been made in \cite[Formula~(1.392)]{Tr06}.
    Nevertheless, we will give some details.
    
    We start with the special case $j\leq J$, where $J\in\N$ is chosen such that $2^{-J} \leq T$. 
    Note that for $x \in \Omega$ Formulas \eqref{eq_scal1_new} and \eqref{eq_scal2_new} yield
    $$
        \abs{k_j(x)} \leq 2^{jd} \abs{k_0(2^j x)} + 2^{(j-1)d} \abs{k_0(2^{j-1} x)}, \qquad j\in\N,
    $$
    such that for $j=0,\ldots,J$ we obtain $\abs{k_j(x)} \lesssim \sum_{\ell=0}^J \abs{k_0(2^{\ell} x)}$ with a constant depending on $d$ and $J$.
    Hence, for $x\in\Omega$,
    \begin{align*}
        \abs{f_j(x)}
        &\lesssim \int_{B(x, 2^{-j} N) \cap \Omega} \abs{k_{j}(x-y)} \abs{f(y)} \d y \\
        &\lesssim \int_{B(x, N) \cap \Omega} \sum_{\ell=0}^J \abs{k_{0}\!\left(\frac{x-y}{2^{-\ell}}\right)} \abs{f(y)} \d y \\
        &=\! \int_{B(x, N) \cap \Omega} \sum_{\ell=0}^J \abs{ \frac{(-1)^{N+1}}{N!} \sum_{r=1}^{N} \sum_{m=1}^{N} (-1)^{r+m} \binom{N}{r} \binom{N}{m}  \frac{m^{N-d}}{r^d} \Phi\!\left(\frac{x-y}{2^{-\ell}\, rm}\right)} \abs{f(y)} \d y \\
        &\lesssim \int_{B(x, N) \cap \Omega}  \sum_{\ell=0}^J \sum_{r=1}^{N} \sum_{m=1}^{N}  \abs{\Phi\!\left(\frac{x-y}{2^{-\ell}\, rm}\right)} \abs{f(y)} \d y \\
        &\lesssim \int_{B(x, N) \cap \Omega} \abs{f(y)} \d y.
    \end{align*}
    
    Let us now turn to the case $j > J$ for which we can use the alternative representation of $f_{j}$ given by \eqref{eq_kern_diff1_new}. Recall that also the functions $k_{\alpha}$ are defined in terms of $\Phi$, see \cite[Formula (26) in Chapter 3.3.2]{Tr92}. 
    Consequently, they can be chosen in a way such that $\supp k_{\alpha} \subset B(0,1)  \cap -K $. For $x \in \Omega$ using $k_{\alpha} \in C^{\infty}_{0}(\mathbb{R}^d)$ the narrow vertically directed cone property of $\Omega$ yields
    \begin{align*}
        \abs{f_{j}(x)} 
        &\leq \sum_{\abs{\alpha} = N} \int_{B(0,1) \cap \{ y\in\R \sep x + \ell 2^{-j}  y \in \Omega \; \text{for} \; 0 \leq \ell \leq N  \}} \abs{D^{\alpha} k_{\alpha}(-y)} \abs{\Delta^{N}_{2^{-j}y} f(x)} \d y \\
        &\lesssim \int_{B(0,1) \cap \{ y\in\R \sep x + \ell 2^{-j}  y \in \Omega \; \text{for} \; 0 \leq \ell \leq N  \}} \abs{\Delta^{N}_{2^{-j}y} f(x)} \d y \\
        &= 2^{jd} \int_{B(0,2^{-j}) \cap \{ h\in\R \sep x + \ell h \in \Omega \; \text{for} \; 0 \leq \ell \leq N \}} \abs{\Delta^{N}_{h} f(x)} \d h,
    \end{align*}
    where we put $h := 2^{-j} y$. 
    This also verifies \cite[Formula (1.392)]{Tr06}. 

    \emph{Step 4. }
    A combination of \eqref{eq_LPTC2_new} with the estimates from Steps~3 yields
    \begin{align*}
        \norm{f \sep \mathcal{E}^{s}_{u,p,q}(\Omega)} 
        &\lesssim \norm{ \int_{B(\,\cdot\,, N) \cap \Omega} \abs{f(y)} \d y \sep  \mathcal{M}^{u}_{p}( \Omega)} \\
        &\qquad + \norm{\bigg( \sum_{j=J+1}^{\infty} 2^{jsq} \Big (  2^{jd}  \int_{V^{N}(\,\cdot\,,2^{-j})} \abs{\Delta^{N}_{h} f(\cdot)} \d h \Big)^{q} \bigg)^{\frac{1}{q}} \sep \mathcal{M}^{u}_{p}(\Omega)},
    \end{align*}
    where \autoref{lem:avg} can be used to replace $B(\,\cdot\,, N)$ by $B(\,\cdot\,, R)$ and for every $x\in\Omega$ the sum can be bounded by 
    \begin{align*}
        &\sum_{j = J+1}^{\infty} 2^{j} \int_{2^{-j}}^{2^{-j+1}} \, 2^{jsq} \Big( 2^{jd}  \int_{V^{N}(x,2^{-j})} \abs{\Delta^{N}_{h} f(x)} \d h \Big)^{q} \d t\\
        &\qquad \lesssim  \sum_{j = J+1}^{\infty} \int_{2^{-j}}^{2^{-j+1}}  t^{-sq} \Big(  t^{-d} \int_{V^{N}(x,t)} \abs{\Delta^{N}_{h} f(x)} \d h \Big)^{q} \frac{\d t}{t}  \\
        &\qquad \leq \int_{0}^{T}  t^{-sq} \Big( t^{-d}  \int_{V^{N}(x,t) }  \abs{\Delta^{N}_{h} f(x)} \d h \Big)^{q} \frac{\d t}{t}
    \end{align*}
    since $2^{-J} \leq T$. 
    Hence, we obtain
    \begin{align*}
        \norm{ f \sep \mathcal{E}^{s}_{u,p,q}(\Omega) } 
        &\lesssim  \norm{ \int_{B(\cdot, R) \cap \Omega}   \abs{f(y)} \d y \sep  \mathcal{M}^{u}_{p}( \Omega)} + \abs{f}^{(T,1,N)}_{\Delta,\Omega}
    \end{align*}
    which coincides with the desired result if $v=1$.
    
    \emph{Step 5. }
    If $v>1$, we note that for every $x\in\Omega$ and $0<t<1$ H\"older's inequality gives
    $$
        \int_{B(x,R)\cap \Omega} \abs{f(y)} \d y
        \lesssim \left( \int_{B(x,R)\cap \Omega} \abs{f(y)}^v \d y \right)^{\frac{1}{v}}
    $$
    as well as
    $$
         t^{-d}  \int_{V^{N}(x,t) } \abs{ \Delta^{N}_{h} f(x)} \d h
        \lesssim \left( t^{-d}  \int_{V^{N}(x,t) } \abs{ \Delta^{N}_{h} f(x)}^{v} \d h \right)^{\frac{1}{v}}
    $$
    since $\abs{V^{N}(x,t)}\lesssim t^d$.

    If we additionally assume that $p\geq 1$, we can modify the previous argument and use \autoref{lem:tools_M}(ii), (vii), and (i) to see that
    \begin{align*}
       \norm{ \int_{B(\,\cdot\,,R)\cap \Omega} \abs{f(y)} \d y  \sep \mathcal{M}^{u}_{p}(\Omega)}
       &\leq  \norm{ \int_{B(\,\cdot\,,R)} \abs{Ef(y)} \d y  \sep \mathcal{M}^{u}_{p}(\mathbb{R}^d)} \\
       &\lesssim \norm{ Ef  \sep \mathcal{M}^{u}_{p}(\R)} \\
       &\lesssim \norm{ f  \sep \mathcal{M}^{u}_{p}(\Omega)}
    \end{align*}
    which completes the proof.
\end{proof}

Later we shall also prove characterizations in terms of higher order differences for Triebel-Lizorkin-Morrey spaces on convex bounded Lipschitz domains. 
However, to avoid technical difficulties, at first we deduce the desired characterizations by local oscillations and come back to this issue in \autoref{Sec_Diff1_re} below.

\subsection{Oscillations on Lipschitz Domains}\label{sec_osc_dom1}
Here we prove characterizations in terms of local oscillations for Triebel-Lizorkin-Morrey spaces $\mathcal{E}^{s}_{u,p,q}(\Omega)$ on special or bounded Lipschitz domains $\Omega\subset\R$ with $d\in\N$. 

In view of \autoref{prop:Omega_lower} above, it suffices to prove corresponding upper bounds.
To this end, we first concentrate on special Lipschitz domains. 
In preparation for that we collect some facts concerning projections onto quasi-optimal polynomials for which we follow the ideas developed in \cite[Section~2]{DeSh}.

\begin{lem}[Quasi-optimal polynomials]
\label{lem:opt_pol}
    Let $d,N\in\N$ and $\Omega \subset \mathbb{R}^d$ be a special Lipschitz domain. 
    For $x\in\Omega$ and $t>0$ consider the Hilbert spaces $\mathcal{H}_{x,t}^{N-1} := \big( \mathcal{P}_{N-1}, \distr{\cdot}{\cdot}_{x,t}\big)$ with inner product
    $$
        \distr{f}{g}_{x,t} := t^{-d} \int_{B(x,t)\cap \Omega} f(z)\, \overline{g(z)} \d z
    $$
    and for some appropriate index set $\mathcal{I}$ let $\{p_{i,x,t} \sep i\in \mathcal{I}\}$ be an orthonormal basis of $\mathcal{H}_{x,t}^{N-1}$ such that 
    $$
        \sup_{y\in B(x,t)\cap \Omega} \abs{p_{i,x,t}(y)} \leq C
    $$
    for some $C>0$ independent of $x$ and $t$ (see Remark \ref{rem_gram_schm} below). 
    Then there exist constants $c_1=c_1(C^2,\#\mathcal{I})>0$ and $c_2=c_2(c_1, d)>0$ such that the projection operators
    $$
        \Pi_{x,t}^{N-1} \colon L_1(B(x,t)\cap \Omega) \to \mathcal{H}_{x,t}^{N-1}, \qquad f\mapsto \Pi_{x,t}^{N-1} f := \sum_{i\in \mathcal{I}} \distr{f}{p_{i,x,t}}_{x,t} \, p_{i,x,t},
    $$
    satisfy 
    \begin{enumerate}
        \item the pointwise bound
        $$
            \abs{\big(\Pi_{x,t}^{N-1}f\big)(y)} \leq c_1 \, t^{-d} \int_{B(x,t)\cap \Omega} \abs{f(z)} \d z, \qquad y\in B(x,t)\cap \Omega,
        $$

        \item for all $p\in\mathcal{P}_{N-1}$ the equation $ \Pi_{x,t}^{N-1}[f-p] = \Pi_{x,t}^{N-1}f-p \in \mathcal{P}_{N-1}$,
        
        \item the quasi-optimality
        $$
            t^{-d} \int_{B(x,t)\cap \Omega} \abs{f(z) - \big(\Pi_{x,t}^{N-1}f\big)(z)} \d z 
            \leq c_2 \, \osc_{1,\Omega}^{N-1}f(x,t),
        $$
        
        \item the limit property $\lim_{t\to 0}\limits \big(\Pi_{x,t}^{N-1}f\big)(x) = f(x)$ for a.e.\ $x\in\Omega$.
    \end{enumerate}
\end{lem}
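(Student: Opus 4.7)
The plan is to verify the four claims in order, exploiting that $\Pi_{x,t}^{N-1}$ is an orthogonal projection in the Hilbert space $\mathcal{H}_{x,t}^{N-1}$ whose image already coincides with $\mathcal{P}_{N-1}$, so it acts as the identity on polynomials.

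For (i) I would simply expand the projection: writing $\abs{(\Pi_{x,t}^{N-1}f)(y)} \leq \sum_{i\in\mathcal{I}} \abs{\distr{f}{p_{i,x,t}}_{x,t}}\, \abs{p_{i,x,t}(y)}$ and using the uniform bound $\abs{p_{i,x,t}}\leq C$ together with the very definition of the inner product yields, for $y\in B(x,t)\cap\Omega$,
\begin{align*}
    \abs{\distr{f}{p_{i,x,t}}_{x,t}\, p_{i,x,t}(y)}
    \leq C^2 \, t^{-d}\int_{B(x,t)\cap\Omega}\!\abs{f(z)}\d z,
\end{align*}
and summing over the (finite) set $\mathcal{I}$ gives (i) with $c_1=C^2\#\mathcal{I}$. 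Assertion~(ii) is immediate from the observation that since $\mathcal{P}_{N-1}$ is the whole Hilbert space, $\Pi_{x,t}^{N-1}$ restricted to $\mathcal{P}_{N-1}$ is the identity; by linearity this gives $\Pi_{x,t}^{N-1}[f-p]=\Pi_{x,t}^{N-1}f-p$.

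For (iii) I would use a standard triangle-inequality trick. For each $\varepsilon>0$ pick $P_\varepsilon\in\mathcal{P}_{N-1}$ so that $t^{-d}\int_{B(x,t)\cap\Omega}\abs{f-P_\varepsilon}\d z \leq \osc_{1,\Omega}^{N-1}f(x,t)+\varepsilon$. Then by~(ii), $\Pi_{x,t}^{N-1}f-P_\varepsilon=\Pi_{x,t}^{N-1}[f-P_\varepsilon]$, and applying~(i) to $f-P_\varepsilon$ gives a pointwise bound of the latter by $c_1$ times this average. Integrating over $B(x,t)\cap\Omega$ (which has measure at most $c_d t^d$) and combining with the trivial estimate on $f-P_\varepsilon$ yields
\begin{align*}
    t^{-d}\!\int_{B(x,t)\cap\Omega}\!\abs{f-\Pi_{x,t}^{N-1}f}\d z \leq (1+c_1 c_d)(\osc_{1,\Omega}^{N-1}f(x,t)+\varepsilon),
\end{align*}
from which (iii) follows by letting $\varepsilon\to 0$.

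Finally, for (iv) I would apply (ii) with the constant polynomial $p\equiv f(x)\in\mathcal{P}_{N-1}$, so that $(\Pi_{x,t}^{N-1}f)(x)-f(x)=(\Pi_{x,t}^{N-1}[f-f(x)])(x)$, and then invoke~(i) to get $\abs{(\Pi_{x,t}^{N-1}f)(x)-f(x)}\leq c_1\, t^{-d}\int_{B(x,t)\cap\Omega}\abs{f(z)-f(x)}\d z$. Since $\Omega$ is open, for each $x\in\Omega$ all balls $B(x,t)$ with $t$ small enough lie inside $\Omega$, so the right-hand side is the standard Lebesgue-differentiation average which tends to $0$ as $t\to 0$ at every Lebesgue point of $f\in L_1^\loc(\Omega)$, i.e.\ almost everywhere in $\Omega$. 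No step presents a real obstacle; the only subtle point is to make sure in~(iv) that the shrinking sets $B(x,t)\cap\Omega$ satisfy the Lebesgue differentiation theorem, and this is automatic from openness of $\Omega$. Remark~\ref{rem_gram_schm} is cited to guarantee existence of the uniformly bounded orthonormal basis underlying the whole argument.
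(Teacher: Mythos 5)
Your proposal is correct and follows essentially the same route as the paper: expand the projection against the uniformly bounded orthonormal basis for (i), use linearity plus the identity on $\mathcal{P}_{N-1}$ for (ii), and combine (i)--(ii) with a (near-)best approximating polynomial and the volume bound $\abs{B(x,t)}\sim t^d$ for (iii). For (iv) the paper merely notes that $B(x,t)\subset\Omega$ for small $t$ and defers to \cite[Formula (2.7)]{DeSh}; the Lebesgue-point argument you spell out (applying (ii) with the constant polynomial $f(x)$ and then (i)) is precisely that reference's argument, so there is no substantive difference.
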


\begin{rem}\label{rem_gram_schm}
    An orthonormal basis $\{p_{i,x,t} \sep i\in \mathcal{I}\}$ can be constructed out of appropriate monomials using the Gram-Schmidt process, see also \cite[Section~2]{DeSh}.
\end{rem}

\begin{proof}
    Let $x \in \Omega$ and $t > 0$. Let $f\in L_1(B(x,t)\cap \Omega)$.
    \begin{enumerate}
        \item For $y\in B(x,t)\cap \Omega$ we have
        \begin{align*}
            \abs{\big(\Pi_{x,t}^{N-1}f\big)(y)} 
             \leq \sum_{i\in \mathcal{I}} t^{-d}  \int_{B(x,t)\cap\Omega} \abs{f(z)} \, C \d z \, C = c_1\, t^{-d} \int_{B(x,t)\cap\Omega} \abs{f(z)} \d z.
        \end{align*}

        \item $\Pi_{x,t}^{N-1}$ is linear and satisfies $\Pi_{x,t}^{N-1}=\mathrm{id}$ on $\mathcal{H}_{x,t}^{N-1} \ni p$.
        
        \item Since $\mathcal{H}_{x,t}^{N-1}$ is a linear subspace of the normed space $L_1(B(x,t)\cap \Omega)$, there exists a best\-approximation $\pi\in \mathcal{H}_{x,t}^{N-1}$ to $f$.
        Hence, (ii) implies
        \begin{align*}
            &t^{-d} \int_{B(x,t)\cap \Omega} \abs{f(y) - \big(\Pi_{x,t}^{N-1}f\big)(y)} \d y \\
            &\leq t^{-d} \int_{B(x,t)\cap \Omega} \abs{f(y) - \pi(y)} \d y
            + t^{-d} \int_{B(x,t)\cap \Omega} \abs{\big(\Pi_{x,t}^{N-1}f\big)(y)-\pi(y)} \d y \\
            &= \osc_{1,\Omega}^{N-1}f(x,t)
            + t^{-d} \int_{B(x,t)\cap \Omega} \abs{\big(\Pi_{x,t}^{N-1}[f-\pi]\big)(y)} \d y
        \end{align*}
        and we can use (i) to conclude
        \begin{align*}
            t^{-d} \!\int_{B(x,t)\cap \Omega} \abs{\big(\Pi_{x,t}^{N-1}[f-\pi]\big)(y)} \d y
            &\leq t^{-d} \!\int_{B(x,t)\cap \Omega} c_1\, t^{-d} \int_{B(x,t)\cap\Omega} \abs{(f-\pi)(z)} \d z \d y \\
            &= \osc_{1,\Omega}^{N-1}f(x,t) \, t^{-d} \int_{B(x,t)\cap \Omega} c_1 \d y,
        \end{align*}
        where
        $$
            t^{-d} \int_{B(x,t)\cap \Omega} c_1 \d y \leq c_1 \, t^{-d} \abs{B(x,t)} =:c_2-1.
        $$

        \item Since $\Omega$ is open and $x \in \Omega$, there exists $t_{0}>0$, such that for $t<t_0$ there holds $B(x,t)\subset\Omega$. Consequently in the limiting case we can argue as in \cite[Formula (2.7)]{DeSh}, where the assertion is proved.\qedhere
    \end{enumerate}
\end{proof}

Now we are well-prepared to derive the desired upper estimates for special Lipschitz domains which together with \autoref{prop:Omega_lower} imply the corresponding part of \autoref{mainresult1}(ii). 
For this we use some tools provided in the proof of \autoref{prop:diff_special}, i.e., we deal with higher order differences. It turns out that they are closely related to local oscillations. 

\begin{prop}\label{prop:osc_special}
    For $d\in\N$ let $ \Omega \subset \R$ be a special Lipschitz domain. Let $0< p \leq u < \infty$, $0 < q,T \leq \infty$, $1 \leq v \leq \infty$, $0<R<\infty$, $ N \in \N$, and $s>0$.
    Then for $f \in L^\loc_{\max\{p,v\}}(\Omega)$ there holds
    \begin{align*}
        \norm{ f \sep \mathcal{E}^{s}_{u,p,q}(\Omega) } 
        & \lesssim  \norm{ \bigg(\int_{B(\,\cdot\,, R) \cap \Omega}   \abs{f(y)}^v \d y\bigg)^{\frac{1}{v}} \sep  \mathcal{M}^{u}_{p}( \Omega)} + \abs{f}^{(T,v,N)}_{\osc,\Omega}. 
    \end{align*}
    If additionally $p\geq 1$, then also $\norm{ f \sep \mathcal{E}^{s}_{u,p,q}(\Omega) } \lesssim  \norm{ f \sep  \mathcal{M}^{u}_{p}( \Omega)} + \abs{f}^{(T,v,N)}_{\osc,\Omega}$.
    In both cases, the implied constants are independent of $f$.
\end{prop}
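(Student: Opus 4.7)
Since \autoref{prop:diff_special} already supplies the difference-based upper bound, my plan is to deduce \autoref{prop:osc_special} from it by establishing the pointwise comparison
$$\abs{f}^{(T,v,N)}_{\Delta,\Omega} \lesssim \abs{f}^{(T,v,N)}_{\osc,\Omega} + \norm{\Big(\int_{B(\,\cdot\,,R)\cap\Omega}\abs{f(y)}^v \d y\Big)^{\frac{1}{v}} \sep \mathcal{M}^u_p(\Omega)}$$
with constants independent of $f$. Inserting this bound into \autoref{prop:diff_special} then directly produces both variants of the statement; the $\norm{f\sep\mathcal{M}^u_p(\Omega)}$-version for $p\geq 1$ follows from \autoref{lem:tools_M}(vii), exactly as in Step 5 of the proof of \autoref{prop:diff_special}.

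The starting observation is that $N$-th order differences annihilate $\mathcal{P}_{N-1}$, so for every $P\in\mathcal{P}_{N-1}$, $x\in\Omega$, and $h\in V^N(x,t)$ we have $\Delta^N_h f(x)=\Delta^N_h(f-P)(x)$. Combining Jensen's inequality with $\abs{V^N(x,t)}\lesssim t^d$ and the affine substitution $y=x+kh$ (of Jacobian $k^{-d}$, sending $V^N(x,t)$ into $B(x,Nt)\cap\Omega$ for each $1\leq k\leq N$) then yields
$$t^{-d}\!\int_{V^N(x,t)}\!\abs{\Delta^N_h f(x)}^v \d h \lesssim \abs{(f-P)(x)}^v + t^{-d}\!\int_{B(x,Nt)\cap\Omega}\!\abs{(f-P)(y)}^v \d y.$$
Choosing $P$ as a $v$-best polynomial approximation on $B(x,Nt)\cap\Omega$ (which exists since $\mathcal{P}_{N-1}$ is finite-dimensional) bounds the second summand by a constant multiple of $\osc^{N-1}_{v,\Omega}f(x,Nt)^v$.

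The main obstacle is controlling the pointwise value $\abs{(f-P)(x)}$, which is not directly dominated by a single-scale $L_v$-oscillation. To overcome this, I would instead take $P:=\Pi^{N-1}_{x,Nt}f$ from \autoref{lem:opt_pol} and exploit property (iv) of that lemma via a dyadic telescope along the scales $r_j:=2^{-j}Nt$:
$$\abs{(f-\Pi^{N-1}_{x,Nt}f)(x)}\leq \sum_{j=0}^\infty \abs{\Pi^{N-1}_{x,r_{j+1}}f(x) - \Pi^{N-1}_{x,r_j}f(x)}.$$
Each summand is the value at $x$ of a polynomial of degree less than $N$; since the cone condition of special Lipschitz domains ensures $\abs{B(x,r)\cap\Omega}\sim r^d$, the pointwise bound \autoref{lem:opt_pol}(i) combined with Jensen's inequality (recall $v\geq 1$) controls the summand by a local $L_v$-average. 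In turn, the $L_v$-analogue of \autoref{lem:opt_pol}(iii)---straightforwardly derived from (i) and the identity $\Pi^{N-1}_{x,r}(f-P)=\Pi^{N-1}_{x,r}f-P$ for $P\in\mathcal{P}_{N-1}$---shows that this average is dominated by $\osc^{N-1}_{v,\Omega}f(x,r_j)+\osc^{N-1}_{v,\Omega}f(x,r_{j+1})$.

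Putting everything together leads to the pointwise bound
$$\Big(t^{-d}\!\int_{V^N(x,t)}\!\abs{\Delta^N_h f(x)}^v\d h\Big)^{\frac{1}{v}} \lesssim \sum_{j=0}^\infty \osc^{N-1}_{v,\Omega}f(x,2^{-j}Nt).$$
Feeding it into the $t^{-s}$-weighted $L_q(\d t/t)$-$\mathcal{M}^u_p(\Omega)$-quasi-norm and applying a discrete weighted Hardy-type (or Minkowski) inequality---whose geometric convergence in $j$ is ensured by the standing assumption $s>0$---compresses the telescoping sum back into $\abs{f}^{(T,v,N)}_{\osc,\Omega}$. The finitely many scales with $2^{-j}Nt>T$ (which fall outside the integration range in the oscillation seminorm) are treated separately via the trivial choice $P=0$, which converts $\osc^{N-1}_{v,\Omega}f(x,r_j)$ into $\big(r_j^{-d}\int_{B(x,r_j)\cap\Omega}\abs{f(y)}^v\d y\big)^{1/v}$; after possibly rescaling the radius by a constant via \autoref{lem:avg}, this matches precisely the local-average term in the statement, completing the required comparison and hence the proposition.
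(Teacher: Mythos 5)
Your proposal is correct and follows essentially the same route as the paper's proof: reduce to the difference bound of \autoref{prop:diff_special}, use that $\Delta^N_h$ annihilates $\mathcal{P}_{N-1}$ to dominate the averaged differences by $\abs{(f-\Pi^{N-1}_{x,\cdot}f)(x)}$ plus an oscillation, control the pointwise term by a dyadic telescope of the projections from \autoref{lem:opt_pol}, and resum using $s>0$. The only cosmetic difference is that you carry general $v\geq 1$ through the whole argument (requiring the easy $L_v$-analogue of \autoref{lem:opt_pol}(iii)), whereas the paper runs the argument for $v=1$ and upgrades to general $v$ by H\"older at the very end.
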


\begin{proof}
    W.l.o.g.\ we can assume that $T<N$, as well as $N>s$. 
    
    \emph{Step 1. } For $f \in L^\loc_{\max\{p,v\}}(\Omega)$
    \autoref{prop:diff_special} (with $T=v=1$) yields 
    \begin{align*}
        \norm{ f \sep \mathcal{E}^{s}_{u,p,q}(\Omega) } 
        & \lesssim  \norm{ \int_{B(\,\cdot\,, R) \cap \Omega} \abs{f(y)} \d y \sep  \mathcal{M}^{u}_{p}( \Omega)} + \abs{f}^{(1,1,N)}_{\Delta,\Omega}.
    \end{align*}
    In this first step, we are going to estimate the averaged differences in $\abs{f}^{(1,1,N)}_{\Delta,\Omega}$ from above in terms of polynomials and local oscillations of $f$. 
    For that purpose, we let $x \in \Omega$ be fixed and use some ideas from \cite[Lemma 4.10]{ysy}, see also the proof of \cite[Theorem 1]{See1}. At first, we note that a change of measure yields
    \begin{align*}
        &\int_{0}^{1}  t^{-sq} \bigg ( t^{-d}  \int_{V^{N}(x,t) } \abs{ \Delta^{N}_{h} f(x)} \d h \bigg)^{q} \frac{\d t}{t} \\
        &\qquad = \int_{0}^{N}  \left(\frac{\tau}{N}\right)^{-sq} \bigg ( \left(\frac{\tau}{N}\right)^{-d} \int_{V^{N}\!\left(x,\frac{\tau}{N}\right)} \abs{ \Delta^{N}_{h} f(x)} \d h \bigg)^{q} \frac{\d \tau}{\tau} \\
        &\qquad \sim \int_{0}^{N}  t^{-sq} \bigg ( t^{-d} \int_{V^{N}\!\left(x,\frac{t}{N}\right)} \abs{\Delta^{N}_{h} f(x)} \d h \bigg)^{q} \frac{\d t}{t}.
    \end{align*}
    
    Second, Taylor's theorem easily shows that for each $h\in\R\setminus\{0\}$ and $p\in\mathcal{P}_N$, there holds $\Delta^{1}_h p \in\mathcal{P}_{N-1}$.
    Therefore, $\Delta_h^N p \in \mathcal{P}_0$ is constant on $\R$ such that $\Delta_h^{N+1}p \equiv 0$.
    Together with \eqref{eq:Delta} this shows that for every $t>0$, all $h\in V^N(x,\frac{t}{N})$, and each $p\in\mathcal{P}_{N-1}$ we have
    \begin{align*}
        \abs{\Delta_h^Nf(x)} 
        &= \abs{\Delta_h^N[f-p](x)} \\
        &= \abs{\sum_{k=0}^N (-1)^{N-k} \binom{N}{k} \,[f-p](x+kh)}\\
        &\lesssim \abs{[f-p](x)} + \sum_{k=1}^N \abs{[f-p](x+kh)}
    \end{align*}
    and thus
    \begin{align*}
        \int_{V^{N}\!\left(x,\frac{t}{N}\right)} \abs{\Delta^{N}_{h} f(x)} \d h
        \lesssim \int_{V^{N}\!\left(x,\frac{t}{N}\right)} \abs{[f-p](x)} \d h + \sum_{k=1}^N \int_{V^{N}\!\left(x,\frac{t}{N}\right)} \abs{[f-p](x+kh)} \d h.
    \end{align*}
    Therein, we have
    $$
        \int_{V^{N}\!\left(x,\frac{t}{N}\right)} \abs{[f-p](x)} \d h 
        \leq \abs{[f-p](x)} \abs{B\!\left(0,\frac{t}{N}\right)} 
        \sim t^d\, \abs{[f-p](x)}
    $$
    as well as for $k=1,\ldots,N$
    \begin{align*}
        \int_{V^{N}\!\left(x,\frac{t}{N}\right)} \abs{[f-p](x+kh)} \d h
        &\leq \int_{B\left(0,\frac{t}{N}\right)} \chi_{\Omega}(x+kh) \abs{[f-p](x+kh)} \d h \\
        &\sim \int_{B\left(0,\frac{kt}{N}\right)} \chi_{\Omega}(x+\widetilde{h}) \abs{[f-p](x+\widetilde{h})} \d \widetilde{h} \\
        &\leq \int_{B(x,t)} \chi_{\Omega}(y) \abs{[f-p](y)} \d y \\
        &= \int_{B(x,t)\cap \Omega} \abs{[f-p](y)} \d y.
    \end{align*}
    Hence, we conclude that for all $t>0$ and each $p\in\mathcal{P}_{N-1}$
    \begin{align*}
        t^{-d} \int_{V^{N}\!\left(x,\frac{t}{N}\right)} \abs{\Delta^{N}_{h} f(x)} \d h
        \lesssim \abs{[f-p](x)} + t^{-d} \int_{B(x,t)\cap \Omega} \abs{[f-p](y)} \d y.
    \end{align*}
    In particular, we can choose $p:=\Pi_{x,t}^{N-1}f$ from \autoref{lem:opt_pol} such that the second term can be replaced by $\osc_{1,\Omega}^{N-1}f(x,t)$. In conclusion, this shows
    \begin{align*}
        \abs{f}^{(1,1,N)}_{\Delta,\Omega} 
        &\lesssim \norm{ \bigg( \int_{0}^{N} t^{-sq} \abs{\big[f-\Pi_{(\cdot),t}^{N-1}f\big](\cdot)}^{q} \frac{\d t}{t} \bigg)^{\frac{1}{q}} \sep  \mathcal{M}^{u}_{p}( \Omega)} + \abs{f}^{(N,1,N)}_{\osc,\Omega}.
    \end{align*}

    \emph{Step 2. }
    We still need to estimate $\abs{\big[f-\Pi_{(\cdot),t}^{N-1}f\big](\cdot)}$ in terms of oscillations. 
    To this end, we employ some ideas from \cite[p.112]{ysy}.  
    Again let $x\in\Omega$ and $t>0$ be fixed and note that for all $L\in\N$
    $$
        \abs{\big[f-\Pi_{x,t}^{N-1}f\big](x)} 
        \leq \abs{f(x)-\big(\Pi_{x,2^{-L}t}^{N-1}f\big)(x)} 
        + \sum_{\ell=0}^{L-1} \abs{\big(\Pi_{x,2^{-(\ell+1)}t}^{N-1}f\big)(x)-\big(\Pi_{x,2^{-\ell}t}^{N-1}f\big)(x)},
    $$
    where, due to \autoref{lem:opt_pol}(ii), (i), and (iii),
    \begin{align*}
        \abs{\big(\Pi_{x,2^{-(\ell+1)}t}^{N-1}f\big)(x)-\big(\Pi_{x,2^{-\ell}t}^{N-1}f\big)(x)}
        &= \abs{\big(\Pi_{x,2^{-(\ell+1)}t}^{N-1}\big[f-\Pi_{x,2^{-\ell}t}^{N-1}f\big]\big)(x)} \\
        &\lesssim \left( 2^{-(\ell+1)}t \right)^{-d} \int_{B(x,2^{-(\ell+1)}t)\cap \Omega} \abs{\big[f-\Pi_{x,2^{-\ell}t}^{N-1}f\big](z)} \d z \\
        &\lesssim \left( 2^{-\ell}t \right)^{-d} \int_{B(x,2^{-\ell}t)\cap \Omega} \abs{f(z)-\big(\Pi_{x,2^{-\ell}t}^{N-1}f\big)(z)} \d z \\
        &\lesssim \osc_{1,\Omega}^{N-1}f(x,2^{-\ell}t)
    \end{align*}
    with constants that do not depend on $f$, $x$, $t$, or $\ell$.
    Thus,
    $$
        \sum_{\ell=0}^{L-1} \abs{\big(\Pi_{x,2^{-(\ell+1)}t}^{N-1}f\big)(x)-\big(\Pi_{x,2^{-\ell}t}^{N-1}f\big)(x)}
        \lesssim \sum_{\ell=0}^{\infty} \osc_{1,\Omega}^{N-1}f(x,2^{-\ell}t).
    $$
    If $L\in\N$ is chosen large enough, \autoref{lem:opt_pol}(iv) ensures that also $\abs{f(x)-\big(\Pi_{x,2^{-L}t}^{N-1}f\big)(x)}$ is smaller than this quantity such that we derive that
    $$
        \abs{\big[f-\Pi_{x,t}^{N-1}f\big](x)} 
        \lesssim \sum_{\ell=0}^{\infty} \osc_{1,\Omega}^{N-1}f(x,2^{-\ell}t).
    $$
    
    Next, it is easily seen that for $m:=\min\{1,q\}$
    $$
        \bigg( \int_{0}^{N} t^{-sq} \abs{g_1(t)+g_2(t)}^{q} \frac{\d t}{t} \bigg)^{\frac{m}{q}}
        \leq \bigg( \int_{0}^{N} t^{-sq} \abs{g_1(t)}^{q} \frac{\d t}{t} \bigg)^{\frac{m}{q}} + \bigg( \int_{0}^{N} t^{-sq} \abs{g_2(t)}^{q} \frac{\d t}{t} \bigg)^{\frac{m}{q}}.
    $$
    Therefore,
    \begin{align*}
        \bigg( \int_{0}^{N} t^{-sq} \abs{\big[f-\Pi_{x,t}^{N-1}f\big](x)}^{q} \frac{\d t}{t} \bigg)^{\frac{m}{q}} 
        &\lesssim \bigg( \int_{0}^{N} t^{-sq} \Big( \sum_{\ell=0}^{\infty} \osc_{1,\Omega}^{N-1}f(x,2^{-\ell}t) \Big)^{q} \frac{\d t}{t} \bigg)^{\frac{m}{q}} \\
        &\leq \sum_{\ell=0}^{\infty} \bigg( \int_{0}^{N} t^{-sq} \, \osc_{1,\Omega}^{N-1}f(x,2^{-\ell}t)^{q} \frac{\d t}{t} \bigg)^{\frac{m}{q}} \\
        &=\sum_{\ell=0}^{\infty} \bigg( 2^{-\ell s q} \int_{0}^{2^{-\ell}N} [\tau^{-s} \, \osc_{1,\Omega}^{N-1}f(x,\tau)]^{q} \frac{\d \tau}{\tau} \bigg)^{\frac{m}{q}} \\
        &\lesssim \bigg( \int_{0}^{N} [t^{-s} \,\osc_{1,\Omega}^{N-1}f(x,t)]^{q} \frac{\d t}{t} \bigg)^{\frac{m}{q}},
    \end{align*}
    as the geometric series converges, due to $s>0$ and $m>0$. So, we have shown that
    \begin{align*}
        \abs{f}^{(1,1,N)}_{\Delta,\Omega} 
        &\lesssim \norm{ \bigg( \int_{0}^{N} t^{-sq} \abs{\big[f-\Pi_{(\cdot),t}^{N-1}f\big](\cdot)}^{q} \frac{\d t}{t} \bigg)^{\frac{1}{q}} \sep  \mathcal{M}^{u}_{p}( \Omega)} + \abs{f}^{(N,1,N)}_{\osc,\Omega}
        \lesssim \abs{f}^{(N,1,N)}_{\osc,\Omega}.
    \end{align*}
    Since we assumed $T<N<\infty$, for every fixed $x\in\Omega$, we may write
    \begin{align*}
        \int_{T}^{N} \big[t^{-s} \, \osc_{1,\Omega}^{N-1}f(x,t)\big]^{q} \frac{\d t}{t}
        &\leq \int_{T}^{N} \Big[t^{-s} \, t^{-d} \int_{B(x,t)\cap\Omega} \abs{f(y)}\d y\Big]^{q} \frac{\d t}{t} \\
        &\lesssim \left( \int_{B(x,N)\cap\Omega} \abs{f(y)}\d y \right)^q
    \end{align*}
    such that finally
    \begin{align*}
        \abs{f}^{(1,1,N)}_{\Delta,\Omega}
        &\lesssim \norm{ \bigg( \int_{0}^{T} \big[t^{-s} \, \osc_{1,\Omega}^{N-1}f(\cdot,t)\big]^{q} \frac{\d t}{t} \bigg)^{\frac{1}{q}} \sep  \mathcal{M}^{u}_{p}( \Omega)} 
        + \norm{ \int_{B(\,\cdot\,, R) \cap \Omega} \abs{f(y)} \d y \sep  \mathcal{M}^{u}_{p}( \Omega)},
    \end{align*}
    where we used \autoref{lem:avg} to replace $B(\,\cdot\,, N)$ by $B(\,\cdot\,, R)$.

    \emph{Step 3. } Combining the previous steps shows the claim with $v=1$, i.e.
    \begin{align*}
        \norm{ f \sep \mathcal{E}^{s}_{u,p,q}(\Omega) } 
        & \lesssim  \norm{ \int_{B(\,\cdot\,, R) \cap \Omega} \abs{f(y)} \d y \sep  \mathcal{M}^{u}_{p}( \Omega)}  + \abs{f}^{(T,1,N)}_{\osc,\Omega}
    \end{align*}
    and we can complete the proof similar to Step~5 in the proof of \autoref{prop:diff_special}.
\end{proof}

Finally, we transfer our previous findings to the case of \emph{bounded} Lipschitz domains in order to complete the proof of \autoref{mainresult1}(ii).
\begin{prop}\label{prop:boundedLip_osc_RTv}
    For $d\in\N$ let $ \Omega \subset \R$ be a bounded Lipschitz domain. Let $0< p \leq u < \infty$, $0 < q,T \leq \infty$, $1 \leq v \leq \infty$, $0<R<\infty$, $ N \in \N$ and $s>\sigma_{p,q}$.
    Then for $f \in L^\loc_{\max\{p,v\}}(\Omega)$ there holds
    \begin{align*}
        \norm{ f \sep \mathcal{E}^{s}_{u,p,q}(\Omega) } 
        & \lesssim  \norm{ \bigg(\int_{B(\,\cdot\,, R) \cap \Omega}   \abs{f(y)}^v \d y\bigg)^{\frac{1}{v}} \sep  \mathcal{M}^{u}_{p}( \Omega)} + \abs{f}^{(T,v,N)}_{\osc,\Omega}. 
    \end{align*}
    If additionally $p\geq 1$, then also $\norm{ f \sep \mathcal{E}^{s}_{u,p,q}(\Omega) } \lesssim  \norm{ f \sep  \mathcal{M}^{u}_{p}( \Omega)} + \abs{f}^{(T,v,N)}_{\osc,\Omega}$.
    In both cases, the implied constants are independent of $f$.
\end{prop}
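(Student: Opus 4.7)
The plan is to localize via a finite partition of unity adapted to the Lipschitz atlas of $\Omega$, thereby reducing the bounded Lipschitz case to the already established characterizations on $\R$ (\autoref{thm_osc_Rda=2}) and on special Lipschitz domains (\autoref{prop:osc_special}). By \autoref{lipdo}(ii), I would fix an open cover $\{B_k\}_{k=0}^M$ of $\overline{\Omega}$ with $B_0 \Subset \Omega$ and, for every $k = 1, \ldots, M$, an affine-linear diffeomorphism $\Phi_k$ together with a special Lipschitz domain $\widetilde{\Omega}_k$ satisfying $B_k \cap \Omega = B_k \cap \Phi_k(\widetilde{\Omega}_k)$. Picking a subordinate smooth partition of unity $\{\psi_k\}_{k=0}^M \subset \mathcal{D}(\R)$ with $\sum_k \psi_k \equiv 1$ on $\overline{\Omega}$, writing $f = \sum_k (\psi_k f)$ in $\mathcal{D}'(\Omega)$, and invoking the quasi-triangle inequality in $\mathcal{E}^{s}_{u,p,q}(\Omega)$ with exponent $\tau = \min\{1,p,q\}$ (inherited from \autoref{l_bp1}(iii) via \autoref{def_tlmD}) yields
\begin{equation*}
    \norm{f \sep \mathcal{E}^{s}_{u,p,q}(\Omega)}^\tau \lesssim \sum_{k=0}^{M} \norm{\psi_k f \sep \mathcal{E}^{s}_{u,p,q}(\Omega)}^\tau.
\end{equation*}

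Each summand will then be transferred to a model space. For $k=0$, since $\supp \psi_0 \Subset \Omega$, the trivial extension of $\psi_0 f$ to $\R$ belongs to $L^\loc_{\max\{1,p,v\}}(\R)$ and \autoref{thm_osc_Rda=2} bounds $\norm{\psi_0 f \sep \mathcal{E}^{s}_{u,p,q}(\R)}$ by the ball-average and oscillation quantities of $\psi_0 f$ on $\R$. For $k \geq 1$, the affine invariance from \autoref{lem:tools_E}(ii) together with the support-based localization from \autoref{lem:tools_E}(iii) (applied with $S$ a small neighbourhood of $\supp \psi_k$ on which $\Omega$ and $\Phi_k(\widetilde{\Omega}_k)$ coincide) equates $\norm{\psi_k f \sep \mathcal{E}^{s}_{u,p,q}(\Omega)}$, up to constants, to $\norm{\psi_k f \sep \mathcal{E}^{s}_{u,p,q}(\widetilde{\Omega}_k)}$, to which \autoref{prop:osc_special} applies.

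It then remains to dominate the resulting localized ball-average and oscillation quantities for $\psi_k f$ by the corresponding global quantities of $f$ on $\Omega$. The ball-average part is routine: the pointwise bound $\abs{\psi_k f}^v \leq \norm{\psi_k}_\infty^v \abs{f}^v$, the support $\supp (\psi_k f) \subseteq B_k \cap \Omega = B_k \cap \widetilde{\Omega}_k$, and \autoref{lem:tools_M}(v) (applied, if necessary, after a shrinking of the radius $R$ by means of \autoref{lem:avg}) transfer the Morrey norms from $\widetilde{\Omega}_k$ (respectively $\R$) back to $\Omega$; when $p\geq 1$, the result is further bounded by $\norm{f \sep \mathcal{M}^{u}_{p}(\Omega)}$ via \autoref{lem:tools_M}(i),(vii).

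The main obstacle I anticipate is the analogous domination of the oscillation terms, namely a pointwise estimate of the form
\begin{equation*}
    \osc^{N-1}_{v, \widetilde{\Omega}_k}(\psi_k f)(x, t) \lesssim \osc^{N-1}_{v, \Omega}(f)(x, t) + E(x, t)
\end{equation*}
in which the error $E(x, t)$, after integration against $t^{-sq}\, \mathrm{d}t/t$ and taking the $\mathcal{M}^{u}_{p}$-norm, is controlled by the ball-average term for $f$. A natural candidate approximant to $\psi_k f$ on $B(x,t) \cap \widetilde{\Omega}_k$ is the projection onto $\mathcal{P}_{N-1}$ of the product of a near-optimal polynomial $P$ for $f$ on $B(x,t) \cap \Omega$ with the Taylor polynomial of $\psi_k$ of degree $N-1$ at $x$; the resulting error splits into a part of order $\osc^{N-1}_{v,\Omega}(f)(x,t)$ (coming from $f-P$), a Taylor remainder of order $t^N$ times a local average of $\abs{f}$ (coming from $\psi_k$ minus its Taylor polynomial), and a high-degree tail of the product. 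Controlling this tail is the delicate point: the naive use of Markov's inequality only yields a factor $t$ (rather than $t^N$) in front of the local average of $\abs{f}$, which is insufficient for $s\geq 1$, and a sharper estimate of the Taylor coefficients $D^\beta P(x)$, for $\abs{\beta} \geq 1$, in terms of polynomial best-approximation errors of $f$ of lower order (rather than via $\norm{P}_\infty$) will be needed to cover the full range $s \in (\sigma_{p,q}, N)$. Once this pointwise bound is established, integration in $t$ combined with \autoref{lem:tools_M}(v),(vii) and \autoref{lem:avg} will conclude the proof.
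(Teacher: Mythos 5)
Your overall strategy coincides with the paper's: localize with a partition of unity subordinate to the Lipschitz atlas, use \autoref{lem:tools_E}(ii),(iii) to pass to special Lipschitz domains and $\R$, invoke \autoref{thm_osc_Rda=2} and \autoref{prop:osc_special} there, and transfer the ball-average terms back to $\Omega$ via \autoref{lem:tools_M}(iv),(v) and \autoref{lem:avg}. The ball-average part of your argument is fine.

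However, at the step you yourself flag as "the delicate point" there is a genuine gap, and the route you sketch for closing it is not the one that works. You insist on applying the model-domain characterizations with oscillations of the \emph{same} order $N-1$, so that the product of a near-best polynomial $P\in\mathcal{P}_{N-1}$ for $f$ with a Taylor polynomial of the cutoff has too high a degree and must be projected back into $\mathcal{P}_{N-1}$; you then correctly observe that Markov-type bounds on the discarded tail only produce a factor $t$ instead of $t^N$, which fails for $s\geq 1$, and you leave the needed sharper coefficient estimates unproved. The paper avoids this tail entirely by a different (and simpler) device due to Triebel: since \autoref{prop:osc_special} and \autoref{thm_osc_Rda=2} hold for \emph{every} order, one applies them on the model domains with oscillations of the raised order $\widetilde{N}-1=N+L-2$, where $L>s+d$. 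Writing the cutoff as its Taylor polynomial $T_k\in\mathcal{P}_{L-1}$ around $x$ plus a remainder $R_k=O(t^L)$ on $B(x,t)$, the product $T_k\widetilde{P}$ with $\widetilde{P}\in\mathcal{P}_{N-1}$ lies exactly in $\mathcal{P}_{\widetilde{N}-1}$ and is therefore an admissible competitor; no projection and no tail estimate are needed. The only error term is $t^{L}$ times a local average of $\abs{f}$, and $\int_0^{c\,t_k} t^{(L-s-d)q}\,\frac{\d t}{t}<\infty$ precisely because $L>s+d$, so it is absorbed into the ball-average term. Since $\osc^{\widetilde{N}-1}_{1,\Omega}f\leq\osc^{N-1}_{1,\Omega}f$ plays no role on the wrong side (the final bound is in terms of $\osc^{N-1}_{1,\Omega}f$), this yields the claim. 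Without either this order-raising trick or a worked-out substitute for your "sharper estimate of the Taylor coefficients $D^\beta P$", your proof does not go through.
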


\begin{proof}
    W.l.o.g.\ we may assume that $T<\infty$. Further, we can assume $q<\infty$ as otherwise the usual modifications have to be made. 

    \emph{Step 1 (Localization). } 
    Let $\Omega\subset\R$ be a bounded Lipschitz domain and assume that $0<p\leq u<\infty$, as well as $0<q\leq\infty$, and $s\in\re$.
    First, let us show by standard arguments that for $f\in \mathcal{D}'(\Omega)$ and an arbitrary collection of extensions $F_k\in\mathcal{S}'(\R)$ to $f$,
    \begin{align}\label{eq:proof_splitting}
        \norm{f \sep \mathcal{E}^{s}_{u,p,q}(\Omega)}
        \lesssim \norm{ \sigma_0 F_0 \sep \mathcal{E}^{s}_{u,p,q}(\R)} + \sum_{k=1}^m \norm{ \big[(\sigma_k F_k) \circ \Phi_k^{-1}\big]\vert_{\omega_k} \sep \mathcal{E}^{s}_{u,p,q}(\omega_k)}
    \end{align}
    with $\omega_k$ being special Lipschitz domains. The assumption on $\Omega$ implies that there exist open balls $B_1,\ldots, B_m$ in $\R$, affine-linear diffeomorphisms $\Phi_1,\ldots,\Phi_m\colon\R\to\R$, and $[0,1]$-valued functions $\sigma_1,\ldots,\sigma_m\in\mathcal{D}(\R)$ with the following properties for $k=1,\ldots,m$:
    \begin{itemize}
        \item $B_k \cap\partial\Omega\neq\emptyset$ and $\partial\Omega\subset\bigcup_{k=1}^m B_k$,
        \item $\supp \sigma_k \subset B_k$ and $\sum_{k=1}^m\sigma_k \equiv 1$ on some neighborhood of $\partial\Omega$,
        \item $\Phi_k(B_k)\cap \Phi_k(\Omega)$ can be extended to a special Lipschitz domain $\omega_k \subset\R$.
    \end{itemize}
    Setting $\sigma_0 := (1- \sum_{k=1}^m\sigma_k) \chi_\Omega$ then yields $\sigma_0 \in\mathcal{D}(\R)$ with values in $[0,1]$ and $\supp \sigma_0\subset \Omega$ such that $\sum_{k=0}^m\sigma_k\equiv 1$ on $\Omega_\varepsilon$ with some $\varepsilon>0$.
    For $f\in \mathcal{D}'(\Omega)$ we therefore have
    $$
        f = f \sum_{k=0}^m \sigma_k \vert_\Omega
        = \sum_{k=0}^m \sigma_k \vert_\Omega f
        = \sum_{k=0}^m \sigma_k \vert_\Omega F_k\vert_\Omega
        = \sum_{k=0}^m (\sigma_k F_k)\vert_\Omega
        \qquad\text{in } \mathcal{D}'(\Omega)
    $$
    and hence 
    \begin{align*}
        \norm{f \sep \mathcal{E}^{s}_{u,p,q}(\Omega)}
        &= \norm{\sum_{k=0}^m (\sigma_k F_k)\vert_\Omega \sep \mathcal{E}^{s}_{u,p,q}(\Omega)} \\
        &\lesssim \norm{ (\sigma_0 F_0)\vert_\Omega \sep \mathcal{E}^{s}_{u,p,q}(\Omega)} + \sum_{k=1}^m \norm{ (\sigma_k F_k)\vert_\Omega \sep \mathcal{E}^{s}_{u,p,q}(\Omega)},
    \end{align*}
    where clearly $\norm{ (\sigma_0 F_0)\vert_\Omega \sep \mathcal{E}^{s}_{u,p,q}(\Omega)} \leq \norm{ \sigma_0 F_0 \sep \mathcal{E}^{s}_{u,p,q}(\R)}$.
    In order to bound the remaining terms, too, let $k\in \{1,\ldots,m\}$ be fixed. 
    Setting $\Omega_k:=\Phi_k(\Omega)$, \autoref{lem:tools_E}(ii) shows
    \begin{align*}
        \norm{ (\sigma_k F_k)\vert_\Omega \sep \mathcal{E}^{s}_{u,p,q}(\Omega)}
        &= \norm{ (\sigma_k F_k)\vert_{\Phi_k^{-1}(\Omega_k)} \sep \mathcal{E}^{s}_{u,p,q}(\Phi_k^{-1}(\Omega_k))} \\
        &\sim \norm{ (\sigma_k F_k)\vert_{\Phi_k^{-1}(\Omega_k)} \circ \Phi_k^{-1} \sep \mathcal{E}^{s}_{u,p,q}(\Omega_k)} \\
        &= \norm{ \big[(\sigma_k F_k) \circ \Phi_k^{-1}\big]\vert_{\Omega_k} \sep \mathcal{E}^{s}_{u,p,q}(\Omega_k)}.
    \end{align*}
    Since $S:=\supp \big[(\sigma_k F_k) \circ \Phi_k^{-1}\big]$ satisfies $S_\varepsilon\cap \Omega_k = S_\varepsilon\cap \omega_k$, we can apply \autoref{lem:tools_E}(iii) to conclude
    $$
        \norm{ (\sigma_k F_k)\vert_\Omega \sep \mathcal{E}^{s}_{u,p,q}(\Omega)}
        \sim \norm{ \big[(\sigma_k F_k) \circ \Phi_k^{-1}\big]\vert_{\omega_k} \sep \mathcal{E}^{s}_{u,p,q}(\omega_k)}
    $$
    which completes the proof of \eqref{eq:proof_splitting}.

    \emph{Step 2 (Switch to oscillation-based norms). }
    In order to apply \autoref{thm_osc_Rda=2} and \autoref{prop:osc_special} to \eqref{eq:proof_splitting}, we need to make sure that we actually deal with regular distributions. 
    To this end, again let $E$ be the trivial extension from $\Omega$ to $\R$; see \autoref{lem:tools_M}(i). 
    If we assume that $f \in L^\loc_{\max\{p,v\}}(\Omega)$ with $v\geq 1$, then $f$ is regular and  
    \begin{align*}
        f_0:=\sigma_0 Ef \in L^\loc_{\max\{p,v\}}(\R)
    \end{align*}
    satisfies $f_0 = \sigma_0 F_0$ in $\mathcal{S}'(\R)$, although $Ef$ might not be a valid choice for $F_0\in\mathcal{S}'(\R)$ (which explains the complicated detour). 
    Similarly, it is straightforward to check that
    \begin{align}\label{eq:proof_fk}
        f_k:=\big[(\sigma_k Ef) \circ \Phi_k^{-1}\big]\vert_{\omega_k}\in L^\loc_{\max\{p,v\}}(\omega_k), \qquad k=1,\ldots,m,
    \end{align}
    equals $\big[(\sigma_k F_k) \circ \Phi_k^{-1}\big]\vert_{\omega_k}$ in $\mathcal{D}'(\omega_k)$ for every choice of the extension $F_k$.
    Hence,
    $$
        \norm{f \sep \mathcal{E}^{s}_{u,p,q}(\Omega)}
        \lesssim \norm{ f_0 \sep \mathcal{E}^{s}_{u,p,q}(\R)} + \sum_{k=1}^m \norm{ f_k \sep \mathcal{E}^{s}_{u,p,q}(\omega_k)}
    $$
    with suitably localized $f_k\in L^\loc_{\max\{p,v\}}$  and special Lipschitz domains $\omega_k$. 
    
    Now let $\widetilde{N}:=N+L-1$ with $N\in\N$ and $L>s+d$ such that in particular $s<\widetilde{N}$. 
    If we additionally assume that $s>\sigma_{p,q}$, then \autoref{thm_osc_Rda=2} as well as \autoref{prop:osc_special} (with $R:=T:=v:=1$ and $\widetilde{N}$) imply
    \begin{align*}
        \norm{ f_0 \sep \mathcal{E}^{s}_{u,p,q}(\R)} 
        &\lesssim \norm{\int_{B(\,\cdot\,,1)} \abs{f_0(y)} \d y \sep \mathcal{M}^{u}_{p}(\R)} + \abs{f_0}^{(1,1,\widetilde{N})}_{\osc},\\
        \norm{f_k \sep \mathcal{E}^{s}_{u,p,q}(\omega_k)} 
        &\lesssim \norm{\int_{B(\,\cdot\,,1) \cap \omega_k} \abs{f_k(y)} \d y \sep \mathcal{M}^{u}_{p}(\omega_k)} + \abs{f_k}^{(1,1,\widetilde{N})}_{\osc,\omega_k}, \qquad k=1,\ldots,m,
    \end{align*}
    and it remains to estimate these terms by corresponding expressions of~$f$ on~$\Omega$.
    
    \emph{Step 3 (Estimates for $k>0$). }
    Let $k\in\{1,\ldots,m\}$ be fixed. 

    \emph{Substep 3a (Preparation). }
    Let us first show that there exists $r_k=r_k(\Omega)> 0$ such that 
    \begin{align}\label{eq:proof_avg_bound}
        \norm{ \int_{B(\,\cdot\,,r_k) \cap \omega_k} \abs{f_k(y)} \d y \sep \mathcal{M}^{u}_{p}(\omega_k)}
        &\lesssim \norm{ \int_{B(\,\cdot\,,R) \cap \Omega} \abs{f(y)} \d y  \sep \mathcal{M}^{u}_{p}(\Omega)}
    \end{align}
    with some constant that does not depend on $f$.
    
    To this end, note that by construction $\supp (f_k)\subseteq \Phi_k(B_k\cap \Omega)\subseteq \Phi_k(B_k) \cap \Phi_k(\Omega)$ has distance $r_k>0$ to $(\omega_k\setminus\Omega_k)\cup (\Omega_k\setminus\omega_k)$, where we recall that $\Omega_k=\Phi_k(\Omega)$.
    In particular, the integral
    $$
        A_k(x):=\int_{B(x,r_k) \cap \omega_k} \abs{f_k(y)} \d y = \int_{B(x,r_k) \cap \Omega_k} \abs{f_k(y)} \d y, \qquad x\in\R,
    $$
    vanishes for all $x\in\omega_k\setminus\Omega_k$.
    Hence, we can apply \autoref{lem:tools_M}(v) to derive
    \begin{align*}
        \norm{ A_k \sep \mathcal{M}^{u}_{p}(\omega_k)}
        &=\norm{ \chi_{\omega_k\cap\Omega_k}(\cdot) \left(\int_{B(\,\cdot\,,r_k) \cap \Omega_k} \abs{f_k(y)} \d y \right)\Big\vert_{\omega_k} \sep \mathcal{M}^{u}_{p}(\omega_k)} \\
        &\sim\norm{ \chi_{\omega_k\cap\Omega_k}(\cdot) \left(\int_{B(\,\cdot\,,r_k) \cap \Omega_k} \abs{f_k(y)} \d y \right)\Big\vert_{\Omega_k} \sep \mathcal{M}^{u}_{p}(\Omega_k)} \\
        &= \norm{ A_k \sep \mathcal{M}^{u}_{p}(\Omega_k)}.
    \end{align*}
    Next, \eqref{eq:proof_fk} and a transformation of measure yield
    \begin{align*}
        A_k(x) 
        &= \int_{B(x,r_k) \cap \Omega_k} \abs{(\sigma_k Ef) (\Phi_k^{-1}(y))} \d y\\
        &\sim \int_{\Phi_k^{-1}(B(x,r_k) \cap \Omega_k)} \abs{(\sigma_k Ef)(z)} \d z \\
        &\leq \int_{B(\Phi_k^{-1}(x),c\,r_k)\cap \Omega} \abs{f(z)} \d z, \qquad x\in\Omega_k,
    \end{align*}
    since $\Phi_k^{-1}(B(x,r_k) \cap \Omega_k) \subseteq B(\Phi_k^{-1}(x),c\,r_k)\cap \Omega$ for some $c=c_\Phi>0$, as well as $\sigma_k(z)\leq 1$ and $Ef(z)=f(z)$ for all $z\in\Omega$.
    Together this shows \eqref{eq:proof_avg_bound}:
    \begin{align*}
        \norm{ \int_{B(\,\cdot\,,r_k) \cap \omega_k} \abs{f_k(y)} \d y \sep \mathcal{M}^{u}_{p}(\omega_k)}
        &\lesssim \norm{ \left( \int_{B(\,\cdot\,,c\,r_k) \cap \Omega} \abs{f(z)} \d z \right)\circ \Phi_k^{-1} \sep \mathcal{M}^{u}_{p}(\Omega_k)} \\
        &\lesssim \norm{ \int_{B(\,\cdot\,,c\,r_k) \cap \Omega} \abs{f(z)} \d z  \sep \mathcal{M}^{u}_{p}(\Phi_k^{-1}(\Omega_k))} \\
        &\sim \norm{ \int_{B(\,\cdot\,,R) \cap \Omega} \abs{f(y)} \d y  \sep \mathcal{M}^{u}_{p}(\Omega)},
    \end{align*}
    where we used \autoref{lem:tools_M}(iv) as well as $\Phi_k^{-1}(\Omega_k)=\Omega$ and \autoref{lem:avg}.

    \emph{Substep 3b (Main term on $\omega_k$). }
    \autoref{lem:avg} combined with \eqref{eq:proof_avg_bound} from the previous substep immediately shows that
    \begin{align*}
        \norm{ \int_{B(\,\cdot\,,1) \cap \omega_k} \abs{f_k(y)} \d y \sep \mathcal{M}^{u}_{p}(\omega_k)}
        &\sim \norm{ \int_{B(\,\cdot\,,r_k) \cap \omega_k} \abs{f_k(y)} \d y \sep \mathcal{M}^{u}_{p}(\omega_k)} \\
        &\lesssim \norm{ \int_{B(\,\cdot\,,R) \cap \Omega} \abs{f(y)} \d y  \sep \mathcal{M}^{u}_{p}(\Omega)}.
    \end{align*}
    
    \emph{Substep 3c (Oscillation term on $\omega_k$). }
    Fix $0<t_k < \min\!\left\{1, \frac{T}{c}, \frac{R}{c}, \frac{r_k}{2} \right\}$ with $r_k$ and~$c$ as above.
    Then for $t_k<t \leq 1$ and $x\in\omega_k$ we can use $P:=0\in\mathcal{P}_{\widetilde{N}-1}$ to bound
    \begin{align*}
        \osc_{1,\omega_k}^{\widetilde{N}-1} f_k(x,t)
        = \inf_{P \in \mathcal{P}_{\widetilde{N}-1}} t^{-d} \int_{B(x,t) \cap \omega_k} \abs{f_k(y) - P(y)} \d y
        &\leq t_k^{-d} \int_{B(x,1) \cap \omega_k} \abs{f_k(y)} \d y 
    \end{align*}
    such that
    \begin{align*}
        I_1^{(k)} 
        &:= \norm{\Big( \int_{t_k}^1 \big[ t^{-s} \, \osc_{1,\omega_k}^{\widetilde{N}-1} f_k(\cdot,t) \big]^{q} \frac{\d t}{t} \Big)^{\frac{1}{q}} \sep \mathcal{M}^{u}_{p}(\omega_k)} \\
        &\lesssim \norm{\Big( \int_{t_k}^1 \Big[ t^{-s} \, \int_{B(\,\cdot\,,1) \cap \omega_k} \abs{f_k(y)} \d y \Big]^{q} \frac{\d t}{t} \Big)^{\frac{1}{q}} \sep \mathcal{M}^{u}_{p}(\omega_k)} \\
        &\sim \norm{ \int_{B(\,\cdot\,,1) \cap \omega_k} \abs{f_k(y)} \d y \sep \mathcal{M}^{u}_{p}(\omega_k)}
    \end{align*}
    which can be bounded as in Substep 3b.

    For the remaining part of the integral in $\abs{f_k}^{(1,1,\widetilde{N})}_{\osc,\omega_k}$ we again use the support properties of $f_k$. Since $t_k < \frac{r_k}{2}$ they imply that for all $0<t \leq t_k$ and $x\in\omega_k$
    \begin{align*}
        \osc_{1,\omega_k}^{\widetilde{N}-1} f_k(x,t) = \begin{cases}
            0, & x\in\omega_k\setminus\Omega_k,\\
            \displaystyle\inf_{P \in \mathcal{P}_{\widetilde{N}-1}} t^{-d} \int_{B(x,t) \cap \Omega_k} \abs{ f_k(y) - P(y)} \d y , & x\in\omega_k\cap\Omega_k.
        \end{cases}
    \end{align*}
    For every fixed $P\in\mathcal{P}_{\widetilde{N}-1}$ and $x\in\omega_k\cap\Omega_k$, Formula \eqref{eq:proof_fk} and a transformation of measure as above further yield
    \begin{align*}
        \int_{B(x,t) \cap \Omega_k} \abs{ f_k(y) - P(y)} \d y
        &= \int_{B(x,t) \cap \Omega_k} \abs{(\sigma_k Ef)(\Phi_k^{-1}(y)) - (P\circ \Phi_k)(\Phi_k^{-1}(y))} \d y \\
        &\sim \int_{\Phi_k^{-1}(B(x,t) \cap \Omega_k)} \abs{(\sigma_k Ef)(z) - (P\circ \Phi_k)(z)} \d z \\
        &\leq \int_{B(\Phi_k^{-1}(x), c\,t) \cap \Omega} \abs{(\sigma_k Ef)(z) - (P\circ \Phi_k)(z)} \d z,
    \end{align*}
    where we note that  $(P\circ \Phi_k)\in\mathcal{P}_{\widetilde{N}-1}$.
    Together this shows
    \begin{align*}
        \osc_{1,\omega_k}^{\widetilde{N}-1} f_k(x,t)
        &\lesssim \chi_{\omega_k\cap\Omega_k}(x) \inf_{\widetilde{P} \in \mathcal{P}_{\widetilde{N}-1}} (c\,t)^{-d} \int_{B(\Phi_k^{-1}(x),c\,t)\cap \Omega} \abs{(\sigma_k Ef)(z) - \widetilde{P}(z)} \d z  \\
        &=\chi_{\omega_k\cap\Omega_k}(x) \, E\Big(\osc_{1,\Omega}^{\widetilde{N}-1} \big[(\sigma_k Ef)\vert_\Omega\big](\Phi_k^{-1}(\cdot),c\,t)\Big)(x), \qquad x\in \omega_k,\, 0<t \leq t_k.
    \end{align*}
    So using \autoref{lem:tools_M}(v) and (iv) we can estimate
    \begin{align*}
        I_0^{(k)} 
        &:=\norm{\Big( \int_{0}^{t_k} \Big[ t^{-s} \, \osc_{1,\omega_k}^{\widetilde{N}-1} f_k(\cdot,t) \Big]^{q} \frac{\d t}{t} \Big)^{\frac{1}{q}} \sep \mathcal{M}^{u}_{p}(\omega_k)} \\
        &\lesssim \norm{\bigg[ \chi_{\omega_k\cap\Omega_k}(\cdot) \Big( \int_{0}^{t_k} \Big[ t^{-s} \, E\Big(\osc_{1,\Omega}^{\widetilde{N}-1} \big[(\sigma_k Ef)\vert_\Omega\big](\Phi_k^{-1}(\cdot),c\,t)\Big) \Big]^{q} \frac{\d t}{t} \Big)^{\frac{1}{q}} \bigg]\bigg\vert_{\omega_k} \sep \mathcal{M}^{u}_{p}(\omega_k)} \\
        &\sim \norm{\bigg[ \chi_{\omega_k\cap\Omega_k}(\cdot) \Big( \int_{0}^{t_k} \Big[ t^{-s} \, E\Big(\osc_{1,\Omega}^{\widetilde{N}-1} \big[(\sigma_k Ef)\vert_\Omega\big](\Phi_k^{-1}(\cdot),c\,t)\Big) \Big]^{q} \frac{\d t}{t} \Big)^{\frac{1}{q}} \bigg] \bigg\vert_{\Omega_k} \sep \mathcal{M}^{u}_{p}(\Omega_k)}\\
        &\lesssim \norm{ \Big( \int_{0}^{c\,t_k} \Big[ \tau^{-s} \, \osc_{1,\Omega}^{\widetilde{N}-1} \big[ (\sigma_k Ef)\vert_\Omega \big](\ast,\tau) \Big]^{q} \frac{\d \tau}{\tau} \Big)^{\frac{1}{q}} \circ \Phi_k^{-1}  \sep \mathcal{M}^{u}_{p}(\Omega_k)}\\
        &\sim \norm{\Big( \int_{0}^{c\,t_k} \Big[ t^{-s} \, \osc_{1,\Omega}^{\widetilde{N}-1} \big[ (\sigma_k Ef)\vert_\Omega \big](\cdot,t) \Big]^{q} \frac{\d t}{t} \Big)^{\frac{1}{q}} \sep \mathcal{M}^{u}_{p}(\Omega)}.
    \end{align*}
    In order to further bound this quantity in terms of an oscillation of $f$, we need to get rid of the smooth cut-off function $\sigma_k$.
    For this purpose, we now let $x\in\Omega$ and $0<t\leq c\,t_k$ be fixed and use the following idea due to Triebel \cite[p.191]{Tr92}:
    Let $T_k\in\mathcal{P}_{L-1}$ be the Taylor polynomial of degree $L-1$ of $\sigma_k\in\mathcal{D}(\R)$ around $x$. 
    If $R_k$ denotes its remainder, we have
    $$
        \sigma_k(y) = T_k(y) + R_k(y)
        \qquad\text{with}\quad 
        \abs{T_k(y)}\lesssim 1 \quad\text{and}\quad \abs{R_k(y)}\lesssim t^{L}, \qquad y\in B(x,t)\cap\Omega,
    $$
    with implied constants independent of $x$ and $t$.
    Hence, $\widetilde{N}=N+L-1$ yields
    \begin{align*}
        &\osc_{1,\Omega}^{\widetilde{N}-1} \big[(\sigma_k Ef)\vert_\Omega\big](x,t) \\
        &\quad=\inf_{\widetilde{P} \in \mathcal{P}_{N-1}} t^{-d} \int_{B(x,t) \cap \Omega} \abs{(T_k+R_k)(y)\, Ef(y) - T_k(y)\,\widetilde{P}(y)} \d y  \\
        &\quad\lesssim\inf_{\widetilde{P} \in \mathcal{P}_{N-1}} \Big( t^{-d} \int_{B(x,t) \cap \Omega} \abs{Ef(y) - \widetilde{P}(y)} \d y \Big) +   t^{-d} t^{L} \int_{B(x,t) \cap \Omega} \abs{Ef(y)} \d y  \\
        &\quad\leq \osc_{1,\Omega}^{N-1} f(x,t) + t^{L-d} \int_{B(x,c\,t_k)\cap \Omega} \abs{f(y)} \d y , \qquad x\in\Omega,\; 0<t\leq c\, t_k,
    \end{align*}
    where due to $L>s+d$ and $t_k< \frac{R}{c}$ there holds
    $$
        \Big( \int_{0}^{c\,t_k} \Big[ t^{-s} t^{L-d} \int_{B(x,c\,t_k)\cap \Omega} \abs{f(y)} \d y \Big]^{q} \frac{\d t}{t} \Big)^{\frac{1}{q}}
        \lesssim \int_{B(x,R)\cap \Omega} \abs{f(y)} \d y.
    $$
    Therefore, we derive
    \begin{align*}
        I_0^{(k)} &\lesssim \norm{\Big( \int_{0}^{c\,t_k} \Big[ t^{-s} \, \osc_{1,\Omega}^{N-1} f(\cdot,t) \Big]^{q} \frac{\d t}{t} \Big)^{\frac{1}{q}} \sep \mathcal{M}^{u}_{p}(\Omega)}
        + \norm{\int_{B(\,\cdot\,,R)\cap \Omega} \abs{f(y)} \d y \sep \mathcal{M}^{u}_{p}(\Omega)}
    \end{align*}
    and since $t_k < \frac{T}{c}$ this whole substep yields
    \begin{align*}
        \abs{f_k}^{(1,1,\widetilde{N})}_{\osc,\omega_k} 
        &\lesssim I_0^{(k)} + I_1^{(k)} 
        \lesssim \norm{\int_{B(\,\cdot\,,R)\cap \Omega} \abs{f(y)} \d y \sep \mathcal{M}^{u}_{p}(\Omega)} + \abs{f}^{(T,1,N)}_{\osc,\Omega}, 
        \qquad k=1,\ldots,m.
    \end{align*}
    
    \emph{Step 4 (Estimates for $k=0$). }
    We can follow the (complete) previous Step~3 line by line (formally setting $k:=0$, $\omega_k:=\R$, and $\Phi_k:=\mathrm{id}$ such that $c=1$) to show that also
    \begin{align*}
        \norm{\int_{B(\,\cdot\,,1)} \abs{f_0(y)} \d y \sep \mathcal{M}^{u}_{p}(\R)}
        &\lesssim \norm{\int_{B(\,\cdot\,,R)\cap \Omega} \abs{f(y)} \d y \sep \mathcal{M}^{u}_{p}(\Omega)} 
    \end{align*}
    as well as
    \begin{align*}
        \abs{f_0}^{(1,1,\widetilde{N})}_{\osc}  
        \lesssim I_0^{(0)} + I_1^{(0)} 
        \lesssim \norm{\int_{B(\,\cdot\,,R)\cap \Omega} \abs{f(y)} \d y \sep \mathcal{M}^{u}_{p}(\Omega)} + \abs{f}^{(T,1,N)}_{\osc,\Omega}.
    \end{align*}
    
    \emph{Step 5 (Conclusion). }
    A combination of Steps 1--4 yields the desired bound with $v=1$,
    \begin{align*}
        \norm{f \sep \mathcal{E}^{s}_{u,p,q}(\Omega)} 
        \lesssim \norm{\int_{B(\,\cdot\,,R)\cap \Omega} \abs{f(y)} \d y \sep \mathcal{M}^{u}_{p}(\Omega)} + \abs{f}^{(T,1,N)}_{\osc,\Omega},
    \end{align*}
    and the proof is finished by the arguments from Step~5 in the proof of \autoref{prop:diff_special}.
\end{proof}

\section{Characterization of \texorpdfstring{$\mathcal{E}^{s}_{u,p,q}(\Omega)$}{Esupq(Omega)} on Bounded Convex Lipschitz Domains via Differences}\label{Sec_Diff1_re}

In \autoref{Sec_Diff1} we already have proven a characterization in terms of higher order differences for Triebel-Lizorkin-Morrey spaces defined on special Lipschitz domains; cf.\ \autoref{thm_main_2}(ii). 
In what follows we will deduce counterparts for those results given that~$\Omega$ is a bounded convex Lipschitz domain. 
For that purpose, we combine our findings concerning local oscillations (\autoref{prop:boundedLip_osc_RTv}) with some specially tailored Whitney-type estimate proven in \cite{DekLev}. 
To formulate it, we require the following notation. 
For $d\in\N$ let $\Omega \subset \mathbb{R}^d$ be a domain and let $x \in \Omega$ and $h \in \mathbb{R}^d$.  
For $f \in L_{v}^{\loc}(\Omega)$ with $0 < v \leq \infty$ and $N\in\N$ we then put
\begin{align*}
        \Delta^{N}_{h,\Omega}f(x) := \begin{cases}
        \Delta^{N}_{h}f(x),      & \qquad [  x, x + Nh  ] \subset \Omega,  \\
         0,                      &  \qquad  \text{otherwise},
        \end{cases}
    \end{align*}
where $[a,b]$ denotes the line segment with end points $a$ and $b$. 

\begin{lem}[{\cite[Theorem 1.4]{DekLev}}]\label{lem_Whitney_est1}
    For $d\in\N$ let $\Omega \subset \mathbb{R}^d$ be a bounded convex Lipschitz domain, let $N \in \mathbb{N}$ and $0 < v \leq \infty$. 
    Then there exists a constant $C > 0$ independent of $\Omega$ such that for all $f \in L_{v}^{\loc}(\Omega)$
    \begin{align*}
        \inf_{P \in \mathcal{P}_{N-1}} \Big( \int_{\Omega} \abs{f(x) - P(x)}^{v} \d x \Big)^{\frac{1}{v}} 
        \leq C \sup_{\abs{h} \leq \mathrm{diam}(\Omega) } \Big(  \int_{\Omega} \abs{\Delta^{N}_{h,\Omega}f(x)}^{v} \d x  \Big )^{\frac{1}{v}}.
    \end{align*}
\end{lem}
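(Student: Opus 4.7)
The plan is to reduce to the smooth case and then to lift the classical Whitney inequality from a suitable inscribed ball to all of $\Omega$ by exploiting convexity. Since both sides of the asserted inequality are continuous under $L_v^\loc$-limits of $f$, a standard mollification argument allows us to assume that $f$ is smooth (the case $v=\infty$ being treated by analogous modifications throughout). By translation and a uniform dilation we may further normalize $\mathrm{diam}(\Omega)=1$, so that both sides scale by the same power of the diameter and the claim reduces to a uniform bound.

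The first ingredient is the well-known Whitney inequality on Euclidean balls: for every ball $B\subset\R$ there exists a polynomial $P_0\in\mathcal{P}_{N-1}$ with $(\int_B|f-P_0|^v)^{1/v}\ls\sup_{|h|\leq\mathrm{diam}(B)}(\int_B|\Delta_{h,B}^N f|^v)^{1/v}$, an implied constant depending only on $N,d,v$. Using the convexity and Lipschitz assumptions on $\Omega$, fix an inscribed ball $B_\ast\subset\Omega$ whose radius is comparable to $\mathrm{diam}(\Omega)$, and let $P_0$ be near-best on $B_\ast$ as above. The central analytic tool is then the pointwise identity
\begin{align*}
    f(x)-P_0(x) \;=\; \Delta_{h}^{N}f(y)\;-\;\sum_{k=0}^{N-1}(-1)^{N-k}\binom{N}{k}\bigl[f(y+kh)-P_0(y+kh)\bigr],
\end{align*}
valid for every $y\in B_\ast$ and $x\in\Omega$ with $h:=(x-y)/N$; it follows from $\Delta_h^N P_0\equiv 0$ together with the observation that, by convexity, $[y,x]\subset\Omega$ and hence $\Delta_h^N f(y)=\Delta_{h,\Omega}^N f(y)$. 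Integrating this identity in $x$ over a suitable subset of $\Omega$, averaging in $y\in B_\ast$, and using the change of variables $z=y+kh$ (with Jacobian depending only on $k,N,d$) converts each summand on the right into an $L_v$-integral of $f-P_0$ over a strict dilate of $\Omega$ about the center of $B_\ast$.

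Iterating this step along a finite chain of nested convex subdomains $B_\ast=\Omega^{(0)}\subset\Omega^{(1)}\subset\cdots\subset\Omega^{(M)}=\Omega$, obtained by dilating $\Omega$ about the center of $B_\ast$ with factors increasing from a small $\lambda_0>0$ to $1$, one controls $\|f-P_0\|_{L_v(\Omega^{(j+1)})}$ by $\sup_{|h|\leq 1}\|\Delta_{h,\Omega}^N f\|_{L_v(\Omega)}$ plus a contractive multiple of $\|f-P_0\|_{L_v(\Omega^{(j)})}$, and a geometric series yields the desired bound. The main obstacle is precisely this bookkeeping: the changes of variables generate multiplicative constants that must be compensated by the contraction of the sub-domains, and one has to choose the dilation factors $\lambda_j$ so that (i) each $\Omega^{(j)}$ is strictly contained in the next, (ii) all segments $[y,x]$ appearing in the averaged identity remain inside $\Omega$, and (iii) the accumulated constants depend only on $N$, $d$, $v$. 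Convexity is essential at (ii) and forces the geometry to behave uniformly; the Lipschitz character enters only quantitatively through the radius of $B_\ast$. The general (non-smooth) case is then recovered by density, since mollifications of $f$ converge in $L_v^\loc$ and both sides of the inequality pass to the limit.
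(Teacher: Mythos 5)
The paper does not prove this lemma at all: it is quoted verbatim from Dekel--Leviatan \cite[Theorem~1.4]{DekLev}, so there is no in-paper argument to compare with; what follows assesses your sketch on its own terms.

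Your proof has a genuine gap, and it sits exactly where the real difficulty of the theorem lies: the constant $C$ must be \emph{independent of} $\Omega$. Your entire construction is anchored on ``an inscribed ball $B_\ast\subset\Omega$ whose radius is comparable to $\mathrm{diam}(\Omega)$''. No such ball exists with a comparability constant that is uniform over the class of all bounded convex domains (note that every bounded convex domain is Lipschitz, so the Lipschitz hypothesis buys no quantitative fatness): for a flat ellipsoid with semi-axes $1,\varepsilon,\dots,\varepsilon$ the ratio of inradius to diameter is of order $\varepsilon$. Consequently the radius of $B_\ast$, the initial dilation factor $\lambda_0$, and the number $M$ of chaining steps in your iteration all degenerate as the domain becomes thin, and the constant you produce depends on $\mathrm{diam}(\Omega)/\mathrm{inradius}(\Omega)$. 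What you sketch is essentially the classical Whitney argument for ``fat'' convex domains; removing the dependence on the eccentricity is precisely the content of \cite{DekLev} and requires genuinely different ideas (John-ellipsoid normalization together with a careful analysis of how the anisotropic affine map distorts the $N$-th differences). This uniformity is not a cosmetic point here: in \autoref{Sec_Diff1_re} the lemma is applied to the whole family $B(x,t)\cap\Omega$, $x\in\Omega$, $t>0$, so a constant depending on the individual convex set would need a separate (and nontrivial) uniform-eccentricity argument for that family.

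Two further, smaller problems. First, your recursion $a_{j+1}\le \omega + \theta\, a_j$ is not contractive: since $\Omega^{(j)}\subset\Omega^{(j+1)}$ there is nothing to gain from the nesting, and the change of variables together with the binomial coefficients forces $\theta>1$ (something like $\sum_{k=1}^{N-1}\binom{N}{k}(N/k)^{d/v}$). The scheme can still close, but only because $M$ is finite --- and $M$ again grows without bound as the domain thins, reinforcing the main gap. Second, the opening reduction to smooth $f$ by mollification is both unnecessary (the pointwise identity you use requires no smoothness) and delicate for $0<v<1$, where $f\in L_v^{\loc}$ need not be locally integrable so mollification is not even well defined, and for $v=\infty$, where mollifications do not converge in the relevant norm unless $f$ is uniformly continuous.
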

There exist various different versions of \autoref{lem_Whitney_est1}. 
For example a corresponding statement for the special case that $\Omega$ is a cube is shown in \cite[Theorem A.1]{HN}. 
Here also a comprehensive discussion of the history of such Whitney-type estimates can be found. 

Now we are well-prepared to prove the following estimate for bounded convex Lipschitz domains. 
In combination with \autoref{prop:Omega_lower} it particularly proves \autoref{thm_main_2}(iii).
\begin{prop}
    For $d\in\N$ let $ \Omega \subset \mathbb{R}^{d}$ be a bounded convex Lipschitz domain. Further let $1 < p \leq u < \infty$, $1 < q \leq \infty$, $0 < T \leq \infty$, $0<R<\infty$,  $N\in \mathbb{N}$, and $s>0$. 
    Then for $f \in L^\loc_{\infty}(\Omega)$ there holds
    \begin{align*}
        \norm{ f \sep \mathcal{E}^{s}_{u,p,q}(\Omega) } 
        & \lesssim  \norm{ \esssup_{y\in B(\,\cdot\,, R) \cap \Omega} \abs{f(y)} \sep \mathcal{M}^{u}_{p}( \Omega)} + \abs{f}^{(T,\infty,N)}_{\Delta,\Omega} 
    \end{align*}
    as well as $\norm{ f \sep \mathcal{E}^{s}_{u,p,q}(\Omega) } \lesssim  \norm{ f \sep  \mathcal{M}^{u}_{p}( \Omega)} + \abs{f}^{(T,\infty,N)}_{\Delta,\Omega}$.
    In both cases, the implied constants are independent of $f$.
\end{prop}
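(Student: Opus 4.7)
The plan is to combine the oscillation characterization from Proposition \ref{prop:boundedLip_osc_RTv} with the Whitney-type bound of Lemma \ref{lem_Whitney_est1}. Since $p,q>1$ gives $\sigma_{p,q}=0<s$ and $v=\infty\geq 1$, Proposition \ref{prop:boundedLip_osc_RTv} applied with $v=\infty$ yields
\begin{align*}
    \norm{f \sep \mathcal{E}^{s}_{u,p,q}(\Omega)}
    \lesssim \norm{\esssup_{y\in B(\,\cdot\,, R)\cap\Omega} \abs{f(y)} \sep \mathcal{M}^{u}_{p}(\Omega)} + \abs{f}^{(T,\infty,N)}_{\osc,\Omega},
\end{align*}
so the first summand already matches the desired bound and it remains to control $\abs{f}^{(T,\infty,N)}_{\osc,\Omega}$ by $\abs{f}^{(T,\infty,N)}_{\Delta,\Omega}$ (modulo lower-order terms). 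Fix $x\in\Omega$ and $0<t<T$. The set $\Omega_{x,t}:=B(x,t)\cap\Omega$ is itself a bounded convex Lipschitz domain, as convexity is preserved by intersection with a ball. Applying Lemma \ref{lem_Whitney_est1} on $\Omega_{x,t}$ with $v=\infty$ gives
\begin{align*}
    \osc^{N-1}_{\infty,\Omega}f(x,t)
    \lesssim \sup_{\abs{h}\leq 2t}\,\esssup_{y\in \Omega_{x,t}} \abs{\Delta^{N}_{h,\Omega_{x,t}}f(y)},
\end{align*}
where the cut-off in $\Delta^{N}_{h,\Omega_{x,t}}$ enforces $[y,y+Nh]\subseteq\Omega_{x,t}$, hence $\abs{h}\leq 2t/N$, $[y,y+Nh]\subseteq\Omega$ and $h\in V^{N}(y,3t/N)$. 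Consequently
\begin{align*}
    \osc^{N-1}_{\infty,\Omega}f(x,t) \lesssim \esssup_{y\in B(x,t)\cap\Omega} G(y,3t/N),
    \qquad G(z,r):=\esssup_{h\in V^{N}(z,r)}\abs{\Delta^{N}_h f(z)},
\end{align*}
and $G(\,\cdot\,,r)$ is precisely the integrand inside $\abs{f}^{(T,\infty,N)}_{\Delta,\Omega}$.

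The main obstacle is to pass from the essential supremum over $y\in B(x,t)\cap\Omega$ to a pointwise quantity at $x$ inside the outer $\mathcal{M}^{u}_{p}(\Omega)$-norm and the $L_q$-integration against $t^{-s}\,\d t/t$. The intended strategy is to choose $\mu\in(0,\min\{p,q\})$ and apply Lemma \ref{lem:tools_M}(vi) to transfer the estimate to $\mathcal{M}^{u/\mu}_{p/\mu}$, where the integrability exponents $p/\mu,q/\mu$ exceed one. After raising to the power $\mu$, one dominates the essential supremum by a Peetre-type maximal function of $G(\,\cdot\,,3t/N)^{\mu}$, which in turn is controlled pointwise by the Hardy-Littlewood maximal function at comparable scale. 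The vector-valued Morrey maximal inequality of Lemma \ref{l_ineq1} then produces
\begin{align*}
    \abs{f}^{(T,\infty,N)}_{\osc,\Omega} \lesssim \abs{f}^{(T,\infty,N)}_{\Delta,\Omega}.
\end{align*}
Convexity of $\Omega$ is essential here: it guarantees that $\Omega_{x,t}$ is convex so that the Whitney constant in Lemma \ref{lem_Whitney_est1} is domain-independent, and it keeps the line segments used in the Peetre-type comparison inside $\Omega$.

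Finally, the replacement of $\norm{\esssup_{y\in B(\,\cdot\,,R)\cap\Omega}\abs{f(y)}\sep\mathcal{M}^{u}_{p}(\Omega)}$ by $\norm{f\sep\mathcal{M}^{u}_{p}(\Omega)}$ is a Sobolev-type bound reflecting the assumption $s>d/p$ (which follows from $p,q>1$ together with \eqref{cond_on_s2}). Using the convexity of $\Omega$ together with a higher-order mean-value-type representation, one expresses $f(y)$ for $y\in B(x,R)\cap\Omega$ as $f(x)$ plus line-integrals of $\Delta^{N}_{h}f$ along $[x,y]\subset\Omega$; the difference contribution is absorbed into $\abs{f}^{(T,\infty,N)}_{\Delta,\Omega}$ via the same maximal-function device, leaving $\norm{f\sep\mathcal{M}^{u}_{p}(\Omega)}$ on the right-hand side.
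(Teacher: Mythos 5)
Your overall plan (oscillation characterization on the bounded convex domain plus the Whitney estimate of Lemma~\ref{lem_Whitney_est1}, followed by the vector-valued maximal inequality of Lemma~\ref{l_ineq1}) is the same as the paper's, but the way you set it up creates a gap at the decisive step. You apply Proposition~\ref{prop:boundedLip_osc_RTv} and Lemma~\ref{lem_Whitney_est1} with $v=\infty$, which leaves you with $\esssup_{y\in B(x,t)\cap\Omega} G(y,3t/N)$ and the problem of replacing this essential supremum over a ball by a pointwise quantity at $x$. Your proposed fix --- raise to a power $\mu<\min\{p,q\}$ and dominate by a ``Peetre-type maximal function'' which is then controlled by the Hardy--Littlewood maximal function --- does not work for a general measurable function $G$: the inequality $\esssup_{y\in B(x,t)}\abs{g(y)}\lesssim \big(\textit{\textbf{M}}(\abs{g}^\mu)(x)\big)^{1/\mu}$ is the Plancherel--P\'olya--Nikolskii/Peetre inequality and is only valid for band-limited $g$ (a narrow spike of $g$ near the boundary of $B(x,t)$ makes the left-hand side large while the averages on the right stay small). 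Nothing in your construction makes $G(\,\cdot\,,3t/N)$ band-limited, so this step fails as stated.

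The paper avoids the issue by mixing the exponents: it applies Proposition~\ref{prop:boundedLip_osc_RTv} with $v=1$ (legitimate since $L^\loc_\infty(\Omega)\subset L^\loc_{\max\{p,1\}}(\Omega)$ and $\sigma_{p,q}=0<s$), and then Lemma~\ref{lem_Whitney_est1} with $v=1$ on $B(x,t)\cap\Omega$. The Whitney bound then produces an \emph{integral average} over $y\in B(x,t)\cap\Omega$ of $\esssup_{h}\abs{\Delta^N_h f(y)}$ (the supremum in $h$ is taken pointwise in $y$ \emph{before} averaging), and an integral average over a ball containing $x$ is genuinely dominated by $\textit{\textbf{M}}\big(\chi_\Omega\esssup_{h\in V^N(\,\cdot\,,2t)}\abs{\Delta^N_h f(\,\cdot\,)}\big)(x)$; since $p,q>1$, Lemma~\ref{l_ineq1} applies directly with no need for the power trick. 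I recommend you restructure your argument along these lines. Note also that your last paragraph has the logic reversed: the bound with $\norm{f\sep\mathcal{M}^u_p(\Omega)}$ on the right-hand side is the \emph{stronger} of the two (since $\norm{f\sep\mathcal{M}^u_p(\Omega)}\leq\big\Vert\esssup_{y\in B(\,\cdot\,,R)\cap\Omega}\abs{f(y)}\,\big|\,\mathcal{M}^u_p(\Omega)\big\Vert$ trivially), so the paper proves that one directly and the other follows for free, whereas your sketch of deducing it via a ``higher-order mean-value representation'' is both unnecessary in the correct setup and not justified as written.
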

\begin{proof}
    W.l.o.g.\ assume $q<\infty$ and choose $J\in\N$ with $2^{-J+2}\leq T$.
    Further note that for $v:=1$ our assumptions imply that $L^\loc_{\infty}(\Omega) \subset L^\loc_{\max\{p,v\}}(\Omega)$ and $\sigma_{p,q}=0< s$.
    Consequently, we can apply \autoref{prop:boundedLip_osc_RTv} (with $T:=2^{-J}$) to conclude
    $$
        \norm{ f \sep \mathcal{E}^{s}_{u,p,q}(\Omega)}
        \lesssim \norm{ f \sep  \mathcal{M}^{u}_{p}( \Omega)} + \abs{f}^{(2^{-J},1,N)}_{\osc,\Omega},
    $$
    where clearly $\norm{ f \sep  \mathcal{M}^{u}_{p}( \Omega)} \leq \big\Vert \esssup_{y\in B(\,\cdot\,, R) \cap \Omega} \abs{f(y)} \big| \mathcal{M}^{u}_{p}( \Omega) \big\Vert$.
    Therefore, it suffices to upper bound $\abs{f}^{(2^{-J},1,N)}_{\osc,\Omega}$ by $\abs{f}^{(T,\infty,N)}_{\Delta,\Omega}$.
    For this purpose, we employ the Whitney-type estimate \autoref{lem_Whitney_est1} for the bounded convex Lipschitz domains $B(x,t) \cap \Omega$ with
    $x \in \Omega$ and $0 < t \leq 2^{-J}$ which gives
    \begin{align*}
        t^{-s} \, \osc_{1,\Omega}^{N-1} f(x,t) 
        &= t^{-s-d} \inf_{P \in \mathcal{P}_{N-1}} \int_{B(x,t) \cap \Omega} \abs{f(y) - P(y)} \d y \\ 
        &\lesssim t^{-s-d}  \sup_{\abs{h} \leq \mathrm{diam}(B(x,t) \cap \Omega)}  \int_{B(x,t) \cap \Omega} \abs{\Delta^{N}_{h, B(x,t)\cap\Omega} f(y)} \d y \\
        &\lesssim (2t)^{-s} \abs{B(x,t)}^{-1} \int_{B(x,t) } \chi_{\Omega}(y) \esssup_{h\in V^N(y,2t)} \abs{\Delta^{N}_{h} f(y)} \d y \\
        &\leq \textit{\textbf{M}} \bigg( (2t)^{-s} \, \chi_{\Omega}(\ast) \esssup_{h\in V^N(\ast,2t)} \abs{\Delta^{N}_{h} f(\ast)} \bigg)(x)
    \end{align*}
    with \textit{\textbf{M}} being the Hardy-Littlewood maximal operator.
    We then find
    \begin{align*}
        \int_0^{2^{-J}} \big[ t^{-s} \, \osc_{1,\Omega}^{N-1} f(x,t) \big]^{q} \frac{\d t}{t}
        &= \sum_{j=J-1}^\infty \int_{2^{-(j+2)}}^{2^{-(j+1)}} \big[ t^{-s} \, \osc_{1,\Omega}^{N-1} f(x,t) \big]^{q} \frac{\d t}{t} \\
        &\lesssim \sum_{j=J-1}^\infty 2^{(j+1)sq} \, \big[ \osc_{1,\Omega}^{N-1} f(x,2^{-(j+1)}) \big]^{q}  \\
        &\lesssim \sum_{j=J-1}^\infty \bigg| \textit{\textbf{M}} \bigg( (2^{-j})^{-s} \, \chi_{\Omega}(\ast) \esssup_{h\in V^N(\ast,2^{-j})} \abs{\Delta^{N}_{h} f(\ast)} \bigg)(x) \bigg|^q 
    \end{align*}
    for every $x\in\Omega$ such that \autoref{lem:tools_M}(ii) and \autoref{l_ineq1} imply
    \begin{align*}
        \abs{f}^{(2^{-J},1,N)}_{\osc,\Omega}
        &\lesssim \norm{ \bigg( \sum_{j=J-1}^\infty \bigg| \textit{\textbf{M}} \bigg( (2^{-j})^{-s} \, \chi_{\Omega}(\ast) \esssup_{h\in V^N(\ast,2^{-j})} \abs{\Delta^{N}_{h} f(\ast)} \bigg)(\cdot) \bigg|^q \bigg)^{\frac{1}{q}} \sep \mathcal{M}^{u}_{p}(\R)} \\
        &\lesssim \norm{ \bigg( \sum_{j=J-1}^\infty \bigg| (2^{-j})^{-s} \, \chi_{\Omega}(\cdot) \esssup_{h\in V^N(\cdot,2^{-j})} \abs{\Delta^{N}_{h} f(\cdot)} \bigg|^q \bigg)^{\frac{1}{q}} \sep \mathcal{M}^{u}_{p}(\R)}.
    \end{align*}
    Finally, we employ \autoref{lem:tools_M}(v) to conclude
    \begin{align*}
        \abs{f}^{(2^{-J},1,N)}_{\osc,\Omega}
        &\leq \norm{ \bigg( \sum_{j=J-1}^\infty 2^{jsq} \Big[ \esssup_{h\in V^N(\cdot,2^{-j})} \abs{\Delta^{N}_{h} f(\cdot)} \Big]^q \bigg)^{\frac{1}{q}} \sep \mathcal{M}^{u}_{p}(\Omega)} \\
        &\lesssim \norm{ \bigg( \sum_{j=J-1}^\infty \!\int_{2^{-j}}^{2^{-(j-1)}}\!\! \Big[ t^{-s} \esssup_{h\in V^N(\cdot,t)}\! \abs{\Delta^{N}_{h} f(\cdot)}\! \Big]^q \frac{\d t}{t} \bigg)^{\frac{1}{q}} \sep \mathcal{M}^{u}_{p}(\Omega)} 
        = \abs{f}^{(2^{-J+2},\infty,N)}_{\Delta,\Omega}
    \end{align*}
    so that the use of $2^{-J+2}\leq T$ finishes the proof.
\end{proof}

\section{Summary and Further Issues}
Throughout this paper we obtained several characterizations in terms of local oscillations for the Triebel-Lizorkin-Morrey spaces $\mathcal{E}^{s}_{u,p,q}$ defined either on $\mathbb{R}^d$ or on special or bounded Lipschitz domains. 
Moreover, as a byproduct we also found new characterizations via differences of higher order. 
Nevertheless, there are still some open questions. Some of them (which also will be subject of future research) can be found in the following list:

\begin{enumerate}
    \item In our main results (Theorems \ref{mainresult1} and \ref{thm_main_2}) the additional condition $v \geq 1$ shows up if~$\Omega$ is a Lipschitz domain. 
    This restriction seems to have technical reasons only and stems from \autoref{prop:diff_special}. 
    Consequently, the natural question arises whether it is possible to modify the proof of \autoref{prop:diff_special} such that the condition $v \geq 1$ can be dropped.
    In this case, we could also drop the restriction $p\geq 1$ in part (ii) of Theorems \ref{mainresult1} and \ref{thm_main_2}, respectively; see Step~2 in the proof of \autoref{thm_osc_Rda=2}.
    
    \item If $\Omega$ is a bounded Lipschitz domain, we only have a characterization in terms of differences for $\mathcal{E}^{s}_{u,p,q}(\Omega)$ under very restrictive conditions on the parameters. 
    Moreover, $\Omega$ has to be convex. 
    Those restrictions are coming from the Whitney-type estimate given in \autoref{lem_Whitney_est1}. Therefore it would be desirable to have advanced counterparts of \autoref{lem_Whitney_est1} that hold for arbitrary bounded Lipschitz domains and where the supremum on the right-hand side is replaced by an integral.
    
    \item In our main results several conditions concerning the parameter $s$ show up, see \eqref{cond_on_s} and \eqref{cond_on_s2}. Some of them seem to be necessary. However, in particular if $p < 1$ or $q < 1$ there are still some open questions concerning necessity. 
    Consequently, we want to know whether these conditions are sharp and necessary. Some first results concerning this topic can be found in \cite{Ho1}, see also \cite{HoSi20} and \cite{HoN}.
\end{enumerate}

\medskip

\noindent
\textbf{Acknowledgments:} 
Marc Hovemann has been supported by Deutsche Forschungs\-gemeinschaft (DFG), grant DA 360/24-1.
Moreover, the authors are grateful to Winfried Sickel and Stephan Dahlke for several valuable discussions.

\addcontentsline{toc}{section}{References}
\small


\begin{thebibliography}{999}

\bibitem{BalDieWei}
A.Kh. Balci, L. Diening and M. Weimar, \emph{Higher order {C}alder\'{o}n-{Z}ygmund estimates for the {$p$}-{L}aplace equation}, J. Differential Equations \textbf{268} (2020), 590--635.

\bibitem{CaNoSte}
C. Canuto, R.H. Nochetto, R. Stevenson and M. Verani, \emph{Convergence and optimality of hp-AFEM}, Numer. Math. \textbf{135} (2017), 1073--1119.

\bibitem{CioWei}
P.A. Cioica-Licht and M. Weimar, \emph{On the limit regularity in {S}obolev and {B}esov scales related to approximation theory}, J. Fourier Anal. Appl. \textbf{26}(1), Art. 10 (2020), 24 pp.

\bibitem{DahDieHar+}
S. Dahlke, L. Diening, C. Hartmann, B. Scharf and M. Weimar, \emph{Besov regularity of solutions to the {$p$}-{P}oisson equation}, Nonlinear Anal. \textbf{130} (2016), 298--329.

\bibitem{DekLev}
S. Dekel and D. Leviatan, \emph{Whitney estimates for convex domains with applications to multivariate piecewise polynomial approximation}, Found. Comput. Math. \textbf{4} (2004), 345--368.

\bibitem{Dor1}
J.R. Dorronsoro, \emph{Poisson integrals of regular functions}, Trans. Amer. Math. Soc. \textbf{297} (1986), 669--685.

\bibitem{Dri1}
D. Drihem, \emph{Characterizations of Besov-type and Triebel-Lizorkin-type spaces by differences}, J. Funct. Spaces Appl. 2012, Article ID 328908, 24 pp.

\bibitem{DeSh}
R.A. DeVore and R.C. Sharpley, \emph{Maximal functions measuring smoothness}, Mem. Amer. Math. Soc. \textbf{47} (1984), no. 293, 115 pp.

\bibitem{GoHaSkr}
H.F. Goncalves, D.D. Haroske and L. Skrzypczak, \emph{Limiting embeddings of Besov-type and Triebel-Lizorkin-type spaces on domains and an extension operator}, to appear in: Ann. Mat. Pura Appl. (2023+), 36 pp.

\bibitem{HaMoSk}
D.D. Haroske, S.D. Moura and L. Skrzypczak, \emph{Smoothness Morrey spaces of regular distributions, and some unboundedness property}, Nonlinear Anal. \textbf{139} (2016), 218--244.

\bibitem{HaMoSk2}
D.D. Haroske, S.D. Moura and L. Skrzypczak, \emph{Some embeddings of Morrey spaces with critical smoothness}, J. Fourier Anal. Appl. \textbf{26}, Art. 50 (2020), 31 pp.

\bibitem{HarSchSkr18}
D.D. Haroske, C. Schneider and L. Skrzypczak, \emph{Morrey spaces on domains: different approaches and growth envelopes}. J. Geom. Anal. \textbf{28} (2018), 817--841.

\bibitem{HarSkrNuc}
D.D. Haroske and L. Skrzypczak, \emph{Nuclear embeddings of Morrey sequence spaces and smoothness Morrey spaces}, 
Preprint: arXiv:2211.02594v1 (2022), 23 pp.

\bibitem{HN}
L.I. Hedberg and Y.V. Netrusov, \emph{An axiomatic approach to function spaces, spectral synthesis, and Luzin approximation}, Mem. Amer. Math. Soc. \textbf{188} (2007), no. 882, 97 pp.

\bibitem{HoN}
M. Hovemann, \emph{Besov-Morrey spaces and differences}, Math. Rep. (Bucur.) \textbf{23}(73) (2021), 175--192.

\bibitem{H21}
M. Hovemann, \emph{Smoothness Morrey Spaces and Differences: Characterizations and Applications}. PhD thesis, FSU Jena, 2021.

\bibitem{Ho1}
M. Hovemann, \emph{Triebel-Lizorkin-Morrey spaces and differences}. Math. Nachr. \textbf{295}(4) (2022), 725--761.

\bibitem{HoSi20}
M. Hovemann and W. Sickel, \emph{Besov-type spaces and differences}, Eurasian Math. J. \textbf{13}(1) (2020), 25--56.

\bibitem{LiYYSaU}
Y. Liang, D. Yang, W. Yuan, Y. Sawano and T. Ullrich, \emph{A new framework for generalized Besov-type and Triebel-Lizorkin-type spaces}, Dissertationes Math. \textbf{489} (2013), 114 pp.

\bibitem{Liz1}
P.I. Lizorkin, \emph{Operators connected with fractional derivatives and classes of differentiable functions}, Trudy Mat. Inst. Steklov \textbf{117} (1972), 212--243.

\bibitem{Liz2}
P. I. Lizorkin, \emph{Properties of functions of the spaces $ \Lambda^{r}_{p,\theta}$}, Trudy Mat. Inst. Steklov  \textbf{131} (1974), 158--181.

\bibitem{maz}
A.L. Mazzucato, \emph{Decomposition of Besov-Morrey spaces}, in: W. Beckner, A. Nagel, A. Seeger and H.F. Smith (eds.), \emph{Harmonic Analysis at Mount Holyoke (South Hadley, MA, 2001)}, Contemp. Math. 320, Amer. Math. Soc., Providence, RI, 2003, 279--294.

\bibitem{Mor}
C.B. Morrey, \emph{On the solutions of quasi-linear elliptic partial differential equations},  Trans. Amer. Math. Soc. \textbf{43} (1937), 126--166.

\bibitem{Ry99}
V.S. Rychkov, \emph{On restrictions and extensions of the Besov and Triebel-Lizorkin spaces with respect to Lipschitz domains}, J. London Math. Soc. (2) \textbf{60} (1999), 237--257.

\bibitem{Saw10}
Y. Sawano, \emph{Besov-Morrey spaces and Triebel-Lizorkin-Morrey spaces on domains}, Math. Nachr. \textbf{283} (2010), 1456--1487.

\bibitem{See1}
A. Seeger, \emph{A note on Triebel-Lizorkin spaces}, in: Z. Ciesielski, (ed.), \emph{Approximation and Function Spaces}, Banach Center Publ. 22, PWN---Polish Scientific Publishers, Warsaw, 1989, 391--400.

\bibitem{Shv06}
P. Shvartsman, \emph{Local approximations and intrinsic characterization of spaces of smooth functions on regular subsets of {${\mathbb{R}}^n$}}, Math. Nachr. \textbf{279} (2006), 1212--1241.

\bibitem{Si1}
W. Sickel, \emph{Smoothness spaces related to Morrey spaces - A survey. I}, Eurasian Math. J. \textbf{3} (2012), 110--149.

\bibitem{Stein}
E.M. Stein, \emph{Singular Integrals and Differentiability Properties of Functions}. Princeton University Press, Princeton, 1970.

\bibitem{TangXu}
L. Tang and J. Xu, \emph{Some properties of Morrey type Besov-Triebel spaces}, Math. Nachr. \textbf{278} (2005), 904--917. 

\bibitem{Tr73}
H. Triebel, \emph{Spaces of distributions of Besov type on Euclidean $n$-space. Duality, interpolation}, Ark. Mat. \textbf{11} (1973), 13--64.

\bibitem{Tr83}
H. Triebel, \emph{Theory of Function Spaces}, Birkh{\"a}user, Basel, 1983.

\bibitem{Tr89}
H. Triebel, \emph{Local approximation spaces}, Z. Anal. Anwendungen \textbf{8}(3) (1989), 261--288.

\bibitem{Tr92}
H. Triebel, \emph{Theory of Function Spaces II}, Birkh{\"a}user, Basel, 1992.

\bibitem{Tr06} 
H. Triebel, \emph{Theory of Function Spaces III}, Birkh\"auser, Basel, 2006.

\bibitem{Tr14} 
H. Triebel, \emph{Hybrid Function Spaces, Heat and Navier-Stokes Equations}, EMS Tracts in Mathematics, vol. 24, European Mathematical Society, Z\"urich, 2014.

\bibitem{Yab}
K. Yabuta, \emph{Singular integral operators on Triebel-Lizorkin spaces}, Bull. Fac. Sci. Ibaraki Univ. Ser. A \textbf{20} (1988), 9--17.

\bibitem{yy1}
D.~Yang and W.~Yuan, \emph{A new class of function spaces connecting Triebel-Lizorkin spaces and $Q$ spaces}, J. Funct. Anal. \textbf{255} (2008), 2760--2809.

\bibitem{yy2}
D.~Yang and W.~Yuan, \emph{New Besov-type spaces and Triebel-Lizorkin-type spaces including $Q$ spaces}, Math. Z. \textbf{265}(2) (2010), 451--480.

\bibitem{Yao}
L. Yao, \emph{Some intrinsic characterizations of Besov-Triebel-Lizorkin-Morrey-type spaces on Lipschitz domains}. J. Fourier Anal. Appl. \textbf{29}, Art. 24 (2023), 21 pp.

\bibitem{ysy} 
W.~Yuan, W.~Sickel and D.~Yang, \emph{Morrey and Campanato Meet Besov, Lizorkin and Triebel}, Lecture Notes in Mathematics Vol. 2005, Springer, Berlin, 2010.

\bibitem{YSY2}
W. Yuan, W. Sickel and D. Yang, \emph{Interpolation of Morrey-Campanato and related smoothness spaces}, Sci. China Math \textbf{58}(9) (2015), 1835--1908.

\bibitem{ZHS}
C. Zhuo, M. Hovemann and W. Sickel, \emph{Complex interpolation of Lizorkin-Triebel-Morrey spaces on domains}, Anal. Geom. Metr. Spaces \textbf{8} (2020), 268-304.

\end{thebibliography}
\end{document}